\newcommand{\bt}{\begin{theorem}}
\newcommand{\et}{\end{theorem}}
\newcommand{\bl}{\begin{lemma}}
\newcommand{\el}{\end{lemma}}
\newcommand{\bd}{\begin{definition}}
\newcommand{\ed}{\end{definition}}
\newcommand{\bc}{\begin{corollary}}
\newcommand{\ec}{\end{corollary}}
\newcommand{\bp}{\begin{proof}}
\newcommand{\ep}{\end{proof}}
\newcommand{\bx}{\begin{example}}
\newcommand{\ex}{\end{example}}
\newcommand{\bi}{\begin{exercise}}
\newcommand{\ei}{\end{exercise}}
\newcommand{\bo}{\begin{prop}}
\newcommand{\eo}{\end{prop}}
\newcommand{\br}{\begin{remark}}
\newcommand{\er}{\end{remark}}
\newcommand{\be}{\begin{equation}}
\newcommand{\ee}{\end{equation}}
\newcommand{\ba}{\begin{align}}
\newcommand{\ea}{\end{align}}
\newcommand{\bn}{\begin{enumerate}}
\newcommand{\en}{\end{enumerate}}
\newcommand{\bg}{\begin{align*}}
\newcommand{\bcs}{\begin{cases}}
\newcommand{\ecs}{\end{cases}}
\newcommand{\bean}{\begin{eqnarray*}}
\newcommand{\eean}{\end{eqnarray*}}
\newtheorem{definition}{Definition}[section]
\newtheorem{theorem}{Theorem}[section]
\newtheorem{lemma}{Lemma}[section]
\newtheorem{prop}{Proposition}[section]
\newtheorem{remark}{Remark}[section]
\numberwithin{equation}{section}
\begin{document}
\title[Fractional heat equation with critical exponent]{Infinite time blow-up for the fractional heat equation with critical exponent}

\author[M. Musso]{Monica Musso}
\address{\noindent
University of Bath,  North Rd., Bath BA2 7AY, UK}
\email{M.Musso@bath.ac.uk}

\author[Y. Sire]{Yannick Sire}
\address{\noindent
Department of Mathematics, Krieger Hall
Johns Hopkins University, Baltimore, MD, USA, 21218}
\email{sire@math.jhu.edu}

\author[J. Wei]{Juncheng Wei}
\address{\noindent
Department of Mathematics,
University of British Columbia, Vancouver, B.C., Canada, V6T 1Z2}
\email{jcwei@math.ubc.ca}

\author[Y. Zheng]{Youquan Zheng}
\address{\noindent School of Mathematics, Tianjin University, Tianjin 300072, P. R. China}
\email{zhengyq@tju.edu.cn}

\author[Y.Zhou]{Yifu Zhou}
\address{\noindent
Department of Mathematics,
University of British Columbia, Vancouver, B.C., Canada, V6T 1Z2}
\email{yfzhou@math.ubc.ca}

\begin{abstract}
We consider positive solutions for the fractional heat equation with critical exponent
\begin{equation*}
\begin{cases}
u_t = -(-\Delta)^{s}u + u^{\frac{n+2s}{n-2s}}\text{ in } \Omega\times (0, \infty),\\
 u = 0\text{ on } (\mathbb{R}^n\setminus \Omega)\times (0, \infty),\\
 u(\cdot, 0) = u_0\text{ in }\mathbb{R}^n,
  \end{cases}
\end{equation*}
where $\Omega$ is a smooth bounded domain in $\mathbb{R}^n$, $n > 4s$, $s\in (0, 1)$, $u:\mathbb{R}^n\times [0, \infty)\to \mathbb{R}$ and $u_0$ is a positive smooth initial datum with $u_0|_{\mathbb{R}^n\setminus \Omega} = 0$. We prove the existence of $u_0$ such that the solution blows up precisely at prescribed distinct points $q_1,\cdots, q_k$ in $\Omega$ as $t\to +\infty$.  The main ingredient of the proofs is a new inner-outer gluing scheme for the fractional parabolic  problems.
\end{abstract}
\maketitle
\section{Introduction}
Let $\Omega$ be a smooth bounded domain in $\mathbb{R}^n$, $n\geq 1$. We consider the fractional heat equation with critical exponent
\begin{equation}\label{e:main}
\begin{cases}
u_t = -(-\Delta)^{s}u + u^{\frac{n+2s}{n-2s}}\text{ in } \Omega\times (0, \infty),\\
 u = 0\text{ on } (\mathbb{R}^n\setminus \Omega)\times (0, \infty),\\
 u(\cdot, 0) = u_0 \text{ in }\mathbb{R}^n,
  \end{cases}
\end{equation}
for a function $u:\mathbb{R}^n\times [0, \infty)\to \mathbb{R}$ and a smooth, positive initial datum $u_0$ satisfying $u_0|_{\mathbb{R}^n\setminus \Omega} = 0$, $s\in (0, 1)$. Here, for any point $x \in \mathbb{R}^n$, the fractional Laplace operator $ (-\Delta)^su(x)$ is defined as
\begin{equation*}
  (-\Delta)^s u(x):= C(n,s) \mbox{P.V.}  \int_{\mathbb{R}^n} \frac{ u(x)- u(y)}{ |x-y|^{n+2s}} dy
  \end{equation*}
with a suitable positive normalizing constant $C(n,s)$. We refer to \cite{DPV} for an introduction to the fractional Laplace operator and to the appendix of \cite{DDDV} for a heuristic physical motivation in nonlocal quantum mechanics of the fractional operator considered here.

Parabolic problems like (\ref{e:main}) and related ones have attracted much attention in recent years, for example, \cite{Barriosvaldinoci2014armawidder}, \cite{BogdanTomaszRyznar2010}, \cite{caffarellichanvasseurJams2011}, \cite{caffrarellifigallijram2013}, \cite{caffarellisoriavazquezjems2013}, \cite{caffarellivazquesarma2011}, \cite{caffarellivasseur2010annals}, \cite{chenkimsongjems2010}, \cite{felsingerkassmancpde2013}, \cite{fernrosoton2016}, \cite{silvestreium2012differentiability}, \cite{silvestreium2012} and the references therein. As in the case of $s =1$, problem (\ref{e:main}) is the formal negative $L^2$-gradient flow of the functional $$E(u) = \frac{1}{2}\int_{\mathbb{R}^n}|(-\Delta)^{\frac{s}{2}}u|^2dx - \frac{n-2s}{2n}\int_{\Omega}|u|^{\frac{2n}{n-2s}}dx$$ in
\begin{equation*}
\begin{aligned}
H^s_0(\Omega) := \bigg\{u\in L^2(\mathbb{R}^n): \int_{\mathbb{R}^n}|(-\Delta)^{\frac{s}{2}}u|^2dx < +\infty &\text{ and } u = 0\\
&\text{ almost everywhere in } \mathbb{R}^n\setminus\Omega\bigg\},
\end{aligned}
\end{equation*}
i.e., $\frac{d}{dt}E(u(\cdot,t)) = -\int_{\mathbb{R}^n}|u_t|^2dx$.
If the function $u(x, t)$ is independent of $t$, (\ref{e:main}) is a semilinear elliptic problem with fractional Laplacian, which has been studied extensively, for instance, in \cite{choikimjfa2014asymtotic} and \cite{servadeivaldinoci2015brezis}.

When $s = 1$, problem (\ref{e:main}) is the classical parabolic equation with critical exponent
\begin{equation}\label{e:localcase}
\begin{cases}
u_t = \Delta u + u^{\frac{n+2}{n-2}}&\text{ in }\Omega\times (0, \infty),\\
u = 0&\text{ on }\partial \Omega\times (0, \infty),\\
u(\cdot, 0) = u_0&\text{ in }\Omega.
\end{cases}
\end{equation}
Many authors are interested in the blow-up phenomenon of (\ref{e:localcase}), for example, \cite{cortazar2016green}, \cite{delmussoweitype2}, \cite{delmussowei3d}, \cite{fujitajfsuts1966blowup}, \cite{matanomerlejfa2011threshold}, \cite{merlezaagduke1997stability}, \cite{quittnersouplet2007superlinear}, \cite{schweyerjfa2012typeii}.
In \cite{cortazar2016green}, Cortazar, del Pino and Musso obtained the following result.
Suppose $n>4$, let $\hat{G}(x, y)$ be the Green's function of $-\Delta$ in $\Omega$ with Dirichlet boundary value and $\hat{H}(x,y)$ be its regular part.
Given $k$ distinct points $q_1,\cdots, q_k$ in $\Omega$ such that the matrix
\begin{eqnarray*}
\hat{\mathcal{G}}(q) = \left[
\begin{matrix}
\hat{H}(q_1, q_1)&-\hat{G}(q_1, q_2)&\cdots & -\hat{G}(q_1, q_k)\\
-\hat{G}(q_2, q_1)&\hat{H}(q_2, q_2)&\cdots &-\hat{G}(q_2, q_k)\\
\vdots&\vdots&\ddots&\vdots\\
-\hat{G}(q_k,q_1)&-\hat{G}(q_k, q_2)&\cdots&\hat{H}(q_k, q_k)
\end{matrix}
\right]
\end{eqnarray*}
is positive definite, they proved the existence of an initial datum $u_0$ and smooth parameter functions $\xi_j(t)\to q_j$, $0<\mu_j(t)\to 0$, as $t\to +\infty$, $j = 1, \cdots, k$, such that there exists an infinite time blow-up solution $u_q$ of (\ref{e:localcase}) which has the approximate form
\begin{equation*}
u_q \approx\sum_{j= 1}^k\alpha_{n}\left(\frac{\mu_j(t)}{\mu_j^2(t) + |x-\xi_j(t)|^2}\right)^{\frac{n-2}{2}}
\end{equation*}
with $\mu_j(t) = \beta_jt^{-\frac{1}{n-4}}(1+o(1))$ for certain positive constants $\beta_j$.
The aim of this paper is to show that this phenomenon also occurs in problem (\ref{e:main}).
Our starting point is the positive entire solutions of the equation
\begin{equation*}
-(-\Delta)^sU + U^{\frac{n+2s}{n-2s}} = 0\text{ in }\mathbb{R}^n,
\end{equation*}
which are given by the bubbles
\begin{equation}\label{umuxi}
U_{\mu,\xi}(x) = \mu^{-\frac{n-2s}{2}}U_0\left(\frac{x-\xi}{\mu}\right) = \alpha_{n,s}\left(\frac{\mu}{\mu^2 + |x-\xi|^2}\right)^{\frac{n-2s}{2}},
\end{equation}
where
\begin{equation*}
U_0(y) = \alpha_{n,s}\left(\frac{1}{1+|y|^2}\right)^{\frac{n-2s}{2}}
\end{equation*}
and $\alpha_{n,s}$ is a constant depending only on $n$ and $s$, see, \cite{ChenLiOucpam2006classification} and \cite{Lijems2004remarkconformally}.
Let $G(x,y)$ be the Green's function for the following nonlocal problem
\begin{equation*}
\begin{cases}
(-\Delta)^sG(x,y) = c(n,s)\delta(x-y)&\text{ in }\Omega, \\
G(\cdot, y) = 0&\text{ in }\mathbb{R}^n\setminus\Omega,
\end{cases}
\end{equation*}
where $\delta(x)$ denotes the Dirac measure at the origin and $c(n,s)$ satisfies
\begin{equation*}
(-\Delta)^s\Gamma(x) = c(n,s)\delta(x),\quad \Gamma(x) = \frac{\alpha_{n,s}}{|x|^{n-2s}}.
\end{equation*}
The regular part of $G(x,y)$ is denoted by $H(x,y)$, namely $H(x,y)$ solves the following problem
\begin{equation*}
\begin{cases}
(-\Delta)^sH(x,y) = 0&\text{ in }\Omega,\\
H(\cdot, y) = \Gamma(\cdot - y)&\text{ in }\mathbb{R}^n\setminus\Omega.
\end{cases}
\end{equation*}
Let $q = (q_1,\cdots, q_k)$ be the collection of $k$ distinct points in $\Omega$ and define
\begin{eqnarray}\label{e:matrix}
\mathcal{G}(q) := \left[
\begin{matrix}
H(q_1, q_1)&-G(q_1, q_2)&\cdots & -G(q_1, q_k)\\
-G(q_2, q_1)&H(q_2, q_2)&\cdots &-G(q_2, q_k)\\
\vdots&\vdots&\ddots&\vdots\\
-G(q_k,q_1)&-G(q_k, q_2)&\cdots&H(q_k, q_k)
\end{matrix}
\right].
\end{eqnarray}
Our main result is stated as follows.
\begin{theorem}\label{t:main}
Assume $n>4s$, $s\in (0, 1)$ and $q_1,\cdots, q_k$ are distinct points in $\Omega$ such that the matrix (\ref{e:matrix}) is positive definite. Then there exist $u_0$ and smooth parameter functions $\xi_j(t)\to q_j$, $0<\mu_j(t)\to 0$, as $t\to +\infty$, $j = 1, \cdots, k$, such that there exists solution $u_q$ to problem (\ref{e:main}) with the form
\begin{equation*}
u_q =\sum_{j= 1}^k\alpha_{n,s}\left(\frac{\mu_j(t)}{\mu_j^2(t) + |x-\xi_j(t)|^2}\right)^{\frac{n-2s}{2}}-\mu_j^{\frac{n-2s}{2}}(t)H(x, q_j)+\mu_j^{\frac{n-2s}{2}}(t)\varphi(x, t),
\end{equation*}
where $\varphi(x, t)$ is bounded satisfying $\varphi(x, t)\to 0$ as $t\to +\infty$, uniformly away from $q_j$. Furthermore,
there exists a submanifold $\mathcal{M}$ with codimension $k$ in $X:=\{u\in C^1(\mathbb{R}^n):u|_{\mathbb{R}^n\setminus\Omega} = 0\}$ containing $u_q(x,0)$ such that, if $u_0$ is a small perturbation of $u_q(x,0)$ in $\mathcal{M}$, then the solution $u(x, t)$ of (\ref{e:main}) still has the form
\begin{equation*}
u(x,t) =\sum_{j= 1}^k\alpha_{n,s}\left(\frac{\tilde{\mu}_j(t)}{\tilde{\mu}_j^2(t) + |x-\tilde{\xi}_j(t)|^2}\right)^{\frac{n-2s}{2}}-\tilde{\mu}_j^{\frac{n-2s}{2}}(t)H(x, \tilde{q}_j)+\tilde{\mu}_j^{\frac{n-2s}{2}}(t)\tilde{\varphi}(x, t),
\end{equation*}
where the point $\tilde{q}_j$ is close to $q_j$ for $j = 1,\cdots,k$.
\end{theorem}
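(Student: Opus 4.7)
The plan is to adapt the inner--outer gluing scheme developed by Cortazar--del Pino--Musso \cite{cortazar2016green} in the local case to the fractional setting. My first approximation mirrors the profile appearing in the theorem statement,
\[
u_*(x,t) \;=\; \sum_{j=1}^k \Bigl( U_{\mu_j(t),\xi_j(t)}(x) - \mu_j(t)^{\frac{n-2s}{2}} H(x,q_j)\Bigr),
\]
where subtracting $\mu_j^{(n-2s)/2}H(\cdot,q_j)$ plays the nonlocal role of the boundary corrector: by the defining problem for $H$ it forces $u_*$ to vanish, at leading order, on $\R^n\setminus\Om$ in the fractional sense. The parameters $\mu_j(t)\to 0$ and $\xi_j(t)\to q_j$ are unknown and will be adjusted later. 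Plugging $u_*$ into \eqref{e:main} produces an error whose dominant contributions localize near each $q_j$ and combine the ODE forcings $\dot\mu_j,\dot\xi_j$ acting on the bubble with cross-interaction terms whose leading coefficients are precisely the entries of $\mathcal{G}(q)$.

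Writing $u = u_* + \phi$, I would then split
\[
\phi(x,t) \;=\; \sum_{j=1}^k \mu_j(t)^{-\frac{n-2s}{2}}\,\eta_j(x,t)\,\wphi_j\!\left(\tfrac{x-\xi_j(t)}{\mu_j(t)},\,\tau_j(t)\right) + \psi(x,t),
\]
with $\eta_j$ a smooth cut-off supported near $q_j$, $\tau_j$ a slow time defined by $d\tau_j/dt \sim \mu_j^{-2s}$, and $\psi$ a global outer remainder vanishing on $\R^n\setminus\Om$. Substituting into the equation and requiring cancellation on the inner and outer parts separately yields a coupled system: $k$ inner problems, each a perturbation of the stationary linearized bubble equation
\[
L_0 \wphi \;:=\; (-\Delta)^s \wphi - p\,U_0^{p-1}\wphi \;=\; h,\qquad p = \frac{n+2s}{n-2s},
\]
driven by the localized part of the error, together with one outer problem of the form $\psi_t + (-\Delta)^s\psi = g$ in $\Om\times(0,\infty)$ with homogeneous exterior data and small source $g$.

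The analytic heart of the scheme is an invertibility theory for $L_0$. Its bounded kernel is spanned by the dilation mode $Z_0 = \tfrac{n-2s}{2}U_0 + y\cdot\nabla U_0$ and the translations $Z_i=\pa_{y_i}U_0$. I would establish a quantitative estimate for $L_0 \wphi = h$, modulo projections on these $n+1$ modes, in weighted $L^\infty$ spaces with prescribed spatial decay. Requiring the projections to vanish produces the reduced equations for $(\mu_j,\xi_j)$: the translation modes decouple, at leading order, to a linear system for $\xi_j$ with coefficients built from $\nabla_x H(q_j,q_j)$ and $\nabla_x G(q_j,q_i)$, while the dilation modes yield an ODE system for $\mu=(\mu_1,\dots,\mu_k)$ whose leading-order forcing is the quadratic form associated to $\mathcal{G}(q)$. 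Positive definiteness of $\mathcal{G}(q)$ then provides an algebraically decaying solution $\mu_j(t)\to 0$, $\xi_j(t)\to q_j$, at a rate dictated by $n$ and $s$. The outer piece is treated via linear estimates for the fractional heat semigroup on $\Om$ with zero exterior data in weighted norms compatible with the inner spatial decay, and a contraction-mapping argument on the product unknown $(\wphi_j,\psi,\mu_j,\xi_j)$ finishes the construction.

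The principal obstacle is the nonlocal character of the linear theory. Unlike the Laplacian, the operator $(-\Delta)^s$ applied to $U_{\mu,\xi}-\mu^{(n-2s)/2}H(\cdot,q)$ does not separate cleanly into interior and exterior pieces, so both the computation of the ansatz error and the linear analysis of the outer problem must be carried out through the singular-integral representation of $(-\Delta)^s$, tracking long-range tails uniformly as $\mu_j\to 0$. The second delicate point is lifting the stationary invertibility of $L_0$ to a parabolic estimate for $\mu_j^{2s}\pa_\tau + L_0$ with sharp pointwise decay in space and time; this is available in the local case through explicit heat-kernel representations that are not as convenient here, and I expect it to require a barrier/blow-up argument combined with fractional heat-kernel bounds. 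Once these nonlocal ingredients are in place, the codimension-$k$ manifold $\M$ arises in the usual way: the $k$ unstable directions of the parameter dynamics are the scaling modes attached to each bubble, so $\M$ is the stable manifold complementary to them, and perturbations of $u_q(\cdot,0)$ along $\M$ produce solutions of the same profile with shifted parameters $(\ti q_j,\ti\mu_j,\ti\xi_j)$.
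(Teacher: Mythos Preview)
Your overall architecture is the one the paper uses, and you correctly anticipate that the inner parabolic linear theory cannot be done via explicit kernels and needs a blow-up/compactness argument. But there is one genuine fractional-specific gap in the ansatz, and one misidentification in the stability discussion.

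\textbf{The missing nonlocal correctors.} Your first approximation $u_*=\sum_j (U_{\mu_j,\xi_j}-\mu_j^{(n-2s)/2}H(\cdot,q_j))$ is exactly what works when $s=1$, but in the fractional case it is not good enough. Computing $S(u_*)$ near $q_j$ produces the time-derivative terms $\mu_j^{-\frac{n-2s}{2}-1}\dot\mu_j Z_{n+1}(y_j)$ and $\mu_j^{-\frac{n-2s}{2}-1}\dot\xi_j\cdot\nabla U(y_j)$, and $Z_{n+1}(y)\sim |y|^{-(n-2s)}$ at infinity. The inner linear theory you outline (solve $L_0\wphi=h$ modulo the kernel, in weighted $L^\infty$) requires, for the blow-up argument to close, that the source decay like $(1+|y|)^{-(2s+a)}$ with $a>2s$; otherwise the orthogonality integrals $\int\phi Z_j$ are not even finite and the Liouville step fails. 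For $4s<n$ the decay $|y|^{-(n-2s)}$ is in general strictly slower than $|y|^{-4s}$, so this term cannot be fed directly into the inner problem. The paper fixes this by inserting, already in the ansatz, two additional \emph{nonlocal} correctors $\Phi^0_j,\Phi^1_j$ obtained by solving auxiliary fractional heat equations on $\R^n$ driven by the slow-decay tails of $Z_{n+1}$ and $\nabla U$; after this the residual time-derivative contribution becomes $Z_{n+1}(y)+\tfrac{n-2s}{2}\alpha_{n,s}(1+|y|^2)^{-(n-2s)/2}=O(|y|^{-(n-2s+2)})$, which is fast enough. These correctors also feed back into the reduced ODE for $\mu_j$ through the constant $\Phi^0_j(q_j,t)$, so they change the leading-order balance. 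Without them your scheme will not close for general $n>4s$.

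\textbf{The source of the codimension.} The $k$ unstable directions are not the scaling modes $Z_{n+1}$; those sit in the kernel of $L_0$ and are handled by the orthogonality conditions that determine $\mu_j$. The instability comes from the unique \emph{negative} eigenvalue $\lambda_0$ of $L_0$ with eigenfunction $Z_0$ (distinct from $Z_{n+1}$): for each $j$ the inner parabolic problem has an exponentially growing component along $Z_0$, and one must choose a scalar $e_{0j}$ in the initial datum $\phi_j(\cdot,\tau_0)=e_{0j}Z_0$ to suppress it. Those $k$ scalars are what carve out the codimension-$k$ manifold $\mathcal{M}$.
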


In order to prove this theorem, we shall develop a new   {\it inner-outer gluing scheme}  for fractional parabolic problems.  It is well-known that gluing methods have been proven to be  very useful in singular perturbation elliptic problems, for example, \cite{dkw2007concentration}, \cite{delwei2011Degiorgi}, \cite{delkowalczykweijdg2013entire}. This method has also been applied in various parabolic flows recently, such as the infinite time blow-up for critical nonlinear heat equation \cite{cortazar2016green}, \cite{delmussowei3d} and half-harmonic map flow \cite{sireweizhenghalf}, the singularity formation for two dimensional harmonic map flow \cite{davila2017singularity}, finite time blow-up for critical nonlinear heat equation \cite{delmussoweitype2}, type II ancient solution for Yamabe flow \cite{del2012type}. 

When dealing with parabolic problems, a crucial step in the scheme is to find a solution of the linearized parabolic equation around the bubble with sufficiently fast decay. However, it seems that the local argument in \cite{cortazar2016green} for the classical critical heat equation does not work in the fractional case. Inspired by Lemma 4.5 of \cite{davila2017singularity} and the linear theory of \cite{sireweizhenghalf}, we will use a blow-up argument based on the nondegeneracy of bubbles and a removable singularity property for the corresponding limit equations. (See Section 5 below.)

As mentioned, the case of fractional parabolic problems is much more intricate. For the semilinear equation, Sugitani \cite{sugitani} proved non-existence below the Fujita exponent $p_*=1+\frac{2s}{n}$. The case of global existence above the exponent remains open since all the known techniques fail  in this case. Related to a similar question, the paper \cite{ishige} provides an optimal initial trace theory (see also \cite{BSV} for the case of the homogeneous fractional heat equation). As far as blow-up is concerned, a theory in the spirit of the one developed for instance by Giga and Kohn \cite{GK} , is missing. A crucial step in these approaches is to exhibit a monotone quantity. For the operator $\partial_t +(-\Delta)^s$, such a quantity is missing. On the other hand, as far as nonlocal operators are concerned, the fractional power of the heat equation, i.e. $(\partial_t -\Delta)^s$, on the other hand exhibits monotonicity (see for instance \cite{BG}). Notice that the latter operator has the same stationary solutions as the former one.  

The proof of Theorem \ref{t:main} is rather long. We outline the proof and point out key arguments here. To explain the idea, we assume that $k = 1$ in the rest of this section.

\noindent
{\bf Step 1. Construction of approximation}.
Our aim is to find a solution $u(x, t)$ in the following approximate form
\begin{equation*}
u(x, t) \approx U_{\mu(t), \xi(t)}(x)
\end{equation*}
with $\xi(t)\to q$, $\mu(t)\to 0$ as $t\to\infty$ and $U_{\mu(t), \xi(t)}(x)$ is defined in (\ref{umuxi}). Denote the error operator as
\begin{equation*}
S(u):= -u_t-(-\Delta)^su + u^p,
\end{equation*}
where $p = \frac{n+2s}{n-2s}$ . Then the error of $U_{\mu(t), \xi(t)}(x)$ is
\begin{eqnarray*}
\begin{aligned}
S(U_{\mu(t), \xi(t)}) &=  \mu^{-\frac{n-2s}{2}-1}\dot{\mu} Z_{n+1}(y)  + \mu^{-\frac{n-2s}{2}-1}\dot{\xi}\cdot \nabla U(y).
\end{aligned}
\end{eqnarray*}
Here $y = \frac{x-\xi(t)}{\mu(t)}$. It turns out that the terms $\mu^{-\frac{n-2s}{2}-1}\dot{\mu} Z_{n+1}(y)$ and $\mu^{-\frac{n-2s}{2}-1}\dot{\xi}\cdot \nabla U(y)$ do not have enough decay to perform the gluing method we shall use. (For $s=1$, this is enough.)  So we add a nonlocal term $\Phi^*(x, t) = \Phi^0(x, t) + \Phi^1(x, t)$ to cancel them out at main order. Since $u = 0$ in $\mathbb{R}^n\setminus\Omega$, a better approximation than $U_{\mu(t), \xi(t)}(x)$ is
\begin{equation*}
u_{\mu,\xi}(x, t) = U_{\mu,\xi}(x) + \mu^{\frac{n-2s}{2}}\Phi^*(x, t) -  \mu^{\frac{n-2s}{2}} H(x, q).
\end{equation*}
The error of $u_{\mu,\xi}$ can be computed as
\begin{equation*}
\mu^{\frac{n+2s}{2}}S(u_{\mu, \xi}) \approx \mu E_{0} + \mu E_{1}
\end{equation*}
with
\begin{equation*}
\begin{aligned}
E_{0}&= pU(y)^{p-1}\left(-\mu^{n-2s-1}H(q, q)\right)+ pU(y)^{p-1}\mu^{n-2s-1}\Phi^0(q, t)\\
& \quad +\mu^{2s-2}\dot{\mu}\left(Z_{n+1}(y)
+ \frac{n-2s}{2}\alpha_{n, s}\frac{1}{\left(1+|y|^2\right)^{\frac{n-2s}{2}}}\right)
\end{aligned}
\end{equation*}
and
\begin{equation*}
\begin{aligned}
E_{1} = pU(y)^{p-1}\left(-\mu^{n-2s}\nabla H(q, q)\right)\cdot y &+ pU(y)^{p-1}\mu^{n-2s-1}\Phi^1(q, t)\\
&+ \alpha_{n, s}(n-2s)\mu^{2s-2}\frac{\dot{\xi}\cdot y}{\left(1+|y|^2\right)^{\frac{n-2s}{2}+1}}.
\end{aligned}
\end{equation*}
We shall look for solutions with the form
\begin{equation*}
u(x, t) = u_{\mu, \xi} + \tilde{\phi}(x, t),
\end{equation*}
where
\begin{equation*}
\tilde{\phi}(x, t) = \mu^{-\frac{n-2s}{2}}\phi\left(\frac{x-\xi}{\mu}, t\right).
\end{equation*}
By $S(u)=S(u_{\mu, \xi} + \tilde{\phi}(x, t))=0$, the equation for $\phi(y, t)$ is
\begin{equation*}
-(-\Delta)^s_y\phi + pU(y)^{p-1}\phi + \mu^{\frac{n+2s}{2}}S(u_{\mu, \xi}) + A[\phi]=0
\end{equation*}
for a small term $A[\phi]$. Note that in the expansion of $\mu^{\frac{n+2s}{2}}S(u_{\mu, \xi})$, the largest term is $\mu E_0$, so $\phi(y, t)$ should equal a solution $\phi_{0}(y, t)$ of the following elliptic type equation at main order
\begin{equation}\label{e0:elliptic}
-(-\Delta)^s_y\phi_{0} + pU(y)^{p-1}\phi_{0} = -\mu_{0}E_{0}\text{ in }\mathbb{R}^n,~~\phi_0(y, t)\to 0\text{ as }|y|\to \infty.
\end{equation}
Equation (\ref{e0:elliptic}) is a special case of
\begin{equation}\label{e2:32111}
L_0[\psi]:= -(-\Delta)^s_y\psi + pU(y)^{p-1}\psi = h(y)\text{ in }\mathbb{R}^n,~~\psi(y)\to 0\text{ as }|y|\to \infty.
\end{equation}
It is well known that every bounded solution of $L_0[\psi] = 0$ in $\mathbb{R}^n$ is the linear combination of the functions
$$Z_1,\cdots, Z_{n+1}$$
where
\begin{equation*}
Z_i(y):= \frac{\partial U}{\partial y_i}(y),\quad i = 1,\cdots, n, \quad Z_{n+1}(y):=\frac{n-2s}{2}U(y) + y\cdot\nabla U(y).
\end{equation*}
The above non-degeneracy result can be found in \cite{DelPinoSirePAMS}.
Furthermore, problem (\ref{e2:32111}) is solvable for $h(y) = O(|y|^{-m})$, $m > 2s$, if it holds that
\begin{equation*}
\int_{\mathbb{R}^n}h(y)Z_i(y)dy = 0\quad\text{for}\quad i = 1,\cdots, n+1
\end{equation*}
By choosing $\bar{\mu}_{0} = b\mu_0(t)$ for suitable positive constant $b$ and $\xi_0 = q$, the solvability conditions
\begin{equation}\label{e0:solvability}
\int_{\mathbb{R}^n}\mu_{0}E_{0}(y, t)Z_{i}(y)dy = 0,\quad i = 1,\cdots, n+1
\end{equation}
can be achieved at main order. Here $\mu_0(t) = c_{n, s}t^{-\frac{1}{n-4s}}$ for some constant $c_{n, s}$.
Under the solvability condition (\ref{e0:solvability}), (\ref{e0:elliptic}) has a solution $\Phi(y, t)$, which leads to the following corrected approximation
\begin{equation*}
u^*_{\mu, \xi}(x, t) = u_{\mu, \xi}(x, t) + \tilde{\Phi}(x, t),
\end{equation*}
where
\begin{equation*}
\tilde{\Phi}(x, t) = \mu^{-\frac{n-2s}{2}}\Phi\left(\frac{x-\xi}{\mu}, t\right)
\end{equation*}
and $\mu(t) = b\mu_0(t) + \lambda(t)$.
Finally, we use the ansatz
\begin{equation*}
u = u^*_{\mu,\xi} + \tilde{\phi}.
\end{equation*}
We shall show the details and the general case $k\geq 1$ in Section 2.

\medskip

\noindent
{\bf Step 2. The inner-outer gluing procedure}.
Denote
\begin{equation*}
\tilde{\phi}(x, t) = \psi(x, t) + \phi^{in}(x, t),\quad\text{where}\quad\phi^{in}(x, t): = \eta_{R}\tilde{\phi}(x, t)
\end{equation*}
with
\begin{equation*}
\tilde{\phi}(x, t): = (b\mu_{0})^{-\frac{n-2s}{2}}\phi\left(\frac{x-\xi}{b\mu_{0}}, t\right)
\end{equation*}
and
\begin{equation*}
\eta_{R}(x, t) = \eta\left(\frac{|x-\xi|}{R\mu_{0}}\right).
\end{equation*}
The cut-off function $\eta(\tau)$ satisfies $\eta(\tau) = 1$ for $0\leq \tau < 1$ and $\eta(\tau)= 0$ for $\tau > 2$. The number $R$ is independent of $t$ and fixed sufficiently large. In terms of $\tilde{\phi}$, problem (\ref{e:main}) can be expressed as
\begin{equation}\label{0417}
\begin{cases}
\partial_t\tilde{\phi} = -(-\Delta)^{s}\phi + p(u^*_{\mu, \xi})^{p-1}\tilde{\phi} + \tilde{N}(\tilde{\phi}) + S(\mu^*_{\mu, \xi}), &\quad \text{ in }\Omega\times (t_0, \infty),\\
\tilde{\phi} = -u^*_{\mu, \xi}, &\quad \text{ in }(\mathbb{R}^n\setminus \Omega)\times (t_0, \infty).
\end{cases}
\end{equation}
Let
\begin{equation*}
V_{\mu, \xi} = p\left((u^*_{\mu, \xi})^{p-1} - \left[\mu^{-\frac{n-2s}{2}}U\left(\frac{x-\xi}{\mu}\right)\right]^{p-1}\right)\eta_{R} + p(1-\eta_{R})(u^*_{\mu, \xi})^{p-1}.
\end{equation*}
Then $\tilde{\phi}$ solves problem (\ref{0417}) if $\psi$ and $\phi$ satisfy the following two coupled equations respectively,
\begin{equation}\label{e0:outerproblem}
\begin{cases}
\partial_t\psi =
-(-\Delta)^{s}\psi + V_{\mu, \xi}\psi+\tilde{\phi}(-(-\Delta)^{s})\eta_{R} + \cdots,&\quad\text{ in }\Omega\times (t_0, \infty),\\
\psi = -u^*_{\mu, \xi}, &\quad\text{ in }(\mathbb{R}^n\setminus\Omega)\times (t_0,\infty)
\end{cases}
\end{equation}
and
\begin{equation}\label{0417'}
\begin{aligned}
\mu_{0}^{2s}\partial_t\phi = &-(-\Delta)^s_y\phi + pU^{p-1}(y)\phi\\ &+ \Bigg\{p\mu_{0}^{\frac{n-2s}{2}}\frac{\mu_{0}^{2s}}{\mu^{2s}}U^{p-1}\left(\frac{\mu_{0}}{\mu}y\right)\psi(\xi + \mu_{0}y, t)+\cdots \Bigg\}\chi_{B_{2R}(0)}(y), y\in \mathbb{R}^n.
\end{aligned}
\end{equation}
\eqref{e0:outerproblem} is the so-called $outer~problem$ and \eqref{0417'} is the $inner~ problem$.  Note that the inner problem is solved in the {\em whole space} with error supported in $B_{2R} (0)$. See Section 3 for details.

\noindent
{\bf Step 3. The outer problem}. For a fixed $a > 2s$, we solve the outer problem (\ref{e0:outerproblem}) for $\psi$ under the initial condition $\psi(\cdot,t_0) = \psi_0$ in $\mathbb{R}^n$. Suppose
\begin{equation}\label{e0:decay}
(1+|y|)|\nabla_y \phi(y, t)|\chi_{B_{2R}(0)}(y) + |\phi(y, t)|\lesssim t_0^{-\varepsilon}\frac{\mu_0^{n-2s+\sigma}(t)}{1+|y|^a}
\end{equation}
holds for a small constant $\sigma >0$ and small $\varepsilon > 0$. Using the super-sub solution method, we solve (\ref{e0:outerproblem})
and obtain the existence of a unique solution $\psi = \Psi[\lambda, \xi, \dot{\lambda}, \dot{\xi}, \phi]$ satisfying
\begin{equation*}
|\psi(x, t)|\lesssim \frac{t_0^{-\varepsilon}}{R^{a-2s}}\frac{\mu_0^{\frac{n-2s}{2}+\sigma}(t)}{1+|y|^{a-2s}} + e^{-\delta(t-t_0)}\|\psi_0\|_{L^{\infty}(\mathbb{R}^n)}
\end{equation*}
and
\begin{equation*}
[\psi(x, t)]_{\eta, B_{\mu R}(\xi)}\lesssim \frac{t_0^{-\varepsilon}}{R^{a-2s}}\frac{\mu^{-\eta}\mu_0^{\frac{n-2s}{2}+\sigma}(t)}{1+|y|^{a-2s+\eta}}\text{ for } |y|\leq 2 R,
\end{equation*}
where  $y=\frac{x-\xi}{\mu_{0}}$.
This is the content of Section 4.

After substituting $\psi = \Psi[\lambda,\xi,\dot{\lambda},\dot{\xi},\phi]$ into the inner problem (\ref{0417'}) and using the change of variables $\frac{dt}{d\tau}=\mu_0^{2s}(t)$, the full problem is reduced to the solvability of the following nonlinear nonlocal equation
\begin{equation}\label{e05:6}
\begin{cases}
\partial_\tau\phi = -(-\Delta)^s_y\phi + pU^{p-1}(y)\phi + H[\lambda,\xi,\dot{\lambda},\dot{\xi},\phi](y,t(\tau)), &\quad y\in \mathbb{R}^n, \tau\geq \tau_0,\\
\phi(y,\tau_0) = e_{0}Z_0(y), &\quad  y\in \mathbb{R}^n,
\end{cases}
\end{equation}
for some constant $e_{0}$, and $Z_0$ is the bounded eigenfunction corresponding to the only negative eigenvalue $\lambda_0$ to the following eigenvalue problem
$$-(-\Delta)^s \phi+pU^{p-1}\phi+\lambda\phi=0,~\phi\in L^{\infty}(\mathbb{R}^n).$$

\noindent
{\bf Step 4. Linear theory for (\ref{e05:6})}. To solve the problem (\ref{e05:6}), we first consider the following linear parabolic problem
\begin{equation}\label{0417''}
\begin{cases}
\partial_\tau\phi = -(-\Delta)^s\phi + pU^{p-1}(y)\phi + h(y,\tau), &\quad y\in \mathbb{R}^n,~ \tau\geq \tau_0,\\
\phi(y,\tau_0) = e_{0}Z_0(y), &\quad y\in \mathbb{R}^n.
\end{cases}
\end{equation}
Assuming $h(\cdot, \tau)$ is supported in $B_{2R}(0)$ for any $\tau \geq \tau_0$, $\|h\|_{2s+a,\nu, \eta} < +\infty$ and
\begin{equation*}
\int_{B_{2R}(0)}h(y,\tau)Z_j(y)dy = 0\quad\text{for all}\quad\tau\in (\tau_0,\infty),\quad j = 1,\cdots, n+1,
\end{equation*}
we prove the existence of a fast-decaying solution $\phi = \phi[h](y, \tau)$ and $e_0 = e_0[h](\tau)$ ($\tau\in (\tau_0,+\infty),y\in \mathbb{R}^n$) solving problem (\ref{0417''}). In addition, the following estimates hold,
\begin{equation*}\label{e5:100}
\begin{aligned}
(1+|y|)|\nabla_y \phi(y, \tau)|\chi_{B_{2R}(0)}(y)&+ |\phi(y,\tau)|\\
&\lesssim \tau^{-\nu}(1+|y|)^{-a}\|h\|_{2s+a,\nu, \eta}, \tau\in (\tau_0,+\infty),y\in \mathbb{R}^n
\end{aligned}
\end{equation*}
and
\begin{equation*}\label{e5:101}
|e_0[h]|\lesssim \|h\|_{2s+a,\nu, \eta}.
\end{equation*}
It seems that the linear theory in \cite{cortazar2016green} does not work in the fractional case, instead, we will use the blow up argument similar to  \cite{davila2017singularity}.  Here we need the technical assumption $a > 2s$ to ensure the integrability. This is the reason why we add two nonlocal terms in {\bf Step 1} and there is a term $\frac{1}{R^{a-2s}}$ in the estimation of $\psi$, see Section 5.1.

\noindent
{\bf Step 5. The solvability condition for (\ref{e05:6})}.
From {\bf Step 4}, we see that problem (\ref{e05:6}) is solvable for functions $\phi$ satisfying (\ref{e0:decay}), provided $\xi$ and $\lambda$ are chosen such that
\begin{equation*}
\int_{B_{2R}}H[\lambda,\xi,\dot{\lambda},\dot{\xi},\phi](y,t(\tau))Z_l(y)dy = 0,\,\, \text{for all }\tau\geq \tau_0, l = 1,2,\cdots,n+1.
\end{equation*}
By the orthogonality conditions above, our original problem is reduced to a nonlinear nonlocal system of ODEs for $\lambda$ and $\xi$, which is achieved in Section 5.2.

\noindent
{\bf Step 6. The inner problem: gluing}. We finally solve the nonlinear nonlocal problem (\ref{e05:6}) based on the linear theory for (\ref{0417''}) and the Contraction Mapping Theorem. See Section 6 for details.

\section{Construction of the approximation}
\subsection{Setting up the problem.}
Let $t_0 > 0$. We consider the following evolution problem
\begin{equation}\label{e3:1}
\begin{cases}
u_t = -(-\Delta)^{s}u + u^{\frac{n+2s}{n-2s}}&\text{  in  }\Omega\times (t_0, \infty),\\
u = 0&\text{  in  }(\mathbb{R}^n\setminus \Omega)\times (t_0, \infty),
\end{cases}
\end{equation}
which provides a solution $u(x, t) = u(x, t-t_0)$ to (\ref{e:main}). Given $k$ points $q_1, \cdots, q_k\in\mathbb{R}^n$, our aim is to find a solution $u(x, t)$ of (\ref{e3:1}) in the following approximate form
\begin{equation*}
u(x, t) \approx \sum_{j=1}^kU_{\mu_j(t), \xi_j(t)}(x)
\end{equation*}
with $\xi_j(t)\to q_j$, $\mu_j(t)\to 0$ as $t\to\infty$ for all $j = 1,\cdots, k$ and $U_{\mu_j(t), \xi_j(t)}(x)$ is defined in (\ref{umuxi}). Denote the error operator as
\begin{equation*}
S(u):= -u_t-(-\Delta)^su + u^p,
\end{equation*}
where $p = \frac{n+2s}{n-2s}$ . Then the error of $U_{\mu_j(t), \xi_j(t)}(x)$ is
\begin{eqnarray*}
\begin{aligned}
S(U_{\mu_j(t), \xi_j(t)}) &= -\frac{\partial }{\partial t}U_{\mu_j,\xi_j}(x) = \mu^{-\frac{n-2s}{2}}_j\left(\frac{\dot{\mu_j}}{\mu_j} Z_{n+1}(y_j)  + \frac{\dot{\xi_j}}{\mu_j}\cdot \nabla U(y_j)\right)\\
& = \mu^{-\frac{n-2s}{2}-1}_j\dot{\mu}_j Z_{n+1}(y_j)  + \mu^{-\frac{n-2s}{2}-1}_j\dot{\xi}_j\cdot \nabla U(y_j).
\end{aligned}
\end{eqnarray*}
Here $y_j = \frac{x-\xi_j(t)}{\mu_j(t)}$. It turns out that the terms $\mu^{-\frac{n-2s}{2}-1}_j\dot{\mu}_j Z_{n+1}(y_j)$ and $\mu^{-\frac{n-2s}{2}-1}_j\dot{\xi}_j\cdot \nabla U(y_j)$ do not have enough decay to perform the gluing method, so we add nonlocal terms to cancel them out at main order. Note that the main order of
$$
Z_{n+1}(y) = \frac{n-2s}{2}\alpha_{n, s}\frac{1-|y|^2}{\left(1+|y|^2\right)^{\frac{n-2s}{2}+1}}
$$
is
$$
-\frac{n-2s}{2}\alpha_{n, s}\frac{1}{\left(1+|y|^2\right)^{\frac{n-2s}{2}}}.
$$
Therefore, we consider the equation
\begin{equation}\label{2018320}
-\varphi_t -(-\Delta)^s\varphi - \frac{n-2s}{2}\alpha_{n, s}\frac{\dot{\mu}_j}{\mu_j}\frac{\mu^{-(n-2s)}_j}{\left(1+\left|\frac{x-\xi_j}{\mu_j}\right|^2\right)^{\frac{n-2s}{2}}} = 0 \text{ in }\mathbb{R}^n\times (t_0, +\infty).
\end{equation}
Then
\begin{equation*}
\Phi^0_j(x,t) = -\int_{t_0}^{t}\int_{\mathbb{R}^n}p(t-\tilde{s}, x-y)\frac{\dot{\mu}_j(\tilde{s})}{\mu_j(\tilde{s})}\frac{\mu^{-(n-2s)}_j(\tilde{s})}{\left(1+\left|\frac{y-\xi_j(\tilde{s})}
{\mu_j(\tilde{s})}\right|^2\right)^{\frac{n-2s}{2}}}dyd\tilde{s}
\end{equation*}
is a bounded solution for (\ref{2018320}). Here the function $p(t, x)$ is the heat kernel for the fractional heat operator $-\frac{\partial }{\partial t} - (-\Delta)^s$, see \cite{CabreRoquejoffreCMP} for its definition and properties.
Using the super-sub solution argument (see Lemma \ref{l4:lemma4.1}), it is easy to see that $\Phi^0_j(x, t)$ satisfies the estimate $\Phi^0_j(x, t) \sim  \frac{\dot{\mu}_j}{\mu_j}\frac{\mu^{-n+4s}_j}{1+|y_j|^{n-4s}}$.

Similarly, for $y_j = \frac{x-\xi_j}{\mu_j}$, we consider the equation
\begin{equation}\label{20183201}
-\varphi_t -(-\Delta)^s\varphi + \alpha_{n, s}(n-2s)\mu^{-(n-2s)-1}_j\frac{|y_j|^2}{\left(1+|y_j|^2\right)^{\frac{n-2s}{2}+2}}\dot{\xi}_j\cdot y_j = 0 \text{ in }\mathbb{R}^n\times (t_0, +\infty).
\end{equation}
Its solution defined by
\begin{equation*}
\begin{aligned}
\Phi^1_j(x,t) = -\int_{t_0}^{t}\int_{\mathbb{R}^n}p(t-\tilde{s}, x-y)\mu^{-(n-2s)}_j(\tilde{s})&\frac{\dot{\xi}_j(\tilde{s})\cdot \frac{y-\xi_j(\tilde{s})}{\mu_j(\tilde{s})}}{\mu_j(\tilde{s})}\times\\
&\quad\quad\quad\frac{\left|\frac{y-\xi_j(\tilde{s})}{\mu_j(\tilde{s})}\right|^2}
{\left(1+\left|\frac{y-\xi_j(\tilde{s})}{\mu_j(\tilde{s})}\right|^2\right)^{\frac{n-2s}{2}+2}}dyd\tilde{s}
\end{aligned}
\end{equation*} satisfies the estimate
$\Phi^1_j(x, t) \sim  \frac{|\dot{\xi}_j|}{\mu_j}\frac{\mu^{-n+4s}_j}{1+|y_j|^{n-4s+1}}$. Define $\Phi_j^*(x, t) = \Phi^0_{j}(x, t) + \Phi^1_j(x, t)$.
Since $u = 0$ in $\mathbb{R}^n\setminus\Omega$, a better approximation than $\sum_{j=1}^kU_{\mu_j(t), \xi_j(t)}(x)$ is
\begin{equation}\label{e2:3}
u_{\mu,\xi}(x, t) = \sum_{j=1}^ku_j(x, t)~\mbox{with}~ u_j(x, t):=U_{\mu_j,\xi_j}(x) + \mu_j^{\frac{n-2s}{2}}\Phi_j^*(x, t) -  \mu_j^{\frac{n-2s}{2}} H(x, q_j).
\end{equation}
The error of $u_{\mu,\xi}$ can be computed as
\begin{equation}\label{e2:3333}
S(u_{\mu, \xi}) = -\sum_{i=1}^k\partial_tu_i + \left(\sum_{i=1}^ku_i\right)^p - \sum_{i=1}^kU^p_{\mu_i,\xi_i} - \sum_{i=1}^k\mu_i^{\frac{n-2s}{2}}(-\Delta)^s\Phi_i^*(x, t).
\end{equation}
\subsection{The error $S(u_{\mu, \xi})$.}
Near a given point $q_j$, we have the following estimate.
\begin{lemma}\label{l2.1}
Consider the region $|x-q_j|\leq \frac{1}{2}\min_{i\neq l}|q_i - q_l|$ for a fixed index $j$, denote $x = \xi_j + \mu_j y_j$, then we have
\begin{equation*}
S(u_{\mu, \xi}) = \mu_j^{-\frac{n+2s}{2}}(\mu_j E_{0j} + \mu_jE_{1j} + \mathcal{R}_j)
\end{equation*}
with
\begin{equation*}
\begin{aligned}
E_{0j}&= pU(y_j)^{p-1}\left[-\mu_j^{n-2s-1}H(q_j, q_j) + \sum_{i\neq j}\mu_j^{\frac{n-2s}{2}-1}\mu_i^{\frac{n-2s}{2}}G(q_j, q_i)\right]\\
&\quad + pU(y_j)^{p-1}\mu_j^{n-2s-1}\Phi^0_j(q_j, t)\\
&\quad +\mu_j^{2s-2}\dot{\mu}_j\left(Z_{n+1}(y_j) + \frac{n-2s}{2}\alpha_{n, s}\frac{1}{\left(1+|y_j|^2\right)^{\frac{n-2s}{2}}}\right),
\end{aligned}
\end{equation*}
\begin{equation*}
\begin{aligned}
E_{1j} &= pU(y_j)^{p-1}\left[-\mu_j^{n-2s}\nabla H(q_j, q_j)+ \sum_{i\neq j}\mu_j^{\frac{n-2s}{2}}\mu_i^{\frac{n-2s}{2}}\nabla G(q_j, q_i)\right]\cdot y_j\\
&\quad + pU(y_j)^{p-1}\mu_j^{n-2s-1}\Phi^1_j(q_j, t) + \alpha_{n, s}(n-2s)\mu^{2s-2}_j\frac{\dot{\xi}_j\cdot y_j}{\left(1+|y_j|^2\right)^{\frac{n-2s}{2}+1}}
\end{aligned}
\end{equation*}
and
\begin{equation*}
\mathcal{R}_j = \frac{\mu_0^{n-2s+2}g}{1+|y_j|^{4s-2}}+\frac{\mu_0^{n-2s}\vec{g}}{1+|y_j|^{4s}}\cdot(\xi_j-q_j) + \mu_0^{n+2s}f + \mu_0^{n-1}\sum_{i=1}^k\dot{\mu}_if_i + \mu_0^{n}\sum_{i=1}^k\dot{\xi}_i\cdot\vec{f}_i,
\end{equation*}
where $f$, $f_i$, $\vec{f}_i$ are smooth, bounded functions depending on $(y, \mu_0^{-1}\mu, \xi, \mu_jy_j)$, and $g$, $\vec{g}$ depend on $(y, \mu_0^{-1}\mu, \xi)$.
\end{lemma}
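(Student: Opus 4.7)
\medskip

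\noindent\textbf{Proof proposal.} The plan is to expand each of the four pieces in (\ref{e2:3333}) in the region $|x-q_j|\le \tfrac{1}{2}\min_{i\neq l}|q_i-q_l|$, isolate the two leading contributions $\mu_j E_{0j}$ (independent of $y_j$) and $\mu_j E_{1j}$ (linear in $y_j$), and show that every remaining piece fits the structural form of $\mathcal{R}_j$. The crucial cancellation is engineered by the nonlocal corrections $\Phi_j^0, \Phi_j^1$: combining $-\partial_t U_{\mu_j,\xi_j}$ with $-\mu_j^{\frac{n-2s}{2}}(-\Delta)^s\Phi_j^*$ and invoking the defining equations (\ref{2018320}), (\ref{20183201}) replaces the slowly-decaying inhomogeneities $\mu_j^{-\frac{n-2s}{2}-1}\dot\mu_j Z_{n+1}(y_j)$ and $\mu_j^{-\frac{n-2s}{2}-1}\dot\xi_j\cdot\nabla U(y_j)$ by the faster-decaying combinations $Z_{n+1}(y_j)+\tfrac{n-2s}{2}\alpha_{n,s}(1+|y_j|^2)^{-\frac{n-2s}{2}}$ and $\dot\xi_j\cdot y_j(1+|y_j|^2)^{-\frac{n-2s}{2}-1}$ that appear inside $E_{0j}$ and $E_{1j}$.

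Next I linearize the nonlinearity around the $j$-th bubble. Writing
\[
\sum_{i=1}^k u_i = U_{\mu_j,\xi_j}+w_j,\qquad w_j := -\mu_j^{\frac{n-2s}{2}}H(x,q_j)+\mu_j^{\frac{n-2s}{2}}\Phi_j^*(x,t)+\sum_{i\neq j}u_i,
\]
we obtain $\big(\sum_i u_i\big)^p-\sum_i U_{\mu_i,\xi_i}^p = p\,U_{\mu_j,\xi_j}^{p-1}w_j + O(w_j^2)-\sum_{i\neq j}U_{\mu_i,\xi_i}^p$. For $i\neq j$ and $x$ in the given region, the far-field expansion $U_{\mu_i,\xi_i}(x)=\mu_i^{\frac{n-2s}{2}}\Gamma(x-\xi_i)+O\big(\mu_i^{\frac{n-2s}{2}+2}\big)$, combined with the Green's-function identity $\Gamma(x-q_i)=H(x,q_i)+G(x,q_i)$ in $\Omega$, lets me replace each remote bubble's contribution to $w_j$ by $\mu_i^{\frac{n-2s}{2}}G(x,q_i)+\mu_i^{\frac{n-2s}{2}}\Phi_i^*(x,t)$ modulo terms absorbed into $\mathcal{R}_j$, while $U_{\mu_i,\xi_i}^p$ itself is $O(\mu_0^{\frac{n+2s}{2}})$.

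I then Taylor expand $H(x,q_j)$, $G(x,q_i)$ for $i\neq j$, and $\Phi_i^*(x,t)$ around $x=q_j$ using $x-q_j=\mu_j y_j+(\xi_j-q_j)$. The constant-in-$y_j$ pieces, multiplied against $pU_{\mu_j,\xi_j}^{p-1}$ and combined with the $\dot\mu_j$ term surviving from Step 1, reproduce $E_{0j}$ once one factors out $\mu_j^{-\frac{n+2s}{2}}\mu_j$; the linear-in-$y_j$ pieces, together with the $\dot\xi_j$ term, give $E_{1j}$. Every Taylor remainder of order $\ge 2$ in $|x-q_j|$, after extracting the $\mu_j^{-\frac{n+2s}{2}}$ prefactor and using $U^{p-1}(y_j)\sim(1+|y_j|)^{-4s}$, contributes the $\mu_0^{n-2s+2}g/(1+|y_j|^{4s-2})$ piece of $\mathcal{R}_j$. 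The displacement $\xi_j-q_j$ occurring in the first-order Taylor term of $H$ around $q_j$ (rather than $\xi_j$) contributes the $\mu_0^{n-2s}\vec g\cdot(\xi_j-q_j)/(1+|y_j|^{4s})$ piece; the quadratic error $O(w_j^2)$ and the remote terms $U_{\mu_i,\xi_i}^p$ contribute $\mu_0^{n+2s}f$; and the time derivatives $\partial_t(\mu_j^{\frac{n-2s}{2}}H)$, $\partial_t(\mu_i^{\frac{n-2s}{2}}\Phi_i^*)$, and $\partial_t u_i$ for $i\neq j$ supply the $\mu_0^{n-1}\sum\dot\mu_i f_i+\mu_0^n\sum\dot\xi_i\cdot\vec f_i$ pieces.

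The main obstacle is the careful bookkeeping of the cancellation in Step 1. One must verify that the value $\Phi_j^*(q_j,t)$ lands cleanly inside $E_{0j}$ (and its spatial gradient inside $E_{1j}$) while the higher-order pieces---including the $\partial_t$ derivative hitting $\mu_j^{\frac{n-2s}{2}}\Phi_j^*$ and the spatial Hessian of $\Phi_j^*$---fit inside $\mathcal{R}_j$ with the claimed structure. This requires the representation formulas for $\Phi_j^0, \Phi_j^1$ via the fractional heat kernel together with the pointwise asymptotics $\Phi_j^0\sim \frac{\dot\mu_j}{\mu_j}\frac{\mu_j^{-n+4s}}{1+|y_j|^{n-4s}}$ and $\Phi_j^1\sim \frac{|\dot\xi_j|}{\mu_j}\frac{\mu_j^{-n+4s}}{1+|y_j|^{n-4s+1}}$ established just above the statement. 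Once those pointwise bounds are in hand, the rest of the proof is a direct check using $\mu_i\sim\mu_0$ uniformly in $i$ and grouping terms by the powers of $\mu_0, y_j$ they carry.
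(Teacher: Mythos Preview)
Your outline matches the paper's proof in substance: the paper also splits $S(u_{\mu,\xi})$ into a time-derivative block and a nonlinearity block, linearizes the latter around $U_{\mu_j,\xi_j}$ via the perturbation $\Theta_j$ (your $w_j$), uses the far-field expansion of the remote bubbles together with $\Gamma=G+H$ to produce the $G(q_j,q_i)$ terms, and Taylor-expands around $q_j$ to separate the constant and linear-in-$y_j$ pieces into $E_{0j}$ and $E_{1j}$.

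One bookkeeping point needs sharpening. You write that the cancellation comes from ``combining $-\partial_t U_{\mu_j,\xi_j}$ with $-\mu_j^{\frac{n-2s}{2}}(-\Delta)^s\Phi_j^*$ and invoking (\ref{2018320}), (\ref{20183201})'', and later that ``the $\partial_t$ derivative hitting $\mu_j^{\frac{n-2s}{2}}\Phi_j^*$'' goes into $\mathcal{R}_j$. But the defining PDEs for $\Phi_j^0,\Phi_j^1$ relate $-\partial_t\Phi_j^*-(-\Delta)^s\Phi_j^*$ to the source; you cannot invoke them using only the $(-\Delta)^s\Phi_j^*$ piece. The correct split is
\[
-\partial_t\bigl(\mu_j^{\frac{n-2s}{2}}\Phi_j^*\bigr)-\mu_j^{\frac{n-2s}{2}}(-\Delta)^s\Phi_j^*
=-\tfrac{n-2s}{2}\mu_j^{\frac{n-2s}{2}-1}\dot\mu_j\,\Phi_j^*+\mu_j^{\frac{n-2s}{2}}\bigl[-\partial_t\Phi_j^*-(-\Delta)^s\Phi_j^*\bigr],
\]
where the bracket is replaced by the source via the PDE and then added to $-\partial_t U_{\mu_j,\xi_j}$ to produce the combinations in $E_{0j},E_{1j}$, while only the first term on the right (the $\dot\mu_j$ hitting the prefactor) goes into $\mathcal{R}_j$. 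If that is what you intended by ``the $\partial_t$ derivative hitting $\mu_j^{\frac{n-2s}{2}}\Phi_j^*$'', then your argument is complete and agrees with the paper; just make the wording unambiguous.
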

\begin{proof}
We write
\begin{equation*}
u_{\mu,\xi}(x, t) = \sum_{i=1}^k\mu_i^{-\frac{n-2s}{2}}U(y_i) + \mu_i^{\frac{n-2s}{2}}\Phi_i^*(x, t) -  \mu_i^{\frac{n-2s}{2}} H(x, q_i), \quad y_i = \frac{x-\xi_i}{\mu_i}
\end{equation*}
and
\begin{equation*}
S(u_{\mu, \xi}) = S_1 + S_2,
\end{equation*}
where
\begin{equation*}
\begin{aligned}
S_1 :=&\sum_{i=1}^k\Big(\mu_i^{-\frac{n-2s}{2}-1}\dot{\xi}_i\cdot \nabla U(y_i)+\mu_i^{-\frac{n-2s}{2}-1}\dot{\mu}_iZ_{n+1}(y_i)\\
& + \frac{n-2s}{2}\mu_i^{\frac{n-2s}{2}-1}\dot{\mu}_iH(x, q_i)\Big) -\sum_{i=1}^k\partial_t\left(\mu_i^{\frac{n-2s}{2}}\Phi_i^*(x, t)\right),
\end{aligned}
\end{equation*}
\begin{equation*}
\begin{aligned}
S_2:=&\left(\sum_{i=1}^k\mu_i^{-\frac{n-2s}{2}}U(y_i) + \mu_i^{\frac{n-2s}{2}}\Phi_i^*(x, t) -  \mu_i^{\frac{n-2s}{2}} H(x, q_i)\right)^p\\ &-\sum_{i=1}^k\mu_i^{-\frac{n+2s}{2}}U(y_i)^p- \sum_{i=1}^k\mu_i^{\frac{n-2s}{2}}(-\Delta)^s\Phi_i^*(x, t).
\end{aligned}
\end{equation*}
Let
\begin{equation*}
S_2 = S_{21} + S_{22}
\end{equation*}
with
\begin{equation*}
\begin{aligned}
S_{21} = &\mu_j^{-\frac{n+2s}{2}}\left[\left(U(y_j) + \Theta_j\right)^p - U(y_j)^p\right],
\end{aligned}
\end{equation*}
\begin{equation*}
S_{22} = -\sum_{i\neq j}\mu_i^{-\frac{n+2s}{2}}U(y_i)^p- \sum_{i=1}^k\mu_i^{\frac{n-2s}{2}}(-\Delta)^s\Phi_i^*(x, t)
\end{equation*}
and
\begin{equation}\label{e:2018317}
\begin{aligned}
\Theta_j =&  -\mu_j^{n-2s}\left(H(x, q_j) - \Phi_j^*(x, t)\right)\\
& + \sum_{i\neq j}\left[(\mu_j\mu_i^{-1})^{\frac{n-2s}{2}}U(y_i)-(\mu_j\mu_i)^{\frac{n-2s}{2}}\left(H(x, q_i) - \Phi_i^*(x, t)\right)\right].
\end{aligned}
\end{equation}
Observe that $|\Theta_j|\lesssim \mu_0^{n-2s}$ uniformly in small $\delta$, we assume $U(y_j)^{-1}|\Theta_j|< \frac{1}{2}$ in the considered region. By Taylor expansion, we have
\begin{equation*}
\begin{aligned}
S_{21} = &\mu_j^{-\frac{n+2s}{2}}\left[pU(y_j)^{p-1}\Theta_j + p(p-1)\int_{0}^1(1-s)\left(U(y_j) + s\Theta_j\right)^{p-2}ds\Theta_j^2\right].
\end{aligned}
\end{equation*}
For $i\neq j$,
\begin{equation*}
\begin{aligned}
U(y_i) &= U\left(\frac{\mu_jy_j+\xi_j-\xi_i}{\mu_i}\right) = \frac{\alpha_{n,s}\mu_i^{n-2s}}{(\mu_i^2 + |\mu_jy_j+\xi_j - \xi_i|^2)^{\frac{n-2s}{2}}}\\
& = \frac{\alpha_{n,s}\mu_i^{n-2s}}{|\mu_jy_j+\xi_j - \xi_i|^{n-2s}} + \mu_i^{n-2s+2}f(\xi, \mu, \mu_jy_j),
\end{aligned}
\end{equation*}
where $f$ is smooth in its arguments and $f(q, 0, 0) = 0$. Then
\begin{equation*}
\begin{aligned}
\Theta_j =& -\mu_j^{n-2s}\left(H(\mu_jy_j+\xi_j, q_j) - \Phi_j^*(\mu_jy_j+\xi_j, t)\right)\\
&+ \sum_{i\neq j}\left[(\mu_j\mu_i)^{\frac{n-2s}{2}}G(\mu_jy_j+\xi_j, q_i) + \mu_i^{n-2s+2}f(\xi, \mu, \mu_jy_j)\right.\\
&+\left.(\mu_j\mu_i)^{\frac{n-2s}{2}}\Phi_i^*(\mu_j y_j+\xi_j,t)\right].
\end{aligned}
\end{equation*}
By further expansion, we get
\begin{equation*}
\begin{aligned}
\Theta_j =& -\mu_j^{n-2s}\left(H(q_j, q_j) - \Phi_j^*(q_j, t)\right)+ \sum_{i\neq j}(\mu_j\mu_i)^{\frac{n-2s}{2}}G(q_j, q_i) \\
& + \mu_i^{n-2s+2}f(\xi, \mu, \mu_jy_j)+(\mu_j\mu_i)^{\frac{n-2s}{2}}\Phi_i^*(\mu_j y_j+\xi_j,t)+(\mu_jy_j+\xi_j - q_j)\cdot\\
&\quad\quad\quad\quad\quad\quad \left[-\mu_j^{n-2s}\nabla \left(H(q_j, q_j) - \Phi_j^*(q_j, t)\right) + \sum_{i\neq j}(\mu_j\mu_i)^{\frac{n-2s}{2}}\nabla G(q_j, q_i)\right]\\
& +\int_{0}^1\Big\{-\mu_j^{n-2s}D_x^2 \left(H-\Phi_j^*\right)(q_j + s(\mu_jy_j+\xi_j - q_j), q_j)\\ &  + \sum_{i\neq j}(\mu_j\mu_i)^{\frac{n-2s}{2}}D_x^2 G(q_j + s(\mu_jy_j+\xi_j - q_j), q_i)\Big\}[\mu_jy_j+\xi_j - q_j]^2(1-s)ds.
\end{aligned}
\end{equation*}
We conclude that
\begin{equation*}
\begin{aligned}
\Theta_j &= -\mu_j^{n-2s}\left(H(q_j, q_j) - \Phi_j^*(q_j, t)\right) + \sum_{i\neq j}(\mu_j\mu_i)^{\frac{n-2s}{2}}G(q_j, q_i) \\
&\quad +\left[-\mu_j^{n-2s+1}\nabla H(q_j, q_j) + \sum_{i\neq j}\mu_j^{\frac{n-2s}{2}+1}\mu_i^{\frac{n-2s}{2}}\nabla G(q_j, q_i)\right]\cdot y_j\\
&\quad +\mu_0^{n-2s}(\xi_j-q_j)\cdot f(\xi, \mu_jy_j, \mu_0^{-1}\mu) + \mu_0^{n-2s+2}F(\xi, \mu_jy_j, \mu_0^{-1}\mu)[y_j]^2\\
&\quad +\mu_i^{n-2s+2}f(\xi, \mu, \mu_jy_j),
\end{aligned}
\end{equation*}
where $f$ and $F$ are smooth and bounded in its arguments.
On the other hand, we have
\begin{equation*}
\begin{aligned}
S_{22} &= -\sum_{i\neq j}\mu_i^{-\frac{n+2s}{2}}U(y_i)^p - \sum_{i=1}^k\mu_i^{\frac{n-2s}{2}}(-\Delta)^s\Phi_i^*(x, t)\\
& = -\sum_{i\neq j}\frac{\alpha_{n,s}^p\mu_i^{\frac{n+2s}{2}}}{|q_j - q_i|^{n+2s}} + \mu_i^{\frac{n+2s}{2}}f(\xi, \mu, \mu_iy_i) - \sum_{i=1}^k\mu_i^{\frac{n-2s}{2}}(-\Delta)^s\Phi_i^*(x, t),
\end{aligned}
\end{equation*}
so
\begin{equation*}
S_{22} = \mu_0^{\frac{n+2s}{2}}f(\xi, \mu_0^{-1}\mu, \mu_jy_j) - \sum_{i=1}^k\mu_i^{\frac{n-2s}{2}}(-\Delta)^s\Phi_i^*(x, t),
\end{equation*}
where $f$ is smooth in its arguments and $f(q, 0, 0) = 0$.

Decompose $S_1 = S_{11} + S_{12}$, where
\begin{equation*}
S_{11} :=\mu_j^{-\frac{n-2s}{2}-1}\dot{\xi}_j\cdot \nabla U(y_j)+\mu_j^{-\frac{n-2s}{2}-1}\dot{\mu}_jZ_{n+1}(y_j) - \mu_j^{\frac{n-2s}{2}}\partial_t\Phi_j^*(x, t),
\end{equation*}
\begin{equation*}
\begin{aligned}
S_{12} :=&\sum_{i\neq j}\mu_i^{-\frac{n-2s}{2}-1}\dot{\xi}_i\cdot \nabla U(y_i) - \left(\partial_t\mu_j^{\frac{n-2s}{2}}\right)\Phi_j^*(x, t)\\
&+\mu_i^{-\frac{n-2s}{2}-1}\dot{\mu}_iZ_{n+1}(y_i) + \sum_{i=1}^k\frac{n-2s}{2}\mu_i^{\frac{n-2s}{2}-1}\dot{\mu}_iH(x, q_i)\\
& - \sum_{i\neq j}^k\partial_t\left(\mu_i^{\frac{n-2s}{2}}\Phi_i^*(x, t)\right).
\end{aligned}
\end{equation*}
We write
\begin{eqnarray*}
\begin{aligned}
S_{12} =&\sum_{i\neq j}-\alpha_{n,s}(n-2s)\mu_i^{\frac{n-2s}{2}}\dot{\xi}_i\cdot \left[\frac{q_i-q_j}{|q_i-q_j|^{n-2s+2}}+\vec{f}_i(\xi,\mu, \mu_jy)\right]\\
& +\sum_{i\neq j}\mu_i^{\frac{n-2s}{2}-1}\dot{\mu}_i\left[\frac{c_{n, s}}{|q_i-q_j|^{n-2s}} + f_i(\xi,\mu, \mu_jy)\right]\\
& +\sum_{i=1}^k\frac{n-2s}{2}\mu_i^{\frac{n-2s}{2}-1}\dot{\mu}_i\left[\left(H(q_j, q_i)-\Phi_i^*(q_j, t)\right)+f_i(\mu_jy, \xi)\right],
\end{aligned}
\end{eqnarray*}
where $\vec{f}_i$ are smooth in their arguments vanishing in the limit. In total, we can write
\begin{equation*}
S_{12} = \mu_0^{\frac{n-2s}{2}-1}\sum_{i=1}^k\dot{\mu}_if_{i0}(\mu_0^{-1}\mu, \xi, \mu_jy) + \mu_0^{\frac{n-2s}{2}}\sum_{i=1}^k\dot{\xi}_i\cdot \vec{f}_{i1}(\mu_0^{-1}\mu, \xi, \mu_jy)
\end{equation*}
for functions $f_{i0}$, $\vec{f}_{i1}$ smooth in their arguments. This concludes the proof of the lemma.
\end{proof}

Next, we try to find a solution with the following form
\begin{equation*}
u(x, t) = u_{\mu, \xi}(x,t) + \tilde{\phi}(x, t),
\end{equation*}
where $\tilde{\phi}$ is a small term. By $S(u_{\mu, \xi}+\tilde{\phi})=0$, our main equation can be expressed with respect to $\tilde{\phi}$ as
\begin{equation}\label{e2:27}
 -\partial_t\tilde{\phi}-(-\Delta)^s\tilde{\phi} + pu_{\mu, \xi}^{p-1}\tilde{\phi} + S(u_{\mu, \xi}) + \tilde{N}_{\mu, \xi}(\tilde{\phi}),
\end{equation}
where
\begin{equation}\label{e2:28}
\tilde{N}_{\mu, \xi}(\tilde{\phi}) = (u_{\mu, \xi} + \tilde{\phi})^p-u^p_{\mu, \xi} - pu_{\mu, \xi}^{p-1}\tilde{\phi}.
\end{equation}
Write $\tilde{\phi}(x, t)$ in self-similar form around $q_j$
\begin{equation}\label{e2:29}
\tilde{\phi}(x, t) = \mu_j^{-\frac{n-2s}{2}}\phi\left(\frac{x-\xi_j}{\mu_j}, t\right).
\end{equation}
By a direct computation, we obtain from \eqref{e2:27}-\eqref{e2:29} that
\begin{equation}\label{e2:30}
0=\mu_j^{\frac{n+2s}{2}}S(u_{\mu, \xi}+\tilde{\phi}) = -(-\Delta)^s_y\phi + pU(y)^{p-1}\phi + \mu_j^{\frac{n+2s}{2}}S(u_{\mu, \xi}) + A[\phi]
\end{equation}
with
\begin{equation*}\label{Aphi}
\begin{aligned}
A[\phi] =& -\mu_j^{2s}\partial_t\phi + \mu_j^{2s-1}\dot{\mu}_j\left[\frac{n-2s}{2}\phi+y\cdot\nabla_y\phi\right] + \nabla_y\phi\cdot \mu_j^{2s-1}\dot{\xi}_j\\
&+ p\left[\left(U(y) + \Theta_j\right)^{p-1} - U(y)^{p-1}\right]\phi + \left(U(y) + \Theta_j + \phi\right)^{p}\\
&- \left(U(y) + \Theta_j\right)^{p} - p\left(U(y) + \Theta_j\right)^{p-1}\phi,\\
\end{aligned}
\end{equation*}
where $\Theta_j$ is defined in (\ref{e:2018317}). We assume that $\phi$ decays in the $y$ variable and $A[\phi]$ is small when $t$ is large.

\subsection{Improvement of the approximation.}
To improve the approximation, $\phi(y, t)$ should be equal to the solution $\phi_{0j}(y, t)$ of the following elliptic type equation of order $2s$
\begin{equation}\label{e2:31}
-(-\Delta)^s_y\phi_{0j} + pU(y)^{p-1}\phi_{0j} = -\mu_j^{\frac{n+2s}{2}}S(u_{\mu, \xi})\text{ in }\mathbb{R}^n,\,\,\phi_{0j}(y, t)\to 0\text{ as }|y|\to \infty
\end{equation}
at main order.

Equation (\ref{e2:31}) is a special case of
\begin{equation}\label{e2:32}
L_0[\psi]:= -(-\Delta)^s_y\psi + pU(y)^{p-1}\psi = h(y)\text{ in }\mathbb{R}^n,\,\,\psi(y)\to 0\text{ as }|y|\to \infty.
\end{equation}
It is well known (see \cite{DelPinoSirePAMS}) that every bounded solution of $L_0[\psi] = 0$ in $\mathbb{R}^n$ is the linear combination of the functions
$$Z_1,\cdots, Z_{n+1},$$
where
\begin{equation*}
Z_i(y):= \frac{\partial U}{\partial y_i}(y),\quad i = 1,\cdots, n, \quad Z_{n+1}(y):=\frac{n-2s}{2}U(y) + y\cdot\nabla U(y).
\end{equation*}
Furthermore, problem (\ref{e2:32}) is solvable for $h(y) = O(|y|^{-m})$, $m > 2s$, if it holds that
\begin{equation*}
\int_{\mathbb{R}^n}h(y)Z_i(y)dy = 0\quad\text{for all}\quad i = 1,\cdots, n+1.
\end{equation*}

First, we consider the solvability condition for (\ref{e2:31}),
\begin{equation}\label{e2:35}
\int_{\mathbb{R}^n}\mu_j^{\frac{n+2s}{2}}S(u_{\mu, \xi})(y, t)Z_{n+1}(y)dy = 0.
\end{equation}
We claim that by choosing $\mu_{0j} = b_j\mu_0(t)$ for suitable positive constant $b_j$, $j = 1, \cdots, k$, $\mu_0(t) = c_{n, s} t^{-\frac{1}{n-4s}}$ with $c_{n, s}$ be a positive constant depending only on $n$ and $s$, this identity can be achieved at main order. Note that, we have $\dot{\mu}_0(t) = -\frac{1}{(n-4s)c_{n,s}^{n-4s}}\mu_0^{n-4s+1}(t)$. The main contribution to the integral comes from the term
\begin{equation*}
\begin{aligned}
E_{0j}&= pU(y_j)^{p-1}\left[-\mu_j^{n-2s-1}H(q_j, q_j) + \sum_{i\neq j}\mu_j^{\frac{n-2s}{2}-1}\mu_i^{\frac{n-2s}{2}}G(q_j, q_i)\right]\\
&\quad + pU(y_j)^{p-1}\mu_j^{n-2s-1}\Phi^0_j(q_j, t)\\
&\quad +\mu_j^{2s-2}\dot{\mu}_j\left(Z_{n+1}(y_j) + \frac{n-2s}{2}\alpha_{n, s}\frac{1}{\left(1+|y_j|^2\right)^{\frac{n-2s}{2}}}\right).
\end{aligned}
\end{equation*}
Now, let us compute the nonlocal term $\Phi^0_j(q_j, t)$. Since the heat kernel function $p(t, x)$ satisfies
$$p(t-\tilde{s},q_j-y)\asymp \frac{t-\tilde{s}}{[(t-\tilde{s})^{\frac{1}{s}}+|q_j-y|^2]^{\frac{n+2s}{2}}},$$
 we have
$$p(t-\tilde{s},q_j-y)=(t-\tilde{s})^{-\frac{n}{2s}}p\left(1,\frac{|q_j-y|}{(t-\tilde{s})^{\frac{1}{2s}}}\right)$$
and
\begin{equation*}
\begin{aligned}
&\Phi^0_j(q_j, t)= -\int_{t_0}^{t}\int_{\mathbb{R}^n}p(t-\tilde{s}, q_j-y)\frac{\dot{\mu}_j(\tilde{s})}{\mu_j(\tilde{s})}\frac{\mu^{-(n-2s)}_j(\tilde{s})}
{\left(1+\left|\frac{y-\xi_j(\tilde{s})}{\mu_j(\tilde{s})}\right|^2\right)^{\frac{n-2s}{2}}}dyd\tilde{s}\\
&= -(1+o(1))\int_{t_0}^{t}\int_{\mathbb{R}^n}p(t-\tilde{s}, q_j-y)\frac{\dot{\mu}_j(\tilde{s})}{\mu_j(\tilde{s})}\frac{\mu^{-(n-2s)}_j(\tilde{s})}
{\left(1+\left|\frac{y-q_j}{\mu_j}\right|^2\right)^{\frac{n-2s}{2}}}dyd\tilde{s}\\
&= -(1+o(1))\int_{t_0}^{t}\frac{1}{(t-\tilde{s})^{\frac{n}{2s}}}d\tilde{s}\int_{\mathbb{R}^n}p\left(1, \frac{q_j-y}{(t-\tilde{s})^{\frac{1}{2s}}}\right)\frac{\dot{\mu}_j(\tilde{s})}{\mu_j(\tilde{s})}\\
&\quad\quad\quad\quad\quad\quad\quad\quad\quad\quad\quad\quad\quad\quad\quad\quad\quad \times\frac{\mu^{-(n-2s)}_j(\tilde{s})\left(t-\tilde{s}\right)^{\frac{n}{2s}}}
{\left(1+\left|\frac{(t-\tilde{s})^{\frac{1}{2s}}}{\mu_j(\tilde{s})}\frac{q_j-y}{(t-\tilde{s})^{\frac{1}{2s}}}\right|^2\right)^{\frac{n-2s}{2}}
}d\frac{y-q_j}{(t-\tilde{s})^{\frac{1}{2s}}}\\
&= -(1+o(1))\int_{t_0}^{t}\frac{\dot{\mu}_j(\tilde{s})}{\mu_j(\tilde{s})}\mu_j^{-(n-2s)}(\tilde{s})d\tilde{s}\int_{\mathbb{R}^n}p\left(1, \frac{q_j-y}{(t-\tilde{s})^{\frac{1}{2s}}}\right)\\
&\quad\quad\quad\quad\quad\quad\quad\quad\quad\quad\quad\quad\quad\quad\quad\quad\quad\times\frac{1}{\left(1+\left|\frac{(t-\tilde{s})^{\frac{1}{2s}}}{\mu_j(\tilde{s})}\frac{q_j-y}{(t-\tilde{s})^{\frac{1}{2s}}}\right|^2\right)^{\frac{n-2s}{2}}}
d\frac{y-q_j}{(t-\tilde{s})^{\frac{1}{2s}}}\\
&= -(1+o(1))\int_{t_0}^{t}\frac{\dot{\mu}_j(\tilde{s})}{\mu_j(\tilde{s})}\mu_j^{-(n-2s)}(\tilde{s})F\left(\frac{(t-\tilde{s})^{\frac{1}{2s}}}{\mu_j(\tilde{s})}\right)d\tilde{s},
\end{aligned}
\end{equation*}
where
\begin{equation*}
F(a) = \int_{\mathbb{R}^n}p\left(1, x\right)\frac{1}
{\left(1+a^2|x|^2\right)^{\frac{n-2s}{2}}}dx.
\end{equation*}
We claim that
\begin{equation}\label{e:201803312}
\Phi^0_j(q_j, t) = -(1+o(1))\int_{t_0}^{t}\frac{\dot{\mu}_j(\tilde{s})}{\mu_j(\tilde{s})}\mu_j^{-(n-2s)}(\tilde{s})F\left(\frac{(t-\tilde{s})^{\frac{1}{2s}}}{\mu_j(\tilde{s})}\right)d\tilde{s} = c(1+o(1))
\end{equation}
for a suitable positive constant $c$ depending on $n$, $s$ and $b_j$, $j = 1,\cdots, k$.

Indeed, for a small constant $\delta > 0$, we decompose the integral $$\int_{t_0}^{t}\frac{\dot{\mu}_j(\tilde{s})}{\mu_j(\tilde{s})}\mu_j^{-(n-2s)}(\tilde{s})F\left(\frac{(t-\tilde{s})^{\frac{1}{2s}}}{\mu_j(\tilde{s})}\right)d\tilde{s}$$ as
\begin{equation*}
\begin{aligned}
&\int_{t_0}^{t}\frac{\dot{\mu}_j(\tilde{s})}{\mu_j(\tilde{s})}\mu_j^{-(n-2s)}(\tilde{s})F\left(\frac{(t-\tilde{s})^{\frac{1}{2s}}}{\mu_j(\tilde{s})}\right)d\tilde{s}\\
&=  \int_{t_0}^{t-\delta}\frac{\dot{\mu}_j(\tilde{s})}{\mu_j(\tilde{s})}\mu_j^{-(n-2s)}(\tilde{s})F\left(\frac{(t-\tilde{s})^{\frac{1}{2s}}}{\mu_j(\tilde{s})}\right)d\tilde{s}\\
&\quad + \int_{t-\delta}^{t}\frac{\dot{\mu}_j(\tilde{s})}{\mu_j(\tilde{s})}\mu_j^{-(n-2s)}(\tilde{s})F\left(\frac{(t-\tilde{s})^{\frac{1}{2s}}}{\mu_j(\tilde{s})}\right)d\tilde{s}\\
& := I_1 + I_2.
\end{aligned}
\end{equation*}
For the first integral we have $t-\tilde{s} >\delta$, therefore
\begin{equation*}
\begin{aligned}
0\leq -I_1 &\leq \int_{t_0}^{t-\delta}\mu_j^{-2s}(\tilde{s})F\left(\frac{(t-\tilde{s})^{\frac{1}{2s}}}{\mu_j(\tilde{s})}\right)d\tilde{s}\leq C\int_{t_0}^{t-\delta}\mu_j^{-2s}(\tilde{s})\left|\frac{(t-\tilde{s})^{\frac{1}{2s}}}{\mu_j(\tilde{s})}\right|^{-(n-2s)}d\tilde{s}\\
&= C b_j^{n-4s}c_{n,s}^{n-4s}\int_{t_0}^{t-\delta}\frac{1}{\tilde{s}}\frac{1}{(t-\tilde{s})^{\frac{n-2s}{2s}}}d\tilde{s}\leq \frac{C b_j^{n-4s}c_{n,s}^{n-4s}}{t_0}\frac{2s}{n-4s}\frac{1}{\delta^{\frac{n-4s}{2s}}}.
\end{aligned}
\end{equation*}
Here we have used the ansatz $\mu_{0j} = b_j c_{n,s} t^{-\frac{1}{n-4s}}$ and the fact that $|a|^{n-2s}F(a)\leq C$.
For the second integral $$I_2 =\int_{t-\delta}^{t}\frac{\dot{\mu}_j(\tilde{s})}{\mu_j(\tilde{s})}\mu_j^{-(n-2s)}(\tilde{s})F\left(\frac{(t-\tilde{s})^{\frac{1}{2s}}}{\mu_j(\tilde{s})}\right)d\tilde{s},$$ using change of variables
$\frac{(t-\tilde{s})^{\frac{1}{2s}}}{\mu_j(\tilde{s})} = \hat{s}$, we obtain
$$
d\tilde{s} = -\frac{\mu_j(\tilde{s})}{\frac{1}{2s}(t-\tilde{s})^{\frac{1}{2s}-1}+\dot{\mu}_j(\tilde{s})\hat{s}}d\hat{s}
$$
and the integral becomes
\begin{equation*}
\begin{aligned}
I_2&=\int_{t-\delta}^{t}\frac{\dot{\mu}_j(\tilde{s})}{\mu_j(\tilde{s})}\mu_j^{-(n-2s)}(\tilde{s})F\left(\frac{(t-\tilde{s})^{\frac{1}{2s}}}{\mu_j(\tilde{s})}\right)
d\tilde{s}\\ &=\int^{\frac{\delta^{\frac{1}{2s}}}{\mu(t-\delta)}}_{0}\frac{\dot{\mu}_j(\tilde{s})}{\mu_j(\tilde{s})}\mu_j^{-(n-2s)}(\tilde{s})F\left(\hat{s}\right)
\frac{\mu_j(\tilde{s})}{\frac{1}{2s}(t-\tilde{s})^{\frac{1}{2s}-1}+\dot{\mu}_j(\tilde{s})\hat{s}}d\hat{s}.
\end{aligned}
\end{equation*}
Note that $\frac{1}{2s}(t-\tilde{s})^{\frac{1}{2s}-1}+\dot{\mu}_j(\tilde{s})\hat{s} =\frac{1}{2s}(t-\tilde{s})^{\frac{1}{2s}-1}(1-\frac{2s}{(n-4s)\tilde{s}}(t-\tilde{s}))
> \frac{1}{2s}(t-\tilde{s})^{\frac{1}{2s}-1}(1-\frac{2s}{(n-4s)\tilde{s}}\delta)$, $d\tilde{s} = \frac{\mu_j(\tilde{s})}{\frac{1}{2s}(t-\tilde{s})^{\frac{1}{2s}-1}}(1+O(\delta))d\hat{s}$ for $\delta$ small and
\begin{equation*}
I_2= -\frac{2s b_j^{4s-n}}{(n-4s)c_{n,s}^{n-4s}}\left(\int^{\frac{\delta^{\frac{1}{2s}}}{\mu_j(t-\delta)}}_{0}\hat{s}^{2s-1}F\left(\hat{s}\right)
d\hat{s} + o(1)\right)
= -\frac{2s b_j^{4s-n}}{(n-4s)c_{n,s}^{n-4s}}A + o(1)
\end{equation*}
as long as $\frac{\delta^{\frac{1}{2s}}}{\mu_j(t-\delta)}$ is large. Here $A = \int_{0}^\infty \tilde{s}^{2s-1} F(\tilde{s})d\tilde{s} < +\infty$ since $n > 4s$.
Thus we have
\begin{equation}\label{e:2018330}
\begin{aligned}
\Phi^0_j(q_j, t) &= -(1+o(1))\int_{t_0}^{t}\frac{\dot{\mu}_j(\tilde{s})}{\mu_j(\tilde{s})}\mu_j^{-(n-2s)}(\tilde{s})F\left(\frac{(t-\tilde{s})^{\frac{1}{2s}}}{\mu_j(\tilde{s})}\right)d\tilde{s} \\
&= \frac{2s b_j^{4s-n}}{(n-4s)c_{n,s}^{n-4s}}A + o(1):= B b_j^{4s-n}+ o(1)
\end{aligned}
\end{equation}
when $t_0$ is chosen sufficiently large. Here $ B = B_{n, s}: = \frac{2s}{(n-4s)c_{n,s}^{n-4s}}A$. This proves (\ref{e:201803312}).

By direct computations, we have
\begin{equation}\label{e2:36}
\begin{aligned}
&\mu_0^{-(n-2s-1)}(t)\int_{\mathbb{R}^n}E_{0j}(y, t)Z_{n+1}(y)dy\\
& \approx  c_1\left[b_j^{n-2s-1}H(q_j, q_j) - \sum_{i\neq j}b_j^{\frac{n-2s}{2}-1}b_i^{\frac{n-2s}{2}}G(q_j, q_i)\right]\\
&\quad -\frac{2sc_1 A +c_2}{(n-4s)c_{n,s}^{n-4s}}b_j^{2s-1}
\end{aligned}
\end{equation}
with
\begin{equation*}
c_1 = -p\int_{\mathbb{R}^n}U(y)^{p-1}Z_{n+1}(y)dy,
\end{equation*}
\begin{equation*}
c_2 = \int_{\mathbb{R}^n}\left(Z_{n+1}(y) + \frac{n-2s}{2}\alpha_{n, s}\frac{1}{\left(1+|y|^2\right)^{\frac{n-2s}{2}}}\right)Z_{n+1}(y)dy.
\end{equation*}
Denote
\begin{equation*}
\mu_j(t) = b_j\mu_0(t) = b_j c_{n, s}t^{-\frac{1}{n-4s}}.
\end{equation*}
Then the solvability conditions (\ref{e2:35}) can be achieved at main order if
\begin{equation}\label{e2:39}
b_j^{n-2s}H(q_j, q_j) - \sum_{i\neq j}(b_ib_j)^{\frac{n-2s}{2}}G(q_j, q_i)-\frac{2sc_1 A +c_2}{(n-4s)c_{n,s}^{n-4s}c_1}b_j^{2s} = 0\text{ for all } j = 1,\cdots, k.
\end{equation}
By imposing $-\frac{2sc_1 A +c_2}{(n-4s)c_{n,s}^{n-4s}c_1}=-\frac{2s}{n-2s}$, namely
$$
c_{n,s}=\left[\frac{(2sc_1 A +c_2)(n-2s)}{2s(n-4s)c_1}\right]^{\frac{1}{n-4s}},
$$
we have
\begin{equation}\label{e2:40}
\dot{\mu}_0(t) = -\frac{2sc_1}{(2sc_1A + c_2)(n-2s)}\mu_0^{n-4s+1}(t).
\end{equation}
From \eqref{e2:39} and \eqref{e2:40}, the constants $b_j$ satisfy the system
\begin{equation}\label{e2:42}
b_j^{n-2s-1}H(q_j, q_j) - \sum_{i\neq j}b_j^{\frac{n-2s}{2}-1}b_i^{\frac{n-2s}{2}}G(q_j, q_i) = \frac{2sb_j^{2s-1}}{n-2s}\text{ for all } j = 1,\cdots, k.
\end{equation}

System (\ref{e2:42}) is equivalent to the variational problem $\nabla_bI(b) = 0$ with
\begin{equation}\label{e2:43}
I(b): = \frac{1}{n-2s}\left[\sum_{j=1}^kb_j^{n-2s}H(q_j, q_j) - \sum_{i\neq j}b_j^{\frac{n-2s}{2}}b_i^{\frac{n-2s}{2}}G(q_j, q_i)-\sum_{j=1}^kb_j^{2s}\right].
\end{equation}
Let $\Lambda_j = b_j^{\frac{n-2s}{2}}$. Then
\begin{equation*}
(n-2s)I(b)=\tilde{I}(\Lambda)=\left[\sum_{j=1}^kH(q_j, q_j)\Lambda_j^2 - \sum_{i\neq j}G(q_j, q_i)\Lambda_i\Lambda_j-\sum_{j=1}^k\Lambda_j^{\frac{4s}{n-2s}}\right].
\end{equation*}
Standard argument shows that system (\ref{e2:42}) has a unique solution if and only if the following matrix
\begin{eqnarray*}
\mathcal{G}(q) = \left[
\begin{matrix}
H(q_1, q_1)&-G(q_1, q_2)&\cdots & -G(q_1, q_k)\\
-G(q_2, q_1)&H(q_2, q_2)&\cdots &-G(q_2, q_k)\\
\vdots &\vdots & \ddots &\vdots\\
-G(q_k,q_1)&-G(q_k, q_2)&\cdots&H(q_k, q_k)
\end{matrix}
\right]
\end{eqnarray*}
is positive definite.

Next, we consider the solvability conditions for (\ref{e2:31}),
\begin{equation*}
\int_{\mathbb{R}^n}\mu_j^{\frac{n+2s}{2}}S(u_{\mu, \xi})(y, t)Z_{i}(y)dy = 0,\quad i = 1, \cdots, n.
\end{equation*}
It is clear that these conditions are fulfilled at main order by simply setting $\xi_{0j} = q_j$.

Now we fix the function $\mu_0(t)$ and the positive constants $b_j$ satisfying (\ref{e2:42}) and denote
\begin{equation*}
\bar{\mu}_0 = (\mu_{01},\cdots, \mu_{0k}) = (b_1\mu_0,\cdots, b_k\mu_0).
\end{equation*}
Let $\Phi_j$ be the unique solution to (\ref{e2:31}) for $\mu = \bar{\mu}_0$. Then we have
\begin{equation*}
-(-\Delta)^s_y\Phi_j + pU(y)^{p-1}\Phi_j = -\mu_{0j}E_{0j}[\bar{\mu}_0,\dot{\mu}_{0j}]\text{ in }\mathbb{R}^n,\,\,\Phi_j(y, t)\to 0\text{ as }|y|\to \infty.
\end{equation*}
From the definition of $\mu_0$ and $b_j$, one has
\begin{equation*}
\mu_{0j}E_{0j}=-\gamma_j\mu_0^{n-2s}q_0(y),
\end{equation*}
where the constant $\gamma_j$ is positive and
\begin{equation}\label{e2:50}
q_0(y): = \frac{pU(y)^{p-1}c_2b_j^{2s}}{(n-4s)c_{n,s}^{n-4s}c_1} + \frac{b_j^{2s}}{(n-4s)c_{n,s}^{n-4s}}\left(Z_{n+1}(y) + \frac{n-2s}{2}\alpha_{n, s}\frac{1}{\left(1+|y|^2\right)^{\frac{n-2s}{2}}}\right)
\end{equation}
with $\int_{\mathbb{R}^n}q_0(y)Z_{n+1}dy = 0$.

Let $p_0 = p_0(|y|)$ be the radial solution of $L_0(p_0) = q_0$. Then $p_0(|y|) = O(|y|^{-2s})$ as $|y|\to\infty$ since we have (\ref{e2:50}). Therefore,
\begin{equation}\label{e2:51}
\Phi_j(y, t) = \gamma_j\mu_0^{n-2s}p_0(y).
\end{equation}
Thus we can define the corrected approximation as
\begin{equation}\label{e2:52}
u^*_{\mu, \xi}(x, t) = u_{\mu, \xi}(x, t) + \tilde{\Phi}(x, t)
\end{equation}
with
\begin{equation*}
\tilde{\Phi}(x, t) = \sum_{j=1}^k\mu_j^{-\frac{n-2s}{2}}\Phi_j\left(\frac{x-\xi_j}{\mu_j}, t\right).
\end{equation*}

\subsection{Estimating the error $S(u^*_{\mu, \xi})$.}
In the region $|x-q_i| > \delta$ for all $i = 1,\cdots, k$, $S(u^*_{\mu, \xi})$ can be described as
\begin{equation}\label{awayq}
S(u^*_{\mu, \xi})(x, t) = \mu_0^{\frac{n-2s}{2}-1}\sum_{j=1}^k\dot{\mu}_jf_{j} + \mu_0^{\frac{n-2s}{2}}\sum_{j=1}^k\dot{\xi}_j\cdot \vec{f}_{j} + \mu_0^{\frac{n+2s}{2}}f,
\end{equation}
where the functions $f_j$, $\vec{f}_j$, $f$ are smooth bounded and depend on $(x, \mu_0^{-1}\mu, \xi)$.
Now we consider the region near each of the points $q_j$. By direct computations, $S(u^*_{\mu, \xi})$ is given by
\begin{equation}\label{sstar}
\begin{aligned}
S(u^*_{\mu, \xi})  = & S(u_{\mu, \xi}) - \sum_{j=1}^k\mu_j^{-\frac{n+2s}{2}}\mu_{0j}E_{0j}[\bar{\mu}_0, \dot{\mu}_{0j}]+\sum_{j=1}^k\mu_j^{-\frac{n+2s}{2}}\Bigg\{-\mu_j^{2s}\partial_t\Phi_j(y_j, t) \\
&+ \mu_j^{2s-1}\dot{\mu}_j\left[\frac{n-2s}{2}\Phi_{j}(y_j, t)+y_j\cdot\nabla_y\Phi_j\right] + \mu_j^{2s-1}\nabla_y\Phi_j(y_j, t)\cdot \dot{\xi}_j\Bigg\}\\
&+ (u_{\mu, \xi} + \tilde{\Phi})^{p} - u_{\mu, \xi}^{p} - p\sum_{j=1}^k\mu_j^{-\frac{n+2s}{2}}U(y_j)^{p-1}\Phi_j(y_j, t),
\end{aligned}
\end{equation}
where $y_j=\frac{x-\xi_j}{\mu_j}$.

From (\ref{e2:30}), for a given fixed $j$ and $|x-q_j|\leq \delta$, we have
\begin{equation}\label{e2:55}
\mu_j^{\frac{n+2s}{2}}S(u^*_{\mu, \xi}) = \mu_j^{\frac{n+2s}{2}}S(u_{\mu, \xi}) - \mu_{0j}E_{0j}[\bar{\mu}_0, \dot{\mu}_{0j}]+A_j(y).
\end{equation}
After direct computations,
\begin{equation}\label{e2:56}
A_j = \mu_0^{n+4s}f(\mu_0^{-1}\mu, \xi, \mu_jy) + \frac{\mu_0^{2n-4s}}{1+|y_j|^{2s}}g(\mu_0^{-1}\mu, \xi,\mu_jy),\quad y_j = \frac{x-\xi_j}{\mu_j},
\end{equation}
where $f$ and $g$ are smooth and bounded.

Finally we set
\begin{equation*}
\mu(t) = \bar{\mu}_0 + \lambda(t)\text{ with }\lambda(t) = (\lambda_1(t),\cdots, \lambda_k(t)).
\end{equation*}
From (\ref{e2:55}), we have
\begin{equation*}
\begin{aligned}
S(u^*_{\mu, \xi}) = \mu_j^{-\frac{n+2s}{2}}\Big\{\mu_{0j}\big(E_{0j}[\mu, \dot{\mu}_j]- E_{0j}[\bar{\mu}_0, \dot{\mu}_{0j}]\big)+\lambda_j&E_{0j}[\mu, \dot{\mu}_j]\\
&+\mu_jE_{1j}[\mu,\dot{\xi}_j]+R_j+A_j\Big\}.
\end{aligned}
\end{equation*}

Recall Lemma \ref{l2.1} that
\begin{equation*}
\begin{aligned}
&E_{0j}[\mu, \dot{\mu}_j]=pU(y)^{p-1}\left[-\mu_j^{n-2s-1}H(q_j, q_j) + \sum_{i\neq j}\mu_j^{\frac{n-2s}{2}-1}\mu_i^{\frac{n-2s}{2}}G(q_j, q_i)\right]\\
&\quad + pU(y)^{p-1}\mu_j^{n-2s-1}\Phi^0_j(q_j, t)+\mu_j^{2s-2}\dot{\mu}_j\left(Z_{n+1}(y) + \frac{n-2s}{2}\alpha_{n, s}\frac{1}{\left(1+|y|^2\right)^{\frac{n-2s}{2}}}\right).
\end{aligned}
\end{equation*}
Note that $\Phi_j^0$ depends on $\mu,$ $\dot{\mu}$ and
\begin{equation*}
\begin{aligned}
\mu_0^{n-2s-1}\Phi^0_j[\bar{\mu}_0+\lambda, b_j\dot{\mu}_0&+\dot{\lambda}_j](q_j, t) - \mu_0^{n-2s-1}\Phi^0_j[\bar{\mu}_0, b_j\dot{\mu}_0](q_j, t)\\
& =  -(b_j\mu_0)^{2s-2}2sA \dot{\lambda}_j - \mu_0^{n-2s-2}(n-4s+1)b_j^{2s-2}B\lambda_j
\end{aligned}
\end{equation*}
which can be deduced by similar arguments as (\ref{e:2018330}), we have
\begin{equation*}
\begin{aligned}
&E_{0j}[\bar{\mu}_0+\lambda, b_j\dot{\mu}_0+\dot{\lambda}_j]- E_{0j}[\bar{\mu}_0, b_j\dot{\mu}_0]\\
&= (b_j\mu_0)^{2s-2}\dot{\lambda}_j\left(Z_{n+1}(y) + \frac{n-2s}{2}\alpha_{n, s}\frac{1}{\left(1+|y|^2\right)^{\frac{n-2s}{2}}}\right)\\
&\quad + (2s-2)(b_j\mu_0)^{2s-3}\lambda_j(b_j\dot{\mu}_0+\dot{\lambda}_j)\left(Z_{n+1}(y) + \frac{n-2s}{2}\alpha_{n, s}\frac{1}{\left(1+|y|^2\right)^{\frac{n-2s}{2}}}\right)\\
&\quad -\mu_0^{n-2s-2}pU(y)^{p-1}\left[\sum_{i=1}^kM_{ij}\lambda_i +\sum_{i,l=1}^kf_{ijl}(\mu_0^{-1}\lambda)\lambda_i\lambda_l\right]\\
&\quad + \mu_0^{n-2s-2} pU(y)^{p-1}(n-2s-1)b_j^{2s-2}B\lambda_j\\
&\quad - pU(y)^{p-1}(b_j\mu_0)^{2s-2}2sA \dot{\lambda}_j- \mu_0^{n-2s-2}pU(y)^{p-1}(n-4s+1)b_j^{2s-2}B\lambda_j,
\end{aligned}
\end{equation*}
where $f_{ijl}$ are smooth functions and for $i = j$,
$$
M_{ij} = (n-2s-1)b_j^{n-2s-2}H(q_j, q_j) - (\frac{n-2s}{2}-1)\sum_{i\neq j}b_j^{\frac{n-2s}{2}-2}b_i^{\frac{n-2s}{2}}G(q_j, q_i),
$$
for $i\neq j$,
$$
M_{ij} = - \frac{n-2s}{2}\sum_{i\neq j}b_j^{\frac{n-2s}{2}-1}b_i^{\frac{n-2s}{2}-1}G(q_j, q_i).
$$
$M = D^2I_0(b)$ with
\begin{equation*}
I_0(b): = \frac{1}{n-2s}\left[\sum_{j=1}^kb_j^{n-2s}H(q_j, q_j) - \sum_{i\neq j}b_j^{\frac{n-2s}{2}}b_i^{\frac{n-2s}{2}}G(q_j, q_i)\right].
\end{equation*}
Since $D^2I(b)$ is positive definite, we denote its positive eigenvalues corresponding to the eigenvectors $\bar{w}_j$ by $\bar{\sigma}_j$  for $j=1,\cdots,k$. Thus
\begin{equation}\label{e2:46}
D^2I(b) = P^T\text{diag}(\bar{\sigma}_1,\cdots, \bar{\sigma}_k)P
\end{equation}
and $P$ is the $k\times k$ matrix given by $P := (\bar{w}_1,\cdots ,\bar{w}_k)$.
From the definition of $b_j$ in (\ref{e2:42}), one has
\begin{equation*}
\begin{aligned}
M=& D^2I_0(b) = D^2I(b) + \frac{2s(2s-1)}{n-2s}\text{diag}(b_1^{2s-2},\cdots, b_k^{2s-2})\\
=& P^T\text{diag}(\bar{\sigma}_1 + \frac{2s(2s-1)}{n-2s}b_1^{2s-2},\cdots, \bar{\sigma}_k+\frac{2s(2s-1)}{n-2s}b_k^{2s-2})P.
\end{aligned}
\end{equation*}

Now we estimate $\lambda_jE_{0j}[\mu, \dot{\mu}_j]$. Indeed, we have
\begin{equation*}
\begin{aligned}
&\lambda_j E_{0j}[\mu, \dot{\mu}_j]\\
&=(b_j\mu_0)^{2s-2}\lambda_j\dot{\lambda}_j\left(Z_{n+1}(y) + \frac{n-2s}{2}\alpha_{n, s}\frac{1}{\left(1+|y|^2\right)^{\frac{n-2s}{2}}}\right)\\
&\quad+ (2s-2)(b_j\mu_0)^{2s-3}\lambda_j^2(b_j\dot{\mu}_0+\dot{\lambda}_j)\left(Z_{n+1}(y) + \frac{n-2s}{2}\alpha_{n, s}\frac{1}{\left(1+|y|^2\right)^{\frac{n-2s}{2}}}\right) \\
&\quad +\lambda_jb_j^{2s-1}\left[\mu_0^{2s-2}\dot{\mu}_0\left(Z_{n+1}(y) + \frac{n-2s}{2}\alpha_{n, s}\frac{1}{\left(1+|y|^2\right)^{\frac{n-2s}{2}}}\right)\right.\\
&\quad + \left. pU(y)^{p-1}\mu_0^{n-2s-1}\left(-b_j^{n-4s}H(q_j, q_j) + \sum_{i\neq j}b_j^{\frac{n-6s}{2}}b_i^{\frac{n-2s}{2}}G(q_j, q_i)\right)\right]\\
&\quad + pU(y)^{p-1}b_j^{2s-1}\mu_0^{n-2s-1}B\lambda_j\\
&\quad -\mu_0^{n-2s-2}pU(y)^{p-1}\sum_{i,l=1}^kf_{ijl}(\mu_0^{-1}\lambda)\lambda_i\lambda_l,
\end{aligned}
\end{equation*}
where functions $f_{ijl}$ are smooth in its arguments.

Collecting all the above estimates, we have the full expansion for $S(u^*_{\mu, \xi})$.
\begin{lemma}\label{l2.2}
We consider the region $|x-q_j|\leq \frac{1}{2}\min_{i\neq l}|q_i-q_l|$ for a fixed index $j$. Let $\mu = \bar{\mu}_0 + \lambda$ and $|\lambda(t)|\leq \mu_0(t)^{1+\sigma}$ for some $0 < \sigma < \bar{\sigma}$ with $\bar{\sigma}$ be a constant satisfying $0 < \bar{\sigma}\leq \frac{n-2s}{2s}\bar{\sigma}_jb_j^{2-2s}$, $j = 1,\cdots,k$. Then for large $t$, $S(u^*_{\mu,\xi})$ can be expressed as
\begin{equation*}
\begin{aligned}
& S(u^*_{\mu, \xi}) =\sum_{j=1}^k\mu_j^{-\frac{n+2s}{2}}\\
&\quad\quad\Bigg\{\mu_{0j}(b_j\mu_0)^{2s-2}\dot{\lambda}_j\left(Z_{n+1}(y_j) + \frac{n-2s}{2}\alpha_{n, s}\frac{1}{\left(1+|y_j|^2\right)^{\frac{n-2s}{2}}}-2sApU(y_j)^{p-1}\right)\\
&\quad\quad\quad -\mu_{0j}\mu_0^{n-2s-2}pU(y_j)^{p-1}\sum_{i=1}^kM_{ij}\lambda_i+\mu_j^{2s-1}\alpha_{n, s}(n-2s)\frac{\dot{\xi}_j\cdot y_j}{\left(1+|y_j|^2\right)^{\frac{n-2s}{2}+1}}\\
&\quad\quad\quad +\mu_jpU(y_j)^{p-1}\Big[-\mu_j^{n-2s}\nabla H(q_j, q_j) + \sum_{i\neq j}\mu_j^{\frac{n-2s}{2}}\mu_i^{\frac{n-2s}{2}}\nabla G(q_j, q_i)\Big]\cdot y_j\Bigg\}\\
&\quad\quad + \sum_{j=1}^k\mu_j^{-\frac{n+2s}{2}}\lambda_jb_j^{2s-1}\\
&\quad\quad\quad\Bigg[(2s-1)\mu_0^{2s-2}\dot{\mu}_0\left(Z_{n+1}(y_j) + \frac{n-2s}{2}\alpha_{n, s}\frac{1}{\left(1+|y_j|^2\right)^{\frac{n-2s}{2}}}\right) \\
&\quad\quad\quad\quad +pU(y_j)^{p-1}\mu_0^{n-2s-1}\bigg(-b_j^{n-4s}H(q_j, q_j) + \sum_{i\neq j}b_j^{\frac{n-6s}{2}}b_i^{\frac{n-2s}{2}}G(q_j, q_i)\bigg)\\
\end{aligned}
\end{equation*}
\begin{equation*}
\begin{aligned}
&\quad\quad\quad +pU(y_j)^{p-1}\mu_0^{n-2s-1}(2s-1)B\Bigg]\\
&\quad\quad +\mu_0^{-\frac{n+2s}{2}}\Bigg[\sum_{j=1}^k\frac{\mu_0^{n-2s+2}g_j}{1+|y_j|^{4s-2}}+
\sum_{j=1}^k\frac{\mu_0^{2n-4s}g_j}{1+|y_j|^{2s}} + \sum_{j=1}^k\frac{\mu_0^{n-2s}g_j}{1+|y_j|^{4s}}\lambda_j\Bigg]\\
&\quad\quad +\mu_0^{-\frac{n+2s}{2}}\Bigg[\sum_{j=1}^k\frac{\mu_0^{n-2s}\vec{g}_j}{1+|y_j|^{4s}}\cdot(\xi_j-q_j)\Bigg]\\
&\quad\quad +\mu_0^{-\frac{n+2s}{2}}\left[\mu_0^{n-2s-2}\sum_{i,j,l=1}^kpU(y_j)^{p-1}f_{ijl}\lambda_i\lambda_l +\sum_{i,j,l=1}^k\frac{f_{ijl}}{1+|y_j|^{n-2s}}\lambda_i\dot{\lambda}_l\right]\\
&\quad\quad + \mu_0^{-\frac{n+2s}{2}}\left[\mu_0^{n+2s}f + \mu_0^{n-1}\sum_{i=1}^k\dot{\mu}_if_i + \mu_0^{n}\sum_{i=1}^k\dot{\xi}_i\vec{f}_i\right],
\end{aligned}
\end{equation*}
where $x = \xi_j + \mu_jy_j$, $\vec{f}_i$, $f_i$, $f$, $f_{ijl}$ are smooth and bounded functions depending on $(\mu_0^{-1}\mu, \xi, x)$ and $g_j$, $\vec{g}_j$ depend on $(\mu^{-1}_0\mu, \xi, y_j)$.
\end{lemma}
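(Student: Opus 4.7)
The plan is to start from the identity (\ref{sstar}), which writes $S(u^*_{\mu,\xi})$ as $S(u_{\mu,\xi})$ plus the terms produced when the transport/derivative terms $-\partial_t + (-\Delta)^s$ act on $\tilde{\Phi}$ plus the nonlinear remainder $(u_{\mu,\xi}+\tilde{\Phi})^p - u_{\mu,\xi}^p - p\sum_j \mu_j^{-(n+2s)/2}U(y_j)^{p-1}\Phi_j$. Using the defining equation for $\Phi_j$ (the solution of the elliptic equation (\ref{e2:31}) with right-hand side $-\mu_{0j}E_{0j}[\bar{\mu}_0,\dot{\mu}_{0j}]$), one collapses the main elliptic piece, so that after multiplying by $\mu_j^{(n+2s)/2}$ locally near $q_j$ one obtains exactly the relation (\ref{e2:55}), and the nonlinear remainder can be bounded pointwise to give the form (\ref{e2:56}) for $A_j$; this takes care of the pure $\tilde{\Phi}$--corrections and shuffles them into the terms ultimately absorbed by $\mu_0^{n+4s} f$ and $\mu_0^{2n-4s}(1+|y_j|^{2s})^{-1}g$.

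Next I would substitute $\mu = \bar{\mu}_0 + \lambda$ and write
\begin{equation*}
\mu_{0j}E_{0j}[\mu,\dot\mu_j] - \mu_{0j}E_{0j}[\bar{\mu}_0,\dot{\mu}_{0j}] + \lambda_j E_{0j}[\mu,\dot\mu_j] + \mu_j E_{1j}[\mu,\dot\xi_j] + \mathcal{R}_j + A_j,
\end{equation*}
then linearize each piece of $E_{0j}$ in $\lambda$. The terms involving $H(q_j,q_j)$ and $G(q_j,q_i)$ are linearized in closed form and yield the combination $\sum_i M_{ij}\lambda_i$, where $M=D^2 I_0(b)$; I would then use (\ref{e2:42}) and (\ref{e2:46}) to identify $M = D^2 I(b) + \tfrac{2s(2s-1)}{n-2s}\mathrm{diag}(b_j^{2s-2})$, which fixes the positive spectral bound used later. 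The purely local bubble piece $\mu_j^{2s-2}\dot\mu_j(Z_{n+1}+\tfrac{n-2s}{2}\alpha_{n,s}(1+|y_j|^2)^{-(n-2s)/2})$ expands in $(\lambda_j,\dot\lambda_j)$ by elementary Taylor expansion and produces the $\dot\lambda_j$ term together with quadratic remainders $\lambda_i\lambda_l$ and $\lambda_i\dot\lambda_l$ collected into $f_{ijl}$.

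The main obstacle is the variation of the nonlocal term $\Phi^0_j(q_j,t)$: I need to compute
\begin{equation*}
\mu_0^{n-2s-1}\Phi^0_j[\bar{\mu}_0+\lambda,\,b_j\dot{\mu}_0+\dot\lambda_j](q_j,t) - \mu_0^{n-2s-1}\Phi^0_j[\bar{\mu}_0,\,b_j\dot{\mu}_0](q_j,t)
\end{equation*}
and show it equals $-(b_j\mu_0)^{2s-2}\,2sA\,\dot\lambda_j - \mu_0^{n-2s-2}(n-4s+1)b_j^{2s-2}B\,\lambda_j$ at main order. This is done by redoing the heat-kernel splitting $I_1+I_2$ that produced (\ref{e:2018330}), but now differentiating the integrand in $\lambda$ and $\dot\lambda$ before passing to the scaling variable $\hat{s}=(t-\tilde s)^{1/(2s)}/\mu_j(\tilde s)$; the dominant contribution still comes from the near-diagonal piece where $|a|^{n-2s}F(a)$ is integrable against $\hat{s}^{2s-1}$, and the constants $A$, $B$ are the same as in (\ref{e:2018330}). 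Substituting this expansion and then collecting the $\lambda_j$--coefficient together with the linearization of the $H$/$G$ terms reproduces the block starting with $\mu_{0j}\mu_0^{n-2s-2}pU^{p-1}\sum_i M_{ij}\lambda_i$ and, via the ansatz $\dot\mu_0 \sim \mu_0^{n-4s+1}$ from (\ref{e2:40}), the $\lambda_j b_j^{2s-1}$--block with its explicit coefficients, including the constant $(2s-1)B$.

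Finally, the term $\mu_j E_{1j}[\mu,\dot\xi_j]$ contributes the $\dot\xi_j\cdot y_j$ piece and the gradient-of-Green's-function piece written explicitly, and its $\lambda$--dependent pieces are of higher order hence absorbed into the $g_j,\vec g_j$ remainders. The leftover pieces $\mathcal{R}_j+A_j$ from Lemma \ref{l2.1} and (\ref{e2:56}) have exactly the shapes $\mu_0^{n-2s+2}(1+|y_j|^{4s-2})^{-1}g_j$, $\mu_0^{2n-4s}(1+|y_j|^{2s})^{-1}g_j$, $\mu_0^{n-2s}(1+|y_j|^{4s})^{-1}\vec g_j\cdot(\xi_j-q_j)$, $\mu_0^{n+2s}f$, $\mu_0^{n-1}\dot\mu_i f_i$, $\mu_0^n\dot\xi_i\cdot\vec f_i$ stated in the lemma; using $|\lambda|\le\mu_0^{1+\sigma}$ with $\sigma<\bar\sigma$ ensures that the quadratic-in-$\lambda$ terms $f_{ijl}\lambda_i\lambda_l$ and the mixed $(1+|y_j|^{n-2s})^{-1}f_{ijl}\lambda_i\dot\lambda_l$ do not interfere with the principal terms and fit into the remainder lines. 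Assembling all pieces gives the displayed expansion.
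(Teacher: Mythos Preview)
Your proposal is correct and follows essentially the same approach as the paper: starting from (\ref{sstar}) and (\ref{e2:55})--(\ref{e2:56}), substituting $\mu=\bar\mu_0+\lambda$, linearizing $E_{0j}$ in $\lambda$ (including the heat-kernel computation for the variation of $\Phi^0_j(q_j,t)$ that produces the $-2sA\dot\lambda_j$ and $-(n-4s+1)B\lambda_j$ coefficients), identifying the matrix $M=D^2I_0(b)$, expanding $\lambda_jE_{0j}[\mu,\dot\mu_j]$, and collecting $\mu_jE_{1j}$, $\mathcal R_j$ and $A_j$ into the displayed remainder lines. The paper's proof is exactly this sequence of computations, presented in the paragraphs preceding the lemma and summarized by ``Collecting all the above estimates.''
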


\section{The inner-outer gluing procedure}
We are looking for a solution to (\ref{e3:1}) with the form
\begin{equation*}
u = u^*_{\mu,\xi} + \tilde{\phi}
\end{equation*}
when $t_0$ is sufficiently large. The function $\tilde{\phi}(x, t)$ is a smaller term and we will find it by means of the \textbf{inner-outer gluing procedure}.

Let us write
\begin{equation*}
\tilde{\phi}(x, t) = \psi(x, t) + \phi^{in}(x, t)\quad\text{where}\quad\phi^{in}(x, t): = \sum_{j=1}^k\eta_{j, R}(x,t)\tilde{\phi}_j(x, t)
\end{equation*}
with
\begin{equation*}
\tilde{\phi}_j(x, t): = \mu_{0j}^{-\frac{n-2s}{2}}\phi_j\left(\frac{x-\xi_j}{\mu_{0j}}, t\right),\quad \mu_{0j}(t) = b_j\mu_0(t)
\end{equation*}
and
\begin{equation*}
\eta_{j, R}(x, t) = \eta\left(\frac{|x-\xi_j|}{R\mu_{0j}}\right).
\end{equation*}
Here $\eta(\tau)$ is a smooth cut-off function defined on $[0, +\infty)$ with $\eta(\tau) = 1$ for $0\leq \tau < 1$ and $\eta(\tau)= 0$ for $\tau > 2$. The number $R$ is defined as
\begin{equation}\label{e3:20}
R = t_0^{\rho}\quad \text{with}\quad 0 < \rho \ll 1.
\end{equation}
Problem (\ref{e3:1}) can be written with respect to $\tilde{\phi}$ as
\begin{equation}\label{e3:5}
\begin{cases}
\partial_t\tilde{\phi} = -(-\Delta)^{s}\tilde{\phi} + p(u^*_{\mu, \xi})^{p-1}\tilde{\phi} + \tilde{N}(\tilde{\phi}) + S(u^*_{\mu, \xi})&\text{in}\quad \Omega\times (t_0, \infty),\\
\tilde{\phi} = -u^*_{\mu, \xi}&\text{in}\quad (\mathbb{R}^n\setminus \Omega)\times (t_0, \infty),
\end{cases}
\end{equation}
where $\tilde{N}(\tilde{\phi})=(u_{\mu.\xi}^*+\tilde{\phi})^p-p(u_{\mu.\xi}^*)^{p-1}\tilde{\phi}-(u_{\mu.\xi}^*)^p~\mbox{and}~ S(u^*_{\mu, \xi})=-\partial_t \mu^*_{\mu, \xi}-(-\Delta)^s u^*_{\mu, \xi}+(u^*_{\mu, \xi})^p.$

According to Lemma \ref{l2.2}, we let $y_j = \frac{x-\xi_j}{\mu_j}$ and denote
\begin{equation*}
S(u^*_{\mu, \xi}) = \sum_{j=1}^kS_{\mu, \xi, j} + S^{(2)}_{\mu, \xi}
\end{equation*}
where
\begin{equation*}
\begin{aligned}
S_{\mu, \xi, j}=&\mu_j^{-\frac{n+2s}{2}}\Bigg\{\mu_{0j}(b_j\mu_0)^{2s-2}\dot{\lambda}_j\\
&\quad\quad\quad\quad\quad\times\left(Z_{n+1}(y_j) + \frac{n-2s}{2}\alpha_{n, s}\frac{1}{\left(1+|y_j|^2\right)^{\frac{n-2s}{2}}}-2sApU(y_j)^{p-1}\right)\\
&\quad\left.-\mu_{0j}\mu_0^{n-2s-2}pU(y_j)^{p-1}\sum_{i=1}^kM_{ij}\lambda_i\right.\\
&\quad +\lambda_jb_j^{2s-1}\Bigg[(2s-1)\mu_0^{2s-2}\dot{\mu}_0\left(Z_{n+1}(y_j) + \frac{n-2s}{2}\alpha_{n, s}\frac{1}{\left(1+|y_j|^2\right)^{\frac{n-2s}{2}}}\right)\\
&\quad +pU(y_j)^{p-1}\mu_0^{n-2s-1}\bigg(-b_j^{n-4s}H(q_j, q_j) + \sum_{i\neq j}b_j^{\frac{n-6s}{2}}b_i^{\frac{n-2s}{2}}G(q_j, q_i)\bigg)\\
&\quad +pU(y_j)^{p-1}\mu_0^{n-2s-1}(2s-1)B\Bigg]\\
&\quad +\mu_j\left[\mu_j^{2s-2}\alpha_{n, s}(n-2s)\frac{\dot{\xi}_j\cdot y_j}{\left(1+|y_j|^2\right)^{\frac{n-2s}{2}+1}}\right.\\
&\quad +\left.pU(y_j)^{p-1}\left(-\mu_j^{n-2s}\nabla H(q_j, q_j) + \sum_{i\neq j}\mu_j^{\frac{n-2s}{2}}\mu_i^{\frac{n-2s}{2}}\nabla G(q_j, q_i)\right)\cdot y_j\right]\Bigg\}.
\end{aligned}
\end{equation*}
Set
\begin{equation}\label{e3:7}
V_{\mu, \xi} = p\sum_{j=1}^k\left((u^*_{\mu, \xi})^{p-1} - \left[\mu_j^{-\frac{n-2s}{2}}U\left(\frac{x-\xi_j}{\mu_j}\right)\right]^{p-1}\right)\eta_{j,R} + p(1-\sum_{j=1}^k\eta_{j, R})(u^*_{\mu, \xi})^{p-1}.
\end{equation}
Then $\tilde{\phi}$ solves problem (\ref{e3:5}) if

\noindent (1) $\psi$ solves the \textbf{outer problem}
\begin{equation}\label{outerproblem}
\left\{
\begin{aligned}
&\partial_t\psi =-(-\Delta)^{s}\psi + V_{\mu, \xi}\psi \\
&\quad\quad\quad+\sum_{j=1}^k\left\{\big[-(-\Delta)^{\frac{s}{2}}\eta_{j, R},-(-\Delta)^{\frac{s}{2}}\tilde{\phi}_j\big]+ \tilde{\phi}_j\big(-(-\Delta)^{s} -\partial_t\big)\eta_{j, R}\right\}\\
&\quad\quad\quad + \tilde{N}_{\mu, \xi}(\tilde{\phi}) + S_{out},\qquad \text{ in }\Omega\times(t_0,\infty),\\
&\psi = -u^*_{\mu, \xi}\quad\text{in}\quad (\mathbb{R}^n\setminus\Omega)\times (t_0,\infty),
\end{aligned}
\right.
\end{equation}
where
\begin{equation}\label{e3:8}
S_{out} = S^{(2)}_{\mu, \xi} + \sum_{j = 1}^k(1 - \eta_{j, R})S_{\mu, \xi, j}
\end{equation}
and
\begin{equation}\label{liebracket}
\big[-(-\Delta)^{\frac{s}{2}}\eta_{j, R},-(-\Delta)^{\frac{s}{2}}\tilde{\phi}_j\big]:=C_{n,s}\int_{\mathbb{R}^n} \frac{[\eta_{j,R}(y)-\eta_{j,R}(x)][\tilde{\phi}_j(x)-\tilde{\phi}_j(y)]}{|x-y|^{n+2s}}dy.
\end{equation}

\noindent (2) $\tilde{\phi}_j$ solves
\begin{equation}\label{e3:9}
\eta_{j, R}\partial_t\tilde{\phi}_j = \eta_{j, R}\left[-(-\Delta)^s\tilde{\phi}_j + pU_j^{p-1}\tilde{\phi}_j + pU_j^{p-1}\psi + S_{\mu, \xi, j}\right]\text{ in }\mathbb{R}^n\times (t_0, \infty),
\end{equation}
for all $j = 1, \cdots, k$, $U_j := \mu_j^{-\frac{n-2s}{2}}U\left(\frac{x-\xi_j}{\mu_j}\right)$.
In terms of $\phi_j(y, t)$, (\ref{e3:9}) becomes the \textbf{inner problem}
\begin{equation}\label{e3:10}
\left\{
\begin{aligned}
&\mu_{0j}^{2s}\partial_t\phi_j = -(-\Delta)^s_y\phi_j + pU^{p-1}(y)\phi_j\\
&\quad\quad\quad\quad + \Bigg\{\mu_{0j}^{\frac{n+2s}{2}}S_{\mu, \xi, j}(\xi_j + \mu_{0j}y, t)+ p\mu_{0j}^{\frac{n-2s}{2}}\frac{\mu_{0j}^{2s}}{\mu_j^{2s}}U^{p-1}(\frac{\mu_{0j}}{\mu_j}y)\psi(\xi_j + \mu_{0j}y, t)\\
&\quad\quad\quad\quad\quad\quad + B_j[\phi_j] + B_j^0[\phi_j]\Bigg\}\chi_{B_{2R}(0)}(y)\quad\text{in}\quad \mathbb{R}^n\times (t_0, \infty),
\end{aligned}
\right.
\end{equation}
for $j = 1, \cdots, k$, where
\begin{equation}\label{e3:11}
B_j[\phi_j]:=\mu_{0j}^{2s-1}\dot{\mu}_{0j}\left(\frac{n-2s}{2}\phi_j + y\cdot\nabla_y\phi_j\right) + \mu_{0j}^{2s-1}\nabla\phi_j\cdot\dot{\xi}_j
\end{equation}
and
\begin{equation}\label{e3:12}
B_j^0[\phi_j]:= p\left[U^{p-1}\left(\frac{\mu_{0j}}{\mu_j}y\right)-U^{p-1}(y)\right]\phi_j + p\left[\mu_{0j}^{2s}(u^{*}_{\mu,\xi})^{p-1} -U^{p-1}\left(\frac{\mu_{0j}}{\mu_j}y\right)\right]\phi_j.
\end{equation}
Here $\chi_{B_{2R}(0)}(y)$ is the characteristic function of $B_{2R}(0)$.

The rest of the paper is organized as follows. In Section 4, we solve the outer problem (\ref{outerproblem}). In Section 5.1, a linear theory for the inner problem (\ref{e3:10}) is developed. We study the solvability conditions for (\ref{e3:10}) in Section 5.2 and the full problem is finally solved in Section 6.

\section{The outer problem}
In this section, we shall solve the outer problem for a given smooth function $\phi$ which is sufficiently small.
We consider the initial boundary value problem
\begin{equation}\label{e4:main}
\left\{
\begin{aligned}
&\partial_t\psi=-(-\Delta)^{s}\psi + V_{\mu, \xi}\psi\\
&\quad\quad\quad+ \sum_{j=1}^k \left\{\big[-(-\Delta)^{\frac{s}{2}}\eta_{j, R},-(-\Delta)^{\frac{s}{2}}\tilde{\phi}_j\big] + \tilde{\phi}_j\big(-(-\Delta)^{s} -\partial_t\big)\eta_{j, R}\right\}\\
&\quad\quad\quad+ \tilde{N}_{\mu, \xi}(\tilde{\phi})+ S_{out},\qquad\text{ in }\Omega\times (t_0,\infty),\\
&\psi= -u^*_{\mu, \xi}\quad\text{in}\quad(\mathbb{R}^n\setminus\Omega)\times (t_0,\infty),\\
&\psi(\cdot,t_0)=\psi_0\quad\text{in}\quad\mathbb{R}^n
\end{aligned}
\right.
\end{equation}
with a smooth and sufficiently small initial condition $\psi_0$.

\subsection{The model problem}
To solve problem (\ref{e4:main}), we first consider the linear problem
\begin{equation}\label{e4:3}
\begin{cases}
\partial_t\psi =
-(-\Delta)^{s}\psi + V_{\mu, \xi}\psi + f(x, t)\quad&\text{in}\quad\Omega\times (t_0, \infty),\\
\psi = g\quad&\text{in}\quad (\mathbb{R}^n\setminus\Omega)\times (t_0,\infty),\\
\psi(\cdot,t_0) = h\quad&\text{in}\quad\mathbb{R}^n,
\end{cases}
\end{equation}
where $f(x, t)$, $g(x, t)$ and $h(x)$ are given smooth functions and $V_{\mu, \xi}$ is defined in (\ref{e3:7}). Furthermore, we assume $f$ satisfies
\begin{equation}\label{e4:2}
|f(x, t)|\leq M\sum_{j=1}^k\frac{\mu_j^{-2s}t^{-\beta}}{1+|y_j|^{2s+\alpha}},\quad y_j = \frac{|x-\xi_j|}{\mu_j}
\end{equation}
for $\alpha$, $\beta > 0$. Denote the least $M > 0$ in (\ref{e4:2}) by $\|f\|_{*, \beta, 2s+\alpha}$.

In what follows, we use the symbol $a\lesssim b$ to denote $a\leq C b$ for a positive constant $C$ independent of $t$ and $t_0$. Then we have the following {\itshape a priori} estimate for the model problem \eqref{e4:3}.

\begin{lemma}\label{l4:lemma4.1}
Suppose that $\|f\|_{*, \beta, 2s+\alpha} < +\infty$ for some $\alpha$, $\beta > 0$, $0 < \alpha \ll 1$, $\|h\|_{L^\infty(\mathbb{R}^n)} < +\infty$ and
$\|\tau^\beta g(x, \tau)\|_{L^\infty((\mathbb{R}^n\setminus\Omega)\times (t_0,\infty))} < +\infty$. Let $\phi = \psi[f, g, h]$ be the solution of problem (\ref{e4:3}). Then there exists $\delta = \delta(\Omega) > 0$ small such that for all $(x,t)$ we have
\begin{equation}\label{e4:4}
\begin{aligned}
|\psi(x, t)|&\lesssim \|f\|_{*, \beta, 2s+\alpha}\left(\sum_{j=1}^k\frac{t^{-\beta}}{1+|y_j|^{\alpha}}\right)\\
&\quad + e^{-\delta(t-t_0)}\|h\|_{L^\infty(\mathbb{R}^n)} + t^{-\beta}\|\tau^\beta g(x, \tau)\|_{L^\infty((\mathbb{R}^n\setminus\Omega)\times (t_0,\infty))},
\end{aligned}
\end{equation}
where $y_j=\frac{x-\xi_j}{\mu_j}$. Moreover, the following H\"{o}lder estimate
\begin{equation}\label{e4:5}
[\psi(\cdot, t)]_{\eta, B_{\mu_j}(\xi_j)}\lesssim \|f\|_{*, \beta, 2s+\alpha}\left(\sum_{j=1}^k\frac{\mu_j^{-\eta}t^{-\beta}}{1+|y_j|^{\alpha+\eta}}\right)
\end{equation}
holds for some $\eta\in (0, 1)$ and $|y_j|\leq 2 R$. Here $$[\psi(\cdot, t)]_{\eta, B_{\mu_j}(\xi_j)}:= \sup_{x, y\in B_{\mu_j}(\xi_j)}\frac{|\psi(x, t) - \psi(y, t)|}{|x-y|^{\eta}}$$ is the H\"{o}lder seminorm.
\end{lemma}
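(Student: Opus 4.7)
The plan is to run a super-sub solution argument adapted to the nonlocal setting. Write the operator governing (\ref{e4:3}) as $L[\psi] := \partial_t\psi + (-\Delta)^s\psi - V_{\mu,\xi}\psi$. By the construction of $u^*_{\mu,\xi}$ together with the cutoffs $\eta_{j,R}$ entering (\ref{e3:7}), the potential $V_{\mu,\xi}$ is uniformly bounded on $\Omega\times(t_0,\infty)$, so the maximum principle for the fractional parabolic operator with bounded zero-order coefficient and Dirichlet exterior data is available. The strategy is therefore to construct a nonnegative barrier $\bar\psi(x,t)$ that (i) satisfies $L[\bar\psi]\geq |f|$ in $\Omega$, (ii) dominates $|g|$ on $\mathbb{R}^n\setminus\Omega$, (iii) dominates $|h|$ at $t=t_0$, and (iv) is itself bounded by the right-hand side of (\ref{e4:4}); then comparison gives $\pm\psi\leq\bar\psi$ and the pointwise bound follows.

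I would build $\bar\psi$ as a sum of three pieces matching the three sources. For $f$, take
\begin{equation*}
\bar\psi_1(x,t) \;=\; C_1\|f\|_{*,\beta,2s+\alpha}\sum_{j=1}^{k}\frac{t^{-\beta}}{1+|y_j|^{\alpha}}.
\end{equation*}
The decisive fact is that the fractional Laplacian of $(1+|y|^2)^{-\alpha/2}$ enjoys the two-sided asymptotics
\begin{equation*}
(-\Delta)^s_y\bigl((1+|y|^2)^{-\alpha/2}\bigr) \;\asymp\; \frac{1}{(1+|y|^2)^{(\alpha+2s)/2}}
\end{equation*}
for $0<\alpha\ll 1$, which after the change $y_j=(x-\xi_j)/\mu_j$ contributes $\gtrsim \mu_j^{-2s}t^{-\beta}(1+|y_j|)^{-(\alpha+2s)}$ to $L[\bar\psi_1]$, exactly matching the weight in (\ref{e4:2}). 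The parabolic and potential corrections $\partial_t\bar\psi_1$ and $V_{\mu,\xi}\bar\psi_1$ produce only lower-order perturbations: the first yields $\beta t^{-\beta-1}$ and, through $\dot\mu_j$, a term of order $\mu_j^{n-4s}t^{-\beta}$, while $V_{\mu,\xi}$ carries the decay of $U^{p-1}$ and is negligible outside a fixed ball. For the initial datum add $\bar\psi_2 = C_2 e^{-\delta(t-t_0)}\|h\|_{L^\infty(\mathbb{R}^n)}$, with $\delta>0$ smaller than the first Dirichlet eigenvalue of $(-\Delta)^s-V_{\mu,\xi}$ on $\Omega$, which exists and is positive uniformly in $t$ once the nondegenerate kernel is excluded by the cutoff in $V_{\mu,\xi}$. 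Finally for the boundary datum add $\bar\psi_3 = C_3 t^{-\beta}\|\tau^\beta g\|_{L^\infty}$, which clearly dominates $|g|$ on $\mathbb{R}^n\setminus\Omega$ and satisfies $L[\bar\psi_3]\geq 0$ because $-V_{\mu,\xi}+\beta t^{-1}$ is nonnegative for $t_0$ large. Choosing $C_1,C_2,C_3$ large yields the pointwise bound (\ref{e4:4}).

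For the H\"older estimate (\ref{e4:5}) I would rescale: set $\tilde\psi(z,t) = \psi(\xi_j+\mu_j z,t)$ on $B_2(0)$. Then $\tilde\psi$ solves a fractional heat equation on $B_2(0)$ with coefficient $\mu_j^{2s}V_{\mu,\xi}(\xi_j+\mu_j z,t)$ and source $\mu_j^{2s}f(\xi_j+\mu_j z,t)$, both uniformly bounded in terms of $\|f\|_{*,\beta,2s+\alpha}$ and the just-obtained $L^\infty$-bound of $\tilde\psi$ itself. The interior H\"older regularity for nonlocal parabolic operators with bounded drift and source (Silvestre-type estimates or Caffarelli-Chan-Vasseur) yields $[\tilde\psi(\cdot,t)]_{\eta,B_1(0)}\lesssim \|\tilde\psi\|_{L^\infty(B_2\times I)}+\|\mu_j^{2s}f\|_{L^\infty}$. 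Unscaling and inserting the pointwise weighted bound from the first part produces the announced $\mu_j^{-\eta}(1+|y_j|)^{-(\alpha+\eta)}$ factor.

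The main obstacle is verifying that $\bar\psi_1$ is a genuine supersolution in the transition region $|y_j|\sim 1$ where the sign of $(-\Delta)^s_y(1+|y|^2)^{-\alpha/2}$ and the $\partial_t$-term compete; this forces the smallness condition $0<\alpha\ll 1$ and $t_0$ large. A secondary technical point is handling the off-diagonal interactions between the $k$ bubbles: the barrier $\bar\psi_1$ centered at $q_j$ produces, when hit by $(-\Delta)^s$, a nonlocal tail reaching the other bubbles, but this tail is $O(\mu_j^{2s}t^{-\beta})$ and is absorbed into the leading terms near $q_i$ for $i\neq j$ since $|q_i-q_j|\geq \delta_0>0$.
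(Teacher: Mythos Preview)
Your overall architecture matches the paper exactly: a barrier argument for the pointwise bound, then a rescaling plus interior parabolic H\"older regularity for (\ref{e4:5}). The execution, however, has a real gap in how you treat the potential $V_{\mu,\xi}$.

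You assert that $V_{\mu,\xi}$ is uniformly bounded and that $-V_{\mu,\xi}+\beta t^{-1}\geq 0$ for $t_0$ large. This is false: from (\ref{e3:7}) one finds (this is the paper's (\ref{e4:11}))
\[
|V_{\mu,\xi}(x,t)|\;\lesssim\;\sum_{j}\frac{\mu_j^{-2s}R^{-2s}}{1+|y_j|^{2s}},
\]
so near a bubble center $V_{\mu,\xi}$ is of order $\mu_j^{-2s}R^{-2s}\sim t^{2s/(n-4s)}R^{-2s}$, which diverges as $t\to\infty$. Your constant barriers $\bar\psi_2,\bar\psi_3$ therefore fail to be supersolutions for the full operator $L=\partial_t+(-\Delta)^s-V_{\mu,\xi}$. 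The paper handles this by first treating the $(g,h)$ contribution through the \emph{potential-free} equation (\ref{e4:6}), using the profile $v(x)$ solving $-(-\Delta)^s v+1=0$ in $\Omega$, $v=1$ outside; only afterwards is the $f$--part analysed, and there the precise smallness in (\ref{e4:11}) is exactly what allows $V_{\mu,\xi}\bar p$ to be absorbed into $(-\Delta)^s\bar p$, since both scale like $\mu_j^{-2s}(1+|y_j|)^{-2s}$ but the potential carries the extra factor $R^{-2s}$.

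Two smaller points. First, the paper avoids your acknowledged ``main obstacle'' entirely: rather than claiming a two--sided bound for $(-\Delta)^s(1+|y|^2)^{-\alpha/2}$, it defines $p$ as the Riesz potential solution of $-(-\Delta)^s p+4q=0$ with $q=(1+|z|^{2s+\alpha})^{-1}$, so the required inequality is built in and $p\sim(1+|y|)^{-\alpha}$ follows from the kernel asymptotics. Second, for (\ref{e4:5}) you must also rescale time: with $\tilde\psi(z,\tau)=\psi(\xi_j+\mu_j z,t(\tau))$ and $\dot\tau=\mu_j^{-2s}$ the equation becomes $\partial_\tau\tilde\psi=-(-\Delta)^s_z\tilde\psi+a\cdot\nabla_z\tilde\psi+b\tilde\psi+\tilde f$ with uniformly small $a,b$, to which Silvestre's estimates apply; a purely spatial rescaling leaves a degenerate $\mu_j^{2s}\partial_t$ in front.
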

\begin{proof}
Let $\psi_0[g, h]$ be the solution of the fractional heat equation
\begin{equation}\label{e4:6}
\left\{
\begin{array}{lll}
\partial_t\psi_0 =
-(-\Delta)^{s}\psi_0&\text{ in }\Omega\times (t_0, \infty),\\
\psi_0 = g&\text{ in }(\mathbb{R}^n\setminus\Omega)\times (t_0,\infty),\\
\psi_0(\cdot,t_0) = h&\text{ in }\mathbb{R}^n.
\end{array}
\right.
\end{equation}
Let $v(x)$ be the bounded solution of $-(-\Delta)^{s}v + 1 = 0$ in $\Omega$ with $v = 1$ on $\mathbb{R}^n\setminus\Omega$. Then $v\geq 1$ in $\Omega$ and by direct computations, the function
\begin{equation*}
\bar{\psi}(x, t) = \left(e^{-\delta(t-t_0)}\|h\|_{L^\infty(\mathbb{R}^n)} + t^{-\beta}\|\tau^\beta g(x, \tau)\|_{L^\infty((\mathbb{R}^n\setminus\Omega)\times (t_0,\infty))}\right)v(x)
\end{equation*}
is a supersolution to (\ref{e4:6}) if $\delta = \delta(\Omega) > 0$ is sufficiently small. Then $|\psi_0|\leq \bar{\psi}$ by the maximum principle (see, for example, \cite{CabreSire2014Nonlinear} and \cite{cabresire2015tamsnonlinear}). Thus, it suffices to prove the estimates (\ref{e4:4}) and (\ref{e4:5}) for the case $g = 0$, $h=0$.

Let  $p(|z|)$ be the radial positive solution of the equation
\begin{equation*}
-(-\Delta)^sp + 4q = 0\text{ in }\mathbb{R}^n
\end{equation*}
with $q(|z|) = \frac{1}{1+|z|^{2s+\alpha}}$. Then by Riesz kernel, we get $p(z)\sim \frac{1}{1+|z|^{\alpha}}$. For a given sufficiently small $\delta > 0$, we have
\begin{equation*}
-(-\Delta)^sp + \frac{\delta}{1+|z|^{2s}}p+2q \leq 0\text{ in }\mathbb{R}^n.
\end{equation*}
Thus $\bar{p}(x):=\sum_{j=1}^kp\left(\frac{x-\xi_j}{\mu_j}\right)$ satisfies
\begin{equation*}
-(-\Delta)^s\bar{p} + \left(\sum_{j=1}^k\mu_j^{-2s}\frac{\delta}{1+|\frac{x-\xi_j}{\mu_j}|^{2s}}\right)\bar{p}+\frac{3}{2}\bar{q} \leq 0\text{ in }\mathbb{R}^n
\end{equation*}
with $\bar{q}:=\sum_{j=1}^k{\mu_j^{-2s}}q\left(\frac{x-\xi_j}{\mu_j}\right)$.
From the definition of $V_{\mu, \xi}$, we have
\begin{equation}\label{e4:11}
|V_{\mu, \xi}|\lesssim \sum_{j=1}^k\mu_j^{-2s}\frac{R^{-2s}}{1+|y_j|^{2s}}.
\end{equation}
For a given number $\beta > 0$, it is easy to see that $\bar{\psi}(x, t) = 2t^{-\beta}\bar{p}$ is a positive supersolution to
\begin{equation*}
\partial_t\bar{\psi} = -(-\Delta)^s\bar{\psi} + V_{\mu,\xi}\bar{\psi} + t^{-\beta}\bar{q},
\end{equation*}
i.e.,
$$\partial_t\bar{\psi} \geq -(-\Delta)^s\bar{\psi} + V_{\mu,\xi}\bar{\psi} + t^{-\beta}\bar{q}$$
for $t > t_0$ and $t_0$ is sufficiently large. Therefore, one has
\begin{equation}\label{e4:13}
|\psi(x, t)|\lesssim t^{-\beta}\|f\|_{*, \beta, 2s+\alpha}\sum_{j=1}^k\frac{1}{1+|y_j|^\alpha}.
\end{equation}
and \eqref{e4:4} is proved.

To prove (\ref{e4:5}), let
\begin{equation*}
\psi(x, t):=\tilde{\psi}\left(\frac{x-\xi_j}{\mu_j},\tau(t)\right)
\end{equation*}
where $\dot{\tau}(t) = \mu_j^{-2s}(t)$, namely $\tau(t)\thicksim t^{\frac{n-2s}{n-4s}}$. Without loss of generality, we assume $\tau(t_0)\geq 2$ by fixing $t_0$. Then $\tilde{\psi}$ satisfies
\begin{equation}\label{e4:15}
\partial_\tau\tilde{\psi} = -(-\Delta)^s\tilde{\psi} + a(z, t)\cdot \nabla_z\tilde{\psi} + b(z, t)\tilde{\psi} + \tilde{f}(z, \tau)
\end{equation}
for $|z|\leq \delta\mu_0^{-1}$ and $\tilde{f}(z, \tau) = \mu_j^{2s}f(\xi_j + \mu_jz, t(\tau))$.
The uniformly small coefficients $a(z, t)$ and $b(z, t)$ in \eqref{e4:15} are given by
\begin{equation*}
a(z, t):=\mu_j^{2s-1}\dot{\mu}_jz+\mu_j^{2s-1}\dot{\xi}_j,\quad b(z, t) = \mu_j^{2s}V_{\mu, \xi}(\xi_j+\mu_jz).
\end{equation*}
Then from assumption \eqref{e4:2} and \eqref{e4:13} we have
\begin{equation*}\label{e4:18}
|\tilde{f}(z, \tau)|\lesssim t(\tau)^{-\beta}\frac{\|f\|_{*,\beta, 2s+\alpha}}{1+|z|^{2s+\alpha}}
\end{equation*}
and
\begin{equation*}\label{e4:19}
|\tilde{\psi}(z, \tau)|\lesssim t(\tau)^{-\beta}\frac{\|f\|_{*, \beta, 2s+\alpha}}{1+|z|^\alpha}.
\end{equation*}
Now fix $0 < \eta < 1$, from the regularity estimates for parabolic integro-differential equations (see \cite{silvestreium2012differentiability}), for $\tau_1 \geq \tau(t_0)+2$, we have
\begin{equation*}
\begin{aligned}
~[\tilde{\psi}(\cdot,\tau_1)]_{\eta, B_{10}(0)} &\lesssim \|\tilde{\psi}\|_{L^\infty}+\|\tilde{f}\|_{L^\infty}\\
&\lesssim t(\tau_1-1)^{-\beta}\|f\|_{*, \beta, 2s+\alpha}\\
&\lesssim t(\tau_1)^{-\beta}\|f\|_{*, \beta, 2s+\alpha}.
\end{aligned}
\end{equation*}
Therefore, choosing an appropriate constant $c_n$ such that for any $t\geq c_nt_0$ we have
\begin{equation}\label{e4:20}
(R\mu_j)^{\eta}[\psi(\cdot,t)]_{\eta, B_{10\mu_j}(\xi_j)}\lesssim t^{-\beta}\|f\|_{*, \beta, 2s+\alpha}.
\end{equation}
By the same token,  the estimate (\ref{e4:20}) also holds for $t_0\leq t\leq c_nt_0$. Thus, (\ref{e4:5}) holds for any $t\geq t_0$. The proof is completed.
\end{proof}

\subsection{Solvability of the outer problem.}
Now we fix $\sigma$ satisfying
\begin{equation}\label{e4:29}
0 < \sigma < \bar{\sigma}\text{ where }\bar{\sigma}\leq \frac{n-2s}{2s}\bar{\sigma}_jb_j^{2-2s},\quad j = 1,\cdots,k,
\end{equation}
and $\bar{\sigma}_j$ and $b_j$ are defined in (\ref{e2:46}) and (\ref{e2:42}) respectively. Given $h(t):(t_0, \infty)\to\mathbb{R}^k$ and $\delta > 0$, we define the weighted $L^\infty$ norm as
\begin{equation*}\label{e4:30}
\|h\|_\delta:=\|\mu_0(t)^{-\delta}h(t)\|_{L^\infty(t_0, \infty)}.
\end{equation*}
In the rest of this paper, we always assume that $a$ is a positive constant satisfying $a > 2s$ and $a-2s$ is sufficiently small.
We also assume the parameters $\lambda$, $\xi$, $\dot{\lambda}$, $\dot{\xi}$ satisfy the following two constraints,
\begin{equation}\label{e4:21}
\|\dot{\lambda}(t)\|_{n-4s+1+\sigma} + \|\dot{\xi}(t)\|_{n-4s+1+\sigma}\leq \frac{c}{R^{a-2s}},
\end{equation}
\begin{equation}\label{e4:22}
\|\lambda(t)\|_{1+\sigma} + \|\xi(t)-q\|_{1+\sigma}\leq \frac{c}{R^{a-2s}},
\end{equation}
where $c$ is a positive constant independent of $t$, $t_0$ and $R$.

Denote
\begin{equation*}\label{e4:23}
\|\phi\|_{n-2s+\sigma,a}=\max_{j=1,\cdots, k}\|\phi_j\|_{n-2s+\sigma,a},
\end{equation*}
where $\|\phi_j\|_{n-2s+\sigma,a}$ is defined as the least number $M$ such that
\begin{equation}\label{e4:24}
(1+|y|)|\nabla_y \phi_j(y, t)|\chi_{B_{2R}(0)}(y) + |\phi_j(y, t)|\leq M\frac{\mu_0^{n-2s+\sigma}}{1+|y|^a},\quad j= 1,\cdots,k
\end{equation}
holds. Suppose $\phi = (\phi_1,\cdots, \phi_k)$ satisfies
\begin{equation}\label{e4:25}
\|\phi\|_{n-2s+\sigma,a}\leq ct_0^{-\varepsilon}
\end{equation}
for some small $\varepsilon > 0$.
Then we have the following proposition.
\begin{prop}\label{p4:4.1}
Assume $\lambda$, $\xi$, $\dot{\lambda}$, $\dot{\xi}$ satisfy (\ref{e4:21}) and (\ref{e4:22}), $\phi = (\phi_1,\cdots, \phi_k)$ satisfies (\ref{e4:25}), $\psi_0\in C^2(\mathbb{R}^n)$ and we have
\begin{equation*}\label{e4:26}
\|\psi_0\|_{L^\infty(\mathbb{R}^n)} + \|\nabla\psi_0\|_{L^\infty(\mathbb{R}^n)}\leq \frac{t_0^{-\varepsilon}}{R^{a-2s}}.
\end{equation*}
Then there exists $t_0$ sufficiently large such that the outer problem (\ref{e4:main}) has a unique solution $\psi = \Psi[\lambda, \xi, \dot{\lambda}, \dot{\xi}, \phi]$. Moreover, there exists $\sigma$ satisfying \eqref{e4:29} and $\varepsilon > 0$ small such that, for $y_j=\frac{x-\xi_j}{\mu_{0j}}$,
\begin{equation}\label{e4:27}
|\psi(x, t)|\lesssim \frac{t_0^{-\varepsilon}}{R^{a-2s}}\sum_{j=1}^k\frac{\mu_0^{\frac{n-2s}{2}+\sigma}(t)}{1+|y_j|^{a-2s}} + e^{-\delta(t-t_0)}\|\psi_0\|_{L^{\infty}(\mathbb{R}^n)},
\end{equation}
and
\begin{equation}\label{e4:28}
[\psi(x, t)]_{\eta, B_{\mu_jR}(\xi_j)}\lesssim \frac{t_0^{-\varepsilon}}{R^{a-2s}}\sum_{j=1}^k\frac{\mu_j^{-\eta}\mu_0^{\frac{n-2s}{2}+\sigma}(t)}{1+|y_j|^{a-2s+\eta}}\text{ for } |y_j|\leq 2 R,
\end{equation}
where $R$, $\rho$ are defined in \eqref{e3:20} and $\eta\in (0, 1)$.
\end{prop}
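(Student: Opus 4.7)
The plan is to set up \eqref{e4:main} as a fixed-point problem whose contraction estimate follows from the a priori bound in Lemma \ref{l4:lemma4.1}. I would rewrite \eqref{e4:main} as
\begin{equation*}
\partial_t\psi = -(-\Delta)^s\psi + V_{\mu,\xi}\psi + \mathcal{F}[\psi,\phi],\qquad \psi|_{\R^n\setminus\Omega}=-u^*_{\mu,\xi},\qquad \psi(\cdot,t_0)=\psi_0,
\end{equation*}
where $\mathcal F[\psi,\phi]$ collects $S_{out}$, the two cut-off / commutator terms, and the nonlinear term $\tilde N_{\mu,\xi}(\psi+\phi^{in})$. Fixing the weight $\alpha=a-2s$ and $\beta=\frac{(n-2s)/2+\sigma}{n-4s}$ (so that $t^{-\beta}\sim \mu_0^{(n-2s)/2+\sigma}$), I would work in the closed ball
\begin{equation*}
\mathcal B_\ast=\Big\{\psi\in L^\infty:\ |\psi(x,t)|\le C_\ast \tfrac{t_0^{-\varepsilon}}{R^{a-2s}}\sum_{j=1}^k\tfrac{\mu_0^{(n-2s)/2+\sigma}(t)}{1+|y_j|^{a-2s}}+e^{-\delta(t-t_0)}\|\psi_0\|_\infty\Big\},
\end{equation*}
and define $\mathcal T(\tilde\psi)=\psi$ as the unique solution to the linear problem \eqref{e4:3} with source $f=\mathcal F[\tilde\psi,\phi]$ and boundary/initial data as above, produced by Lemma \ref{l4:lemma4.1}.

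The crux is to verify $\|\mathcal F[\tilde\psi,\phi]\|_{\ast,\beta,2s+\alpha}\lesssim R^{-(a-2s)}t_0^{-\varepsilon}$, together with a Lipschitz version. For $S_{out}$: in the exterior region $|x-q_j|>\delta$ the bound \eqref{awayq} gives decay $\mu_0^{(n-2s)/2-1}|\dot\mu|+\cdots$ which trivially fits into the $*$-norm with room to spare; for $(1-\eta_{j,R})S_{\mu,\xi,j}$, the cut-off confines $x$ to $|y_j|\ge R$, so the ambient decay $(1+|y_j|)^{-(2s+\alpha)}$ of $S_{\mu,\xi,j}$ (read off from Lemma \ref{l2.2}) trades $R^{-(a-2s)}$ via $(1+|y_j|)^{-(2s+\alpha)}\chi_{|y_j|\ge R}\le R^{-(a-2s)}(1+|y_j|)^{-2s}$; the prefactors \eqref{e4:21}-\eqref{e4:22} then yield the $t_0^{-\varepsilon}$ gain. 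For the two commutator/transport terms involving $\eta_{j,R}$: both are supported in the annulus $R\mu_{0j}\le|x-\xi_j|\le 2R\mu_{0j}$ where $|\nabla\eta_{j,R}|\le (R\mu_{0j})^{-1}$ and $|(-\Delta)^s\eta_{j,R}|\lesssim(R\mu_{0j})^{-2s}$; combining with the inner assumption \eqref{e4:24} on $\phi_j$, each term is bounded by a constant times $R^{-(a-2s)}t_0^{-\varepsilon}\mu_j^{-2s}\mu_0^{(n-2s)/2+\sigma}(1+|y_j|)^{-(2s+\alpha)}$ in the annulus, which again fits the $\ast$-norm. Finally $\tilde N_{\mu,\xi}(\tilde\phi)$ is quadratic in $\tilde\phi=\tilde\psi+\phi^{in}$, so its $\ast$-norm is controlled by $(t_0^{-\varepsilon}R^{-(a-2s)})^2$, which is subleading; its Lipschitz norm in $\tilde\psi$ is $O(t_0^{-\varepsilon}R^{-(a-2s)})$, giving contraction for $t_0$ large.

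Having these bounds, Lemma \ref{l4:lemma4.1} yields $\|\mathcal T(\tilde\psi)\|_{\mathcal B_\ast}\le C_\ast t_0^{-\varepsilon}R^{-(a-2s)}$ if $C_\ast$ is chosen large and $t_0$ large; similarly $\mathcal T$ is a contraction on $\mathcal B_\ast$ in the weighted $L^\infty$ norm, so the Banach fixed-point theorem produces a unique $\psi=\Psi[\lambda,\xi,\dot\lambda,\dot\xi,\phi]\in\mathcal B_\ast$ solving \eqref{e4:main}. Estimate \eqref{e4:27} is then the membership in $\mathcal B_\ast$; the H\"older bound \eqref{e4:28} follows by applying the second part of Lemma \ref{l4:lemma4.1} to the same source term, once we observe that $\mathcal F[\psi,\phi]$ satisfies the $\ast$-norm bound globally, hence \eqref{e4:5} applies on balls $B_{\mu_jR}(\xi_j)$ with the correct scaling.

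The main obstacle I anticipate is the careful pointwise estimate of the commutator $[(-\Delta)^{s/2}\eta_{j,R},(-\Delta)^{s/2}\tilde\phi_j]$ and the nonlocal self-interaction $\tilde\phi_j(-\Delta)^s\eta_{j,R}$: unlike in the local case $s=1$, these are not supported in the annulus but only decay away from it, and one has to split the defining singular integral into a near-diagonal piece (controlled by the Lipschitz bound on $\tilde\phi_j$ inherited from \eqref{e4:24} via interior regularity) and a far piece (controlled by the polynomial decay of $\tilde\phi_j$ and of the Riesz-type kernel of $(-\Delta)^s\eta_{j,R}$). It is precisely to absorb the slow tail of $(-\Delta)^s\eta_{j,R}$ that the hypothesis $a>2s$ is needed, so that $(1+|y|)^{-a}$ is integrable at infinity against $(1+|y|)^{-(n-2s)}$ kernels; this is the same integrability constraint that forces the $R^{-(a-2s)}$ loss in \eqref{e4:27}.
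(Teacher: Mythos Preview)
Your overall strategy is exactly the paper's: split off the boundary/initial contribution (the paper calls it $\psi_1=T(0,-u^*_{\mu,\xi},\psi_0)$), then run a contraction on the ball $\mathcal B=\{\psi:\|\psi\|_{**,\beta,a}\le M t_0^{-\varepsilon}/R^{a-2s}\}$ using Lemma~\ref{l4:lemma4.1}, with the three source estimates you list (for $S_{out}$, the cut-off/commutator terms, and $\tilde N_{\mu,\xi}$) as the key input. Your weight choice $\alpha=a-2s$, $\beta=\frac{(n-2s)/2+\sigma}{n-4s}$ matches the paper.

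The one genuine gap is the estimate for the commutator $\big[-(-\Delta)^{s/2}\eta_{j,R},-(-\Delta)^{s/2}\tilde\phi_j\big]$ and for $\tilde\phi_j(-\Delta)^s\eta_{j,R}$. Your main argument treats them as if they were supported in the annulus $R\le|y_j|\le 2R$; you then correctly flag in the final paragraph that this is false for $s<1$, but you do not supply the replacement. The paper's resolution is simpler than the near/far splitting you anticipate: for the commutator one applies Cauchy--Schwarz directly to the defining double integral,
\[
\Big|\big[-(-\Delta)^{s/2}\eta_{j,R},-(-\Delta)^{s/2}\tilde\phi_j\big](x)\Big|
\lesssim
\Big(\int_{\R^n}\tfrac{|\eta_{j,R}(x)-\eta_{j,R}(y)|^2}{|x-y|^{n+2s}}\,dy\Big)^{1/2}
\Big(\int_{\R^n}\tfrac{|\tilde\phi_j(x)-\tilde\phi_j(y)|^2}{|x-y|^{n+2s}}\,dy\Big)^{1/2},
\]
which after rescaling gives a factor $R^{-s}\mu_{0j}^{-s}$ from the first bracket and $\mu_{0j}^{-s}\mu_0^{(n-2s)/2+\sigma}(1+|y_j|)^{-s-a}\|\phi\|_{n-2s+\sigma,a}$ from the second (here the gradient bound in \eqref{e4:24} is what controls the second Gagliardo integral). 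For $\tilde\phi_j(-\Delta)^s\eta_{j,R}$ one uses the global pointwise decay $|(-\Delta)^s\eta(\cdot)|\lesssim(1+|\cdot|)^{-n-2s}$ rather than compact support. Both then fit into the $\|\cdot\|_{*,\beta,2s+\alpha}$ norm with the required $R^{-(a-2s)}$ prefactor. A second minor point: $\tilde N_{\mu,\xi}$ is only ``quadratic'' when $p\ge 2$, i.e.\ $n\le 6s$; for $n>6s$ the paper uses $|\tilde N|\lesssim|\tilde\phi|^p$, which is of course even smaller.
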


Proposition \ref{p4:4.1} states that for any small initial conditions $\psi_0$, a solution $\psi$ to (\ref{e4:main}) exists.  Moreover, it clarifies the dependence of $\Psi[\lambda, \xi, \dot{\lambda}, \dot{\xi}, \phi]$ in the parameters $\lambda, \xi, \dot{\lambda}, \dot{\xi}, \phi$, which is proved by estimating, for example,
\begin{equation*}\label{e4:31}
\partial_\phi\Psi[\lambda, \xi, \dot{\lambda}, \dot{\xi}, \phi][\bar{\phi}] = \partial_s\Psi[\lambda, \xi, \dot{\lambda}, \dot{\xi}, \phi+s\bar{\phi}]|_{s=0}
\end{equation*}
as a linear operator between parameter Banach spaces. For simplicity, we denote the above operator by $\partial_\phi\Psi[\bar{\phi}]$. Similarly, we have $\partial_\lambda\Psi[\bar{\lambda}]$, $\partial_\xi\Psi[\bar{\xi}]$, $\partial_{\dot{\lambda}}\Psi[\dot{\bar{\lambda}}]$, $\partial_{\dot{\xi}}\Psi[\dot{\bar{\xi}}]$.

\begin{proof}
Lemma \ref{l4:lemma4.1} defines a linear operator $T$ which associates the solution $\psi = T(f,g,h)$ of problem (\ref{e4:3}) to any given functions $f(x, t)$, $g(x, t)$ and $h(x)$. Denote $\psi_1(x, t) := T(0, -u^*_{\mu, \xi}, \psi_0)$. From (\ref{e2:52}), (\ref{e2:3}) and (\ref{e2:51}), for any $x\in\mathbb{R}^n\setminus\Omega$ we have
\begin{equation}\label{e4:32}
|u^*_{\mu,\xi}(x,t)|\lesssim \mu_0^{\frac{n+2s}{2}}(t).
\end{equation}
By Lemma \ref{l4:lemma4.1}, we have
\begin{equation*}\label{e4:33}
|\psi_1|\lesssim e^{-\delta(t-t_0)}\|\psi_0\|_{L^\infty(\mathbb{R}^n)} + t^{-\beta}\mu_0(t_0)^{2s-\sigma}\text{ where }\beta = \frac{n-2s}{2(n-4s)}+\frac{\sigma}{n-4s}.
\end{equation*}
Therefore, the function $\psi+\psi_1$ is a solution to (\ref{e4:main}) if $\psi$ is a fixed point for the operator
\begin{equation*}\label{e4:34}
\mathcal{A}(\psi):=T(f(\psi),0,0),
\end{equation*}
where
\begin{equation}\label{e4:35}
f(\psi)= \sum_{j=1}^k\left\{\big[-(-\Delta)^{\frac{s}{2}}\eta_{j, R},-(-\Delta)^{\frac{s}{2}}\tilde{\phi}_j\big] + \tilde{\phi}_j\big(-(-\Delta)^{s} -\partial_t\big)\eta_{j, R}\right\} + \tilde{N}_{\mu, \xi}(\tilde{\phi}) + S_{out}.
\end{equation}
By the Contraction Mapping Theorem, we will prove the existence of a fixed point $\psi$ for $\mathcal{A}$ in the following function space
\begin{equation*}\label{e4:36}
\|\psi\|_{**,\beta,a}\text{ is bounded with }\beta = \frac{n-2s}{2(n-4s)}+\frac{\sigma}{n-4s}.
\end{equation*}
Here $\|\psi\|_{**,\beta,a}$ is the least $M > 0$ such that the following inequality holds
$$
|\psi(x, t)|\leq M\sum_{j=1}^k\frac{t^{-\beta}}{1+|y_j|^{a-2s}},\quad y_j = \frac{|x-\xi_j|}{\mu_j}.
$$

As a first step, we establish the following estimates.
\begin{itemize}
\item[(1)] Estimate for $S_{out}(x, t)$:
\begin{equation}\label{e4:37}
|S_{out}(x, t)|\lesssim \frac{t_0^{-\varepsilon}}{R^{a-2s}}\sum_{j=1}^k\frac{\mu_j^{-2s}\mu_0^{\frac{n-2s}{2}+\sigma}(t)}{1+|y_j|^{a}}.
\end{equation}
\item[(2)] Estimate for $\sum_{j=1}^k\left\{\big[-(-\Delta)^{\frac{s}{2}}\eta_{j, R},-(-\Delta)^{\frac{s}{2}}\tilde{\phi}_j\big] + \tilde{\phi}_j\big(-(-\Delta)^{s} -\partial_t\big)\eta_{j, R}\right\}$:
\begin{equation}\label{e4:38}
\begin{aligned}
&\left|\sum_{j=1}^k\left\{\big[-(-\Delta)^{\frac{s}{2}}\eta_{j, R},-(-\Delta)^{\frac{s}{2}}\tilde{\phi}_j\big]+ \tilde{\phi}_j\big(-(-\Delta)^{s} -\partial_t\big)\eta_{j, R}\right\}\right|\\
&\quad\quad\lesssim \frac{1}{R^{a-2s}}\|\phi\|_{n-2s+\sigma,a}\sum_{j=1}^k\frac{\mu_j^{-2s}\mu_0^{\frac{n-2s}{2}+\sigma}(t)}{1+|y_j|^{a}}.
\end{aligned}
\end{equation}
\item[(3)] Estimate for $\tilde{N}_{\mu, \xi}(\tilde{\phi})$:
\begin{equation}\label{e4:39}
\begin{aligned}
&\tilde{N}_{\mu, \xi}(\tilde{\phi}) \lesssim\\
&\left\{
\begin{aligned}
    t_0^{-\varepsilon}(\|\phi\|^2_{n-2s+\sigma,a}+\|\psi\|^2_{**,\beta,a})\frac{1}{R^{a-2s}}\sum_{j=1}^k\frac{\mu_j^{-2s}\mu_0^{\frac{n-2s}{2}+\sigma}(t)}{1+|y_j|^{a}},  & \quad \text{when } 6s \geq n,\\
    t_0^{-\varepsilon}(\|\phi\|^p_{n-2s+\sigma,a}+\|\psi\|^p_{**,\beta,a})\frac{1}{R^{a-2s}}\sum_{j=1}^k\frac{\mu_j^{-2s}\mu_0^{\frac{n-2s}{2}+\sigma}(t)}{1+|y_j|^{a}},       & \quad \text{when } 6s < n.\\
  \end{aligned}
\right.
\end{aligned}
\end{equation}
\end{itemize}

{\it Proof of (\ref{e4:37})}. Recall from \eqref{e3:8} that
\begin{equation*}\label{e4:41}
S_{out} = S^{(2)}_{\mu, \xi} + \sum_{j = 1}^k(1 - \eta_{j, R})S_{\mu, \xi, j}.
\end{equation*}
By (\ref{awayq}) and Lemma \ref{l2.2}, in the region $|x-q_j |>\delta$ with $\delta > 0$ small, $S_{out}$ can be estimated for all $j$ as
\begin{equation}\label{e4:42}
|S_{out}(x, t)|\lesssim \mu_0^{\frac{n-2s}{2}}(\mu_0^{2s}+\mu_0^{n-4s})\lesssim \mu_0^{\min(n-4s,2s)-(a-2s)-\sigma}(t_0)\sum_{j=1}^k\frac{\mu_j^{-2s}\mu_0^{\frac{n-2s}{2}+\sigma}}{1+|y_j|^{a}}.
\end{equation}
Now we consider the region $|x-q_j| \leq \delta$ with $\delta > 0$ small, where $j\in \{1,\cdots, k\}$ is fixed. Lemma \ref{l2.2} implies that
\begin{equation}\label{e4:43}
\left|S^{(2)}_{\mu,\xi}(x, t)\right|\lesssim \mu_0^{-\frac{n+2s}{2}}\frac{\mu_0^{n-2s+2}}{1+|y_j|^{4s-2}}\lesssim  \mu_0^{2s-(a-2s)-\sigma}(t_0)\sum_{j=1}^k\frac{\mu_j^{-2s}\mu_0^{\frac{n-2s}{2}+\sigma}}{1+|y_j|^{a}}.
\end{equation}
From the definition of $\eta_{j, R}$, $(1-\eta_{j, R})\neq 0$ if $|x-\xi_j|>\mu_0R$. Therefore, in the region $|x-q_j| < \delta$,
\begin{equation}\label{e4:44}
\left|(1 - \eta_{j, R})S_{\mu, \xi, j}\right|\lesssim \left(\frac{1}{R^{n-2s-a}}+\frac{1}{R^{4s-a}}\right)\frac{1}{R^{a-2s}}\sum_{j=1}^k\frac{\mu_j^{-2s}\mu_0^{\frac{n-2s}{2}+\sigma}}{1+|y_j|^{a}}.
\end{equation}
Here we have used the decaying assumptions \eqref{e4:21} and \eqref{e4:22} for $\lambda$ and $\xi$, respectively. Thus, (\ref{e4:37}) is valid.

{\it Proof of (\ref{e4:38})}. First, we consider $\big[-(-\Delta)^{\frac{s}{2}}\eta_{j, R},-(-\Delta)^{\frac{s}{2}}\tilde{\phi}_j\big]$ for $j$ fixed. Recall that
$$\tilde{\phi}_j(x, t): = \mu_{0j}^{-\frac{n-2s}{2}}\phi_j\left(\frac{x-\xi_j}{\mu_{0j}}, t\right).$$
From the assumptions \eqref{e4:24} and \eqref{e4:25}, we obtain
\begin{equation}\label{4.46}
\begin{aligned}
&\bigg|\left(\big[-(-\Delta)^{\frac{s}{2}}\eta_{j, R},-(-\Delta)^{\frac{s}{2}}\tilde{\phi}_j\big]\right)(x, t)\bigg|\\
&\lesssim \left[\int_{\mathbb{R}^n} \left(\frac{\eta_{j,R}(x)-\eta_{j,R}(y)}{|x-y|^{\frac{n}{2}+s}}\right)^2dy\right]^{\frac{1}{2}}\left[\int_{\mathbb{R}^n} \left(\frac{\tilde{\phi}_{j}(x)-\tilde{\phi}_{j}(y)}{|x-y|^{\frac{n}{2}+s}}\right)^2dy\right]^{\frac{1}{2}}\\
&\lesssim \frac{1}{R^s\mu_{0j}^s}\left[\int_{\mathbb{R}^n} \left(\frac{\eta(|\frac{x-\xi_j}{R\mu_{0j}}|)-\eta(|\frac{y-\xi_j}{R\mu_{0j}}|)}
{|\frac{x-y}{R\mu_{0j}}|^{\frac{n}{2}+s}}\right)^2d\left(\frac{y-\xi_j}{R\mu_{0j}}\right)\right]^{\frac{1}{2}}\\
&\quad\times\frac{\mu_0^{-\frac{n-2s}{2}}}{\mu_{0j}^s}\left[\int_{\mathbb{R}^n} \left(\frac{{\phi}_{j}(\frac{x-\xi_j}{\mu_{0j}},t)-{\phi}_{j}(\frac{y-\xi_j}{\mu_{0j}},t)}{|\frac{x-y}
{\mu_{0j}}|^{\frac{n}{2}+s}}\right)^2d\left(\frac{y-\xi_j}{\mu_{0j}}\right)\right]^{\frac{1}{2}}\\
&\lesssim \frac{1}{R^s\mu_{0j}^{2s}}\left[\int_{\mathbb{R}^n} \left(\frac{\eta(|\frac{x-\xi_j}{R\mu_{0j}}|)-\eta(|\frac{y-\xi_j}{R\mu_{0j}}|)}
{|\frac{x-y}{R\mu_{0j}}|^{\frac{n}{2}+s}}\right)^2d\left(\frac{y-\xi_j}{R\mu_{0j}}\right)\right]^{\frac{1}{2}}\frac{\mu_0^{\frac{n-2s}{2}+\sigma}}{(1+|y_j|^{s+a})}
\|\phi\|_{n-2s+\sigma,a}\\
&\lesssim \frac{1}{R^{a-2s}}\|\phi\|_{n-2s+\sigma,a}\sum_{j=1}^k\frac{\mu_j^{-2s}\mu_0^{\frac{n-2s}{2}+\sigma}(t)}{1+|y_j|^{a}}.
\end{aligned}
\end{equation}
Now let us consider the second term $\tilde{\phi}_j\big(-(-\Delta)^{s} -\partial_t\big)\eta_{j, R}$. From direct computations, we have
\begin{equation}\label{e4:46}
\begin{aligned}
\left|\tilde{\phi}_j\big(-(-\Delta)^{s} -\partial_t\big)\eta_{j, R}\right|\lesssim & \frac{\left|-(-\Delta)^{s}\eta\left(|\frac{x-\xi_j}{R\mu_{0j}}|\right)\right|}{R^{2s}\mu_{0j}^{2s}}\mu_0^{-\frac{n-2s}{2}}|\phi_j|\\
&+\left|\eta'\left(|\frac{x-\xi_j}{R\mu_{0j}}|\right)\left(\frac{|x-\xi_j|}{R\mu_0^2}\dot{\mu_0}+\frac{1}{R\mu_0}\dot{\xi}\right)\right|\mu_0^{-\frac{n-2s}{2}}|\phi_j|.
\end{aligned}
\end{equation}
For the first term in the right hand side of (\ref{e4:46}), by the definition of $\tilde{\phi}_j$, we obtain
\begin{equation}\label{e4:47}
\begin{aligned}
\frac{\left|-(-\Delta)^{s}\eta\left(|\frac{x-\xi_j}{R\mu_{0j}}|\right)\right|}{R^{2s}\mu_{0j}^{2s}}\mu_0^{-\frac{n-2s}{2}}|\phi_j|& \lesssim  \frac{\left|-(-\Delta)^{s}\eta\left(|\frac{x-\xi_j}{R\mu_{0j}}|\right)\right|}{R^{2s}\mu_{0j}^{2s}}\frac{\mu_0^{\frac{n-2s}{2}+\sigma}}{(1+|y_j|^a)}\|\phi\|_{n-2s+\sigma,a}\\
&\lesssim \frac{1}{R^{a-2s}}\|\phi\|_{n-2s+\sigma,a}\sum_{j=1}^k\frac{\mu_j^{-2s}\mu_0^{\frac{n-2s}{2}+\sigma}(t)}{1+|y_j|^{a}},
\end{aligned}
\end{equation}
where we have used the fact that $\left|-(-\Delta)^{s}\eta\left(|\frac{x-\xi_j}{R\mu_{0j}}|\right)\right|\sim \frac{1}{1+|\frac{y_j}{R}|^{2s}}$. From \eqref{e2:40} and \eqref{e4:21}, the second term in the right hand side of (\ref{e4:46}) can be estimated as
\begin{equation*}
\begin{aligned}
&\left|\eta'\left(|\frac{x-\xi_j}{R\mu_{0j}}|\right)\left(\frac{|x-\xi_j|\dot{\mu_0}+\mu_0\dot{\xi}}{R\mu_0^2}\right)\right|\mu_0^{-\frac{n-2s}{2}}|\phi_j|\\
\end{aligned}
\end{equation*}
\begin{equation}\label{e4:48}
\begin{aligned}
&\quad\quad\quad\quad\quad\lesssim \frac{\left|\eta'\left(|\frac{x-\xi_j}{R\mu_{0j}}|\right)\right|}{R^{2s}\mu_{0j}^{2s}}(\mu_0^{n-2s}R^{2s} +\mu_0^{n-2s+\sigma}R^{2s-1})\mu_0^{-\frac{n-2s}{2}}|\phi_j|\\
&\quad\quad\quad\quad\quad\lesssim \frac{1}{R^{a-2s}}\|\phi\|_{n-2s+\sigma,a}\sum_{j=1}^k\frac{\mu_j^{-2s}\mu_0^{\frac{n-2s}{2}+\sigma}(t)}{1+|y_j|^{a}}.
\end{aligned}
\end{equation}
From \eqref{4.46}-\eqref{e4:48}, we get (\ref{e4:38}).

{\it Proof of (\ref{e4:39})}. Since $p-2\geq 0$ gives $6s\geq n$, we have
\begin{equation}\label{e4:49}
\begin{aligned}
&\tilde{N}_{\mu, \xi}(\psi + \psi_1 + \sum_{j=1}^k\eta_{j, R}\tilde{\phi}_j)\lesssim\\
&\quad\quad\quad\quad\quad\left\{
\begin{aligned}
&(u^*_{\mu, \xi})^{p-2}\left[|\psi|^2 + |\psi_1|^2 + \sum_{j=1}^k|\eta_{j, R}\tilde{\phi}_j|^2\right], & \quad \mbox{when}~ 6s\geq n,\\
&|\psi|^p + |\psi_1|^p + \sum_{j=1}^k|\eta_{j, R}\tilde{\phi}_j|^p, & \quad \mbox{when}~ 6s< n.
\end{aligned}
\right.
\end{aligned}
\end{equation}
When $6s\geq n$, we have
\begin{equation*}\label{e4:51}
\begin{aligned}
\left|(u^*_{\mu, \xi})^{p-2}(\eta_{j, R}\tilde{\phi}_j)^2\right|&\lesssim \frac{\mu_0^{\frac{3n}{2}-5s+2\sigma}}{1+|y_j|^{2a}}\|\phi\|^2_{n-2s+\sigma, a}\\
&\lesssim \mu_0^{n-2s+\sigma}R^{a-2s}\|\phi\|^2_{n-2s+\sigma, a}\frac{1}{R^{a-2s}}\sum_{j=1}^k\frac{\mu_j^{-2s}\mu_0^{\frac{n-2s}{2}+\sigma}}{1+|y_j|^{a}}
\end{aligned}
\end{equation*}
and
\begin{equation*}\label{e4:52}
\begin{aligned}
\left|(u^*_{\mu, \xi})^{p-2}\psi^2\right|&\lesssim\mu_0^{-\frac{6s-n}{2}}\frac{t^{-2\beta}}{1+|y_j|^{2(a-2s)}}\|\psi\|^2_{**,\beta, a}\\
&\lesssim R^{a-2s}\mu_0^{n-4s+\sigma + a -2s}\|\psi\|^2_{**,\beta,a}\frac{1}{R^{a-2s}}\sum_{j=1}^k\frac{\mu_j^{-2s}t^{-\beta}}{1+|y_j|^{a}}.
\end{aligned}
\end{equation*}
When $6s < n$, we have
\begin{equation*}\label{e4:51}
\begin{aligned}
\left|\eta_{j, R}\tilde{\phi}_j\right|^p&\lesssim \frac{\mu_0^{(\frac{n-2s}{2}+\sigma)p}}{1+|y_j|^{ap}}\|\phi\|^p_{n-2s+\sigma, a}\\
&\lesssim \mu_0^{2s+(p-1)\sigma}R^{a-2s}\mu_0^{2s}\|\phi\|^p_{n-2s+\sigma, a}\frac{1}{R^{a-2s}}\sum_{j=1}^k\frac{\mu_j^{-2s}\mu_0^{\frac{n-2s}{2}+\sigma}}{1+|y_j|^{a}},
\end{aligned}
\end{equation*}
and
\begin{equation*}\label{e4:52}
\begin{aligned}
\left|\psi\right|^p&\lesssim\frac{t^{-p\beta}}{1+|y_j|^{p(a-2s)}}\|\psi\|^p_{**,\beta, a}\\
&\lesssim \mu^{4s(1+\frac{\sigma}{n-2s})+p(a-2s)-a} R^{a-2s}\|\psi\|^p_{**,\beta,a}\frac{1}{R^{a-2s}}\sum_{j=1}^k\frac{\mu_j^{-2s}\mu_0^{\frac{n-2s}{2}+\sigma}}{1+|y_j|^{a}}.
\end{aligned}
\end{equation*}
The estimates for $\psi_1$ are similar. Hence we have (\ref{e4:39}).

Now we apply the Contraction Mapping Theorem to prove the existence of a fixed point $\psi$ for $\mathcal{A}$.
First, set
\begin{equation*}\label{e4:54}
\mathcal{B}=\left\{\psi:\|\psi\|_{**, \beta, a}\leq M\frac{t_0^{-\varepsilon}}{R^{a-2s}}\right\}
\end{equation*}
with $\beta = \frac{n-2s}{2(n-4s)}+\frac{\sigma}{n-4s}$ and $a$, $\varepsilon$ are fixed as above. Here the positive large constant $M$ is independent of $t$ and $t_0$.
For any $\psi\in \mathcal{B}$, $\mathcal{A}(\psi)\in \mathcal{B}$ as a consequence of \eqref{e4:35} and the estimates \eqref{e4:37}-\eqref{e4:39}. We claim that for any $\psi_1$, $\psi_2\in\mathcal{B}$,
\begin{equation*}\label{e4:55}
\|\mathcal{A}(\psi^{(1)}) - \mathcal{A}(\psi^{(2)})\|_{**,\beta, a}\leq C\|\psi^{(1)}-\psi^{(2)}\|_{**,\beta, a},
\end{equation*}
where $C<1$ is a constant depending on $t_0$ which is chosen sufficiently large. Indeed,
\begin{equation*}\label{e4:56}
\mathcal{A}(\psi^{(1)}) - \mathcal{A}(\psi^{(2)})=T\left(\tilde{N}_{\mu, \xi}(\psi^{(1)}+\psi_1+\phi^{in}) - \tilde{N}_{\mu, \xi}(\psi^{(2)}+\psi_1+\phi^{in}),0,0\right),
\end{equation*}
where
\begin{equation*}\label{e4:57}
\begin{aligned}
&\tilde{N}_{\mu, \xi}(\psi^{(1)}+\psi_1+\phi^{in}) - \tilde{N}_{\mu, \xi}(\psi^{(2)}+\psi_1+\phi^{in}) = \\
&\left(u^*_{\mu,\xi}+\psi^{(1)}+\psi_1+\phi^{in}\right)^p-\left(u^*_{\mu,\xi}+\psi^{(1)}+\psi_1+\phi^{in}\right)^p-p(u^*_{\mu,\xi})^{p-1}
\left[\psi^{(1)}-\psi^{(2)}\right].
\end{aligned}
\end{equation*}
Similar to \eqref{e4:49}, we have
\begin{equation*}\label{e4:58}
\begin{aligned}
&\left|\tilde{N}_{\mu, \xi}(\psi^{(1)}+\psi_1+\phi^{in}) - \tilde{N}_{\mu, \xi}(\psi^{(2)}+\psi_1+\phi^{in})\right|\lesssim\\
&\quad\quad\quad\quad\quad\quad\quad\quad\quad\quad\quad\quad\quad\quad\quad\left\{
\begin{aligned}
& (u^*_{\mu,\xi})^{p-2}|\phi^{in}||\psi^{(1)}- \psi^{(2)}|, & \quad \mbox{when}~ 6s\geq n,\\
&|\phi^{in}|^{p-1}|\psi^{(1)}- \psi^{(2)}|, & \quad \mbox{when}~ 6s< n.
\end{aligned}
\right.
\end{aligned}
\end{equation*}
When $6s \geq n$,
\begin{equation*}\label{e4:60}
\begin{aligned}
&\left|\tilde{N}_{\mu, \xi}(\psi^{(1)}+\psi_1+\phi^{in}) - \tilde{N}_{\mu, \xi}(\psi^{(2)}+\psi_1+\phi^{in})\right|\\
&\lesssim \|\phi\|_{n-2s+\sigma, a}\|\psi^{(1)}-\psi^{(2)}\|_{**,\beta,a}R^{a-2s}\mu_0^{\frac{n}{2}+s+\sigma}(t_0)\frac{1}{R^{a-2s}}\sum_{j=1}^k\frac{\mu_j^{-2s}t^{-\beta}}{1+|y_j|^{a}},
\end{aligned}
\end{equation*}
while in the case of $6s < n$,
\begin{equation*}\label{e4:61}\begin{aligned}
&\left|\tilde{N}_{\mu, \xi}(\psi^{(1)}+\psi_1+\phi^{in}) - \tilde{N}_{\mu, \xi}(\psi^{(2)}+\psi_1+\phi^{in})\right|\\
&\lesssim \|\phi\|^{p-1}_{n-2s+\sigma, a}\|\psi^{(1)}-\psi^{(2)}\|_{**,\beta,a}R^{a-2s}\mu_0^{2s+\frac{4s(\sigma+a)}{n-2s}}(t_0)\frac{1}{R^{a-2s}}\sum_{j=1}^k\frac{\mu_j^{-2s}t^{-\beta}}{1+|y_j|^{a}}.
\end{aligned}
\end{equation*}
Hence there exists a choice of $R$ in the form (\ref{e3:20}) such that
\begin{equation*}\label{e4:62}
\|\mathcal{A}(\psi^{(1)}) - \mathcal{A}(\psi^{(2)})\|_{**,\beta, a}\leq C\|\psi^{(1)}-\psi^{(2)}\|_{**,\beta, a}
\end{equation*}
holds with $C < 1$, provided $t_0$ is sufficiently large. Therefore, if $t_0$ is fixed sufficiently large, $\mathcal{A}$ is a contraction map in $\mathcal{B}$. The validity of (\ref{e4:28}) follows directly from (\ref{e4:5}). The proof is completed.
\end{proof}

\subsection{Properties of the solution $\psi$.}
\begin{prop}\label{p4:4.2}
Under the assumptions in Proposition \ref{p4:4.1}, $\Psi$ depends smoothly on the parameters $\lambda$, $\xi$, $\dot{\lambda}$, $\dot{\xi}$, $\phi$, for $y_j=\frac{x-\xi_j}{\mu_{0j}}$, we have
\begin{equation}\label{e4:64}
\big|\partial_\lambda\Psi[\lambda,\xi,\dot{\lambda},\dot{\xi},\phi][\bar{\lambda}](x, t)\big|\lesssim \frac{t_0^{-\varepsilon}}{R^{a-2s}}\|\bar{\lambda}(t)\|_{1+\sigma}\left(\sum_{j=1}^k\frac{\mu_0^{\frac{n-2s}{2}-1}(t)}{1+|y_j|^{a-2s}}\right),
\end{equation}
\begin{equation}\label{e4:74}
\big|\partial_\xi\Psi[\lambda,\xi,\dot{\lambda},\dot{\xi},\phi][\bar{\xi}](x, t)\big|\lesssim \frac{t_0^{-\varepsilon}}{R^{a-2s}}\|\bar{\xi}(t)\|_{1+\sigma}\left(\sum_{j=1}^k\frac{\mu_0^{\frac{n-2s}{2}-1}(t)}{1+|y_j|^{a-2s}}\right),
\end{equation}
\begin{equation}\label{e4:75}
\big|\partial_{\dot{\xi}}\Psi[\lambda,\xi,\dot{\lambda},\dot{\xi},\phi][\dot{\bar{\xi}}](x, t)\big|\lesssim \frac{t_0^{-\varepsilon}}{R^{a-2s}}\|\dot{\bar{\xi}}(t)\|_{n-4s+1+\sigma}\left(\sum_{j=1}^k\frac{\mu_0^{-\frac{n-6s}{2}-1+\sigma}(t)}{1+|y_j|^{a-2s}}\right),
\end{equation}
\begin{equation}\label{e4:76}
\big|\partial_{\dot{\lambda}}\Psi[\lambda,\xi,\dot{\lambda},\dot{\xi},\phi][\dot{\bar{\lambda}}](x, t)\big|\lesssim \frac{t_0^{-\varepsilon}}{R^{a-2s}}\|\dot{\bar{\lambda}}(t)\|_{n-4s+1+\sigma}\left(\sum_{j=1}^k\frac{\mu_0^{-\frac{n-6s}{2}-1+\sigma}(t)}{1+|y_j|^{a-2s}}\right),
\end{equation}
\begin{equation}\label{e4:84}
\big|\partial_{\phi}\Psi[\lambda,\xi,\dot{\lambda},\dot{\xi},\phi][\bar{\phi}](x, t)\big|\lesssim \frac{1}{R^{a-2s}}\|\bar{\phi}(t)\|_{n-2s+\sigma, a}\left(\sum_{j=1}^k\frac{\mu_0^{\frac{n-2s}{2}+\sigma}(t)}{1+|y_j|^{a-2s}}\right).
\end{equation}
\end{prop}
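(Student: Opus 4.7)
The plan is to differentiate the fixed-point identity $\psi = \mathcal{A}(\psi) + \psi_1$ from the proof of Proposition~\ref{p4:4.1} with respect to each of the five parameters. Let $\alpha$ denote any one of $\lambda,\xi,\dot\lambda,\dot\xi,\phi$ and $\bar\alpha$ an admissible perturbation. The G\^ateaux derivative $\Psi' := \partial_\alpha\Psi[\bar\alpha]$ satisfies, at least formally, the linear inhomogeneous problem
\begin{equation*}
\begin{cases}
\partial_t\Psi' = -(-\Delta)^s\Psi' + V_{\mu,\xi}\Psi' + F_\alpha[\bar\alpha] + \mathcal{N}_\alpha[\Psi'] & \text{in } \Omega\times(t_0,\infty),\\
\Psi' = -\partial_\alpha u^*_{\mu,\xi}[\bar\alpha] & \text{in } (\mathbb{R}^n\setminus\Omega)\times(t_0,\infty),\\
\Psi'(\cdot,t_0) = 0 & \text{in } \mathbb{R}^n,
\end{cases}
\end{equation*}
where $F_\alpha[\bar\alpha]$ collects the sources produced by $\partial_\alpha$ acting on $V_{\mu,\xi}\psi$, on the commutator and cutoff contributions in \eqref{e4:35}, on $S_{out}$, and on the $\bar\alpha$-linear part of $\tilde N_{\mu,\xi}(\tilde\phi)$, while $\mathcal{N}_\alpha[\Psi']$ collects the terms linear in $\Psi'$ arising from $\partial_\psi\tilde N_{\mu,\xi}(\psi+\psi_1+\phi^{in})$. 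Since this has exactly the structure of the model problem \eqref{e4:3}, Lemma~\ref{l4:lemma4.1} is applicable once we control $F_\alpha[\bar\alpha]$ and the exterior boundary datum in a weighted norm $\|\cdot\|_{*,\beta_\alpha,2s+a}$ for an appropriate exponent $\beta_\alpha$.

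The essential step is therefore the bound on $F_\alpha[\bar\alpha]$, which parallels the derivation of \eqref{e4:37}-\eqref{e4:39} in Proposition~\ref{p4:4.1}. For $\alpha = \phi$, redoing the estimate of \eqref{e4:38} directly yields $\|F_\phi[\bar\phi]\|_{*,\beta_\phi,2s+a}\lesssim R^{-(a-2s)}\|\bar\phi\|_{n-2s+\sigma,a}$ with $\beta_\phi = \frac{n-2s}{2(n-4s)}+\frac{\sigma}{n-4s}$, whence \eqref{e4:84} follows from \eqref{e4:4}. For $\alpha\in\{\lambda,\xi\}$ one differentiates the smooth $\mu$- and $\xi$-dependences of $u^*_{\mu,\xi}$, $V_{\mu,\xi}$, $S_{out}$ and $\tilde N_{\mu,\xi}$ using Lemma~\ref{l2.2} and the bounds \eqref{e4:21}-\eqref{e4:22}; each such differentiation costs one power of $\mu_0$ relative to the undifferentiated estimates, producing the weight $\mu_0^{(n-2s)/2-1}$ in \eqref{e4:64}-\eqref{e4:74}. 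For $\alpha\in\{\dot\lambda,\dot\xi\}$, the only $\dot\lambda_j,\dot\xi_j$-dependence of $\mathcal{A}$ enters through the explicit prefactors of $S_{\mu,\xi,j}$ in Lemma~\ref{l2.2} and through $\partial_t\eta_{j,R}$, which carry natural order $\mu_0^{n-4s+1+\sigma}$; differentiating strips off this factor and yields the weight appearing in \eqref{e4:75}-\eqref{e4:76}.

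The self-interaction $\mathcal{N}_\alpha[\Psi']$ is treated perturbatively, exactly as in Proposition~\ref{p4:4.1}: it is dominated by $C(u^*_{\mu,\xi})^{p-2}(|\psi|+|\phi^{in}|)|\Psi'|$ when $6s\geq n$ and by $C(|\psi|+|\phi^{in}|)^{p-1}|\Psi'|$ when $6s<n$, and in both regimes the induced operator on $\Psi'$ has norm $O(t_0^{-\varepsilon})$ in the weighted topology. Hence, for $t_0$ large enough, the map
\begin{equation*}
\Psi'\longmapsto T\bigl(F_\alpha[\bar\alpha]+\mathcal{N}_\alpha[\Psi'],\,-\partial_\alpha u^*_{\mu,\xi}[\bar\alpha],\,0\bigr)
\end{equation*}
is a contraction on the ball of appropriate radius in the corresponding weighted space, and its unique fixed point coincides with $\partial_\alpha\Psi[\bar\alpha]$. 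The exterior boundary datum $\partial_\alpha u^*_{\mu,\xi}$ is controlled by the smooth dependence of $U_{\mu_j,\xi_j}$, $H(\cdot,q_j)$ and $\Phi_j^*$ on their parameters (together with \eqref{e4:32}), producing a contribution of the same order as $F_\alpha[\bar\alpha]$.

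The principal technical obstacle lies in the precise bookkeeping of the powers of $\mu_0(t)$ across the five cases: the natural $\mu_0$-weights of $\bar\lambda,\bar\xi$, of $\dot{\bar\lambda},\dot{\bar\xi}$, and of $\bar\phi$ are all different, so the exponent $\beta_\alpha$ in Lemma~\ref{l4:lemma4.1} must be chosen separately in each case so that the right-hand side of \eqref{e4:4} reproduces the announced decay in \eqref{e4:64}-\eqref{e4:84}. Once the correct $\beta_\alpha$ is identified and the commutator/cutoff calculations of the proof of Proposition~\ref{p4:4.1} are adapted termwise, the five estimates follow directly.
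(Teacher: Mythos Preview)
Your proposal is correct and follows essentially the same approach as the paper: differentiate the outer problem with respect to each parameter, write the resulting equation for $\Psi'=\partial_\alpha\Psi[\bar\alpha]$ as a fixed-point problem of the form $\Psi'\mapsto T(F_\alpha+\mathcal N_\alpha[\Psi'],\text{boundary data},0)$, estimate the forcing $F_\alpha$ termwise (via the analogues of \eqref{e4:37}--\eqref{e4:39}), and close by contraction using the smallness of $\mathcal N_\alpha$. The paper organizes the five cases into three steps ($\lambda,\xi$; then $\dot\lambda,\dot\xi$; then $\phi$) and splits off the boundary contribution explicitly as $Z_1=T(0,-\partial_\alpha u^*_{\mu,\xi}[\bar\alpha],0)$ before running the fixed-point argument on the remainder, but this is purely organizational---your unified presentation captures the same mechanism.
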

\begin{proof}

{\bf Step 1}. Proof of (\ref{e4:64}) and (\ref{e4:74}).

We fix $j=1$. $\Psi[\lambda_1]$ is a solution to problem (\ref{e4:main})  for all $\lambda_1$ satisfying (\ref{e4:22}). Differentiating problem (\ref{e4:main}) with respect to $\lambda_1$ gives us a nonlinear equation. From the Implicit Function Theorem, the solutions are given by $\partial_{\lambda_1}\Psi[\bar{\lambda}_1](x, t)$. Decompose $\partial_{\lambda_1}\Psi[\bar{\lambda}_1](x, t) = Z_1 + Z$ with $Z_1 = T(0, -(\partial_{\lambda_1}u^*_{\mu,\xi})[\bar{\lambda}_1],0)$, where $T$ is defined in Lemma \ref{l4:lemma4.1}. Then $Z$ is a solution of the following nonlinear problem
\begin{equation}\label{e4:72}
\left\{
\begin{aligned}
&\partial_tZ =
-(-\Delta)^{s}Z + V_{\mu, \xi}Z + \left(\partial_{\lambda_1}V_{\mu,\xi}\right)[\bar{\lambda}_1]\psi + \partial_{\lambda_1}\left[\tilde{N}_{\mu,\xi}\left(\psi + \phi^{in}\right)\right][\bar{\lambda}_1] \\
&\quad\quad\quad + \partial_{\lambda_1}S_{out}[\bar{\lambda}_1]~~\text{ in }\Omega\times (t_0, \infty),\\
&Z= 0~~\text{ in }(\mathbb{R}^n\setminus\Omega)\times (t_0,\infty),\\
&Z(\cdot,t_0) = 0~~\text{ in }\mathbb{R}^n.
\end{aligned}
\right.
\end{equation}
By definition, $\sum_{j=1}^k\left\{\big[-(-\Delta)^{\frac{s}{2}}\eta_{j, R},-(-\Delta)^{\frac{s}{2}}\tilde{\phi}_j\big] + \tilde{\phi}_j(-(-\Delta)^{s} -\partial_t)\eta_{j, R}\right\}$ is independent of $\lambda_1$. Then for any $x\in \mathbb{R}^n\setminus\Omega$,
\begin{equation}\label{bdr}
\left|\partial_{\lambda_1}u^*_{\mu, \xi}(x, t)\right|\lesssim \mu_0^{\frac{n-2s}{2}-1}(t)|\bar{\lambda}_1(t)|.
\end{equation}
From \eqref{bdr} and Lemma \ref{l4:lemma4.1}, we obtain
\begin{equation*}
|Z_1(x, t)|\lesssim \frac{t_0^{-\varepsilon}}{R^{a-2s}}\|\bar{\lambda}_1\|_{1+\sigma}\left(\sum_{j=1}^k\frac{\mu_0^{\frac{n-2s}{2}-1}(t)}{1+|y_j|^{a-2s}}\right).
\end{equation*}
For problem (\ref{e4:72}), we compute
\begin{equation*}
\begin{aligned}
\partial_{\lambda_1}\left[\tilde{N}_{\mu,\xi}\left(\psi + \phi^{in}\right)\right][\bar{\lambda}_1]=&p\left[(u^*_{\mu,\xi}+\psi+\phi^{in})^{p-1} -(u^*_{\mu,\xi})^{p-1}\right](Z+Z_1)\\
&+p(p-1)(u^*_{\mu,\xi})^{p-2}(\psi+\phi^{in})\partial_{\lambda_1}u^*_{\mu,\xi}[\bar{\lambda}_1].
\end{aligned}
\end{equation*}
Therefore, $Z$ is a fixed point of the operator
\begin{equation}\label{e4:73}
\mathcal{A}_1(Z) = T(f + p\left[(u^*_{\mu,\xi}+\psi+\phi^{in})^{p-1} -(u^*_{\mu,\xi})^{p-1}\right]Z, 0, 0),
\end{equation}
where
\begin{equation}\label{fff}
\begin{aligned}
f =&\partial_{\lambda_1}S_{out}[\bar{\lambda}_1]+\left(\partial_{\lambda_1}V_{\mu,\xi}\right)[\bar{\lambda}_1]\psi + p\left[(u^*_{\mu,\xi}+\psi+\phi^{in})^{p-1} -(u^*_{\mu,\xi})^{p-1}\right]Z_1\\
&+ p(p-1)(u^*_{\mu,\xi})^{p-2}(\psi+\phi^{in})\partial_{\lambda_1}u^*_{\mu,\xi}[\bar{\lambda}_1].
\end{aligned}
\end{equation}
We claim that
\begin{equation}\label{e4:71}
\left|f(x, t)\right|\lesssim \frac{t_0^{-\varepsilon}}{R^{a-2s}}\|\bar{\lambda}_1\|_{1+\sigma}\sum_{j=1}^k\frac{\mu_j^{-2s}\mu_0^{\frac{n-2s}{2}-1+\sigma}}{1+|y_j|^{a}}.
\end{equation}
To prove \eqref{e4:71}, we first estimate $\partial_{\lambda_1}S_{out}[\bar{\lambda}_1]$. In the region $|x-q_i| > \delta$ ($i=1,\cdots,k$), we have the following estimate for $\partial_{\lambda_1}S(u^*_{\mu,\xi})$ by \eqref{awayq}, (\ref{e4:21}) and (\ref{e4:22})
\begin{equation*}
\partial_{\lambda_1}S(u^*_{\mu,\xi})[\bar{\lambda}_1](x, t) = \mu_0^{\frac{n-2s}{2}-1}f(x,\mu_0^{-1}\mu, \xi)\bar{\lambda}_1(t),
\end{equation*}
where the smooth and bounded function $f$ depends on $(x,\mu_0^{-1}\mu,\xi)$. Now we fix $j$ and consider the region $|x-q_j|\leq \delta$. From (\ref{e2:55}), we have
\begin{equation*}
\partial_{\lambda_1}S(u^*_{\mu,\xi})[\bar{\lambda}_1](x, t) = \partial_{\lambda_1}S(u_{\mu,\xi})[\bar{\lambda}_1](x, t)(1 + \mu_0f(x, \mu_0^{-1}\mu, \xi,t)),
\end{equation*}
where the smooth and bounded function $f$ depends on $(x,\mu_0^{-1}\mu,\xi,t)$. Differentiating  (\ref{e2:3333}) with respect to $\lambda_1$, we obtain
\begin{equation*}\label{e4:65}
\begin{aligned}
\partial_{\lambda_1}S(u_{\mu,\xi})[\bar{\lambda}_1](x, t) =& -(\frac{n-2s}{2}+1)\mu_1^{-\frac{n-2s}{2}-2}\bigg[\dot{\mu}_1Z_{n+1}(y_1)+\dot{\xi}_1\cdot \nabla U(y_1)\\
& -\frac{1}{\frac{n-2s}{2}+1} \frac{n-2s}{2}\left(\frac{n-2s}{2}-1\right)\mu_1^{n-2s}\dot{\mu}_1H(x, q_i)\bigg]\bar{\lambda}_1(t)\\
&-\mu_i^{-\frac{n-2s}{2}-1}\left[\dot{\xi}_1D^2 U(y_1)+\dot{\mu}_1\nabla Z_{n+1}(y_1)\right]\cdot \frac{x-\xi_1}{\mu_1^2}\bar{\lambda}_1(t)\\
&+ p\left(\sum_{i=1}^k\mu_i^{-\frac{n-2s}{2}}U(y_i) - \mu_i^{\frac{n-2s}{2}}H(x, q_i)\right)^{p-1}\\
&\quad\times\partial_{\lambda_1}\left[\mu_1^{-\frac{n-2s}{2}}U(y_1) - \mu_1^{\frac{n-2s}{2}}H(x, q_1)\right]\bar{\lambda}_1(t)\\
& - p\left(\mu_1^{-\frac{n-2s}{2}}U(y_1)\right)^{p-1}\partial_{\lambda_1}[\mu_1^{-\frac{n-2s}{2}}U(y_1)]\bar{\lambda}_1(t).
\end{aligned}
\end{equation*}
From (\ref{e4:21}) and (\ref{e4:22}), we have
\begin{eqnarray}\label{e4:66}
\left|\partial_{\lambda_1}S(u_{\mu,\xi})[\bar{\lambda}_1](x, t)\right| \lesssim\frac{t_0^{-\varepsilon}}{R^{a-2s}}\|\bar{\lambda}_1\|_{1+\sigma}\sum_{j=1}^k\frac{\mu_j^{-2s}\mu_0^{\frac{n-2s}{2}-1}}{1+|y_j|^{a}}.
\end{eqnarray}
Therefore, by the definition of $S_{out}$ together with \eqref{e4:66}, we obtain
\begin{eqnarray*}\label{e4:67}
\left|\partial_{\lambda_1}S_{out}[\bar{\lambda}_1](x, t)\right| \lesssim\frac{t_0^{-\varepsilon}}{R^{a-2s}}\|\bar{\lambda}_1\|_{1+\sigma}\sum_{j=1}^k\frac{\mu_j^{-2s}\mu_0^{\frac{n-2s}{2}-1}}{1+|y_j|^{a}}.
\end{eqnarray*}
Next, we estimate the remainders in $f$. Direct computations imply that
\begin{equation*}\label{e4:68}
\begin{aligned}
(\partial_{\lambda_1}V_{\mu,\xi})[\bar{\lambda}_1](x, t) =& p(p-1)\bigg[(u^*_{\mu, \xi})^{p-2}\partial_{\lambda_1}u^*_{\mu,\xi}[\bar{\lambda}_1]\\
&-\eta_{1, R}(\mu_1^{-\frac{n-2s}{2}}U(y_1))^{p-2}\partial_{\lambda_1}\big(\mu_1^{-\frac{n-2s}{2}}U(y_1)\big)[\bar{\lambda}_1]\bigg].
\end{aligned}
\end{equation*}
Since $\left|\partial_{\lambda_1}\left(\mu_1^{-\frac{n-2s}{2}}U\left(y_1\right)\right)\right|\lesssim\mu_0^{-1}\left|\mu_1^{-\frac{n-2s}{2}}U\left(y_1\right)\right|$ and $\beta = \frac{n-2s}{2(n-4s)} + \frac{\sigma}{n-4s}$, we have
\begin{eqnarray*}\label{e4:69}
\left|\left(\partial_{\lambda_1}V_{\mu,\xi}\right)[\bar{\lambda}_1]\psi(x, t)\right| \lesssim \|\psi\|_{**,\beta,a} \frac{t_0^{-\varepsilon}}{R^{a-2s}}\|\bar{\lambda}_1\|_{1+\sigma}\sum_{j=1}^k\frac{\mu_j^{-2s}\mu_0^{\frac{n-2s}{2}-1+\sigma}}{1+|y_j|^{a}}.
\end{eqnarray*}
By the same token, we can deal with $p(p-1)(u^*_{\mu,\xi})^{p-2}(\psi+\phi^{in})\partial_{\lambda_1}u^*_{\mu,\xi}[\bar{\lambda}_1]$ in \eqref{fff} and obtain
\begin{equation*}\label{e4:70}
 \left|p(p-1)(u^*_{\mu,\xi})^{p-2}(\psi+\phi^{in})\partial_{\lambda_1}u^*_{\mu,\xi}[\bar{\lambda}_1]\right| \lesssim \frac{t_0^{-\varepsilon}}{R^{a-2s}}\|\bar{\lambda}_1\|_{1+\sigma}\sum_{j=1}^k\frac{\mu_j^{-2s}\mu_0^{\frac{n-2s}{2}-1+\sigma}}{1+|y_j|^{a}}.
\end{equation*}
Analogously, we can estimate the last term $p\left[(u^*_{\mu,\xi}+\psi+\phi^{in})^{p-1} -(u^*_{\mu,\xi})^{p-1}\right]Z_1$. Therefore, we conclude the validity of (\ref{e4:71}).

Now we consider the fixed point problem (\ref{e4:73}). Then the operator $\mathcal{A}_1$ has a fixed point in the set of functions satisfying
$$|Z(x, t)|\leq M \frac{t_0^{-\varepsilon}}{R^{a-2s}}\|\bar{\lambda}_1\|_{1+\sigma}\sum_{j=1}^k\frac{\mu_0^{\frac{n-2s}{2}-1}}{1+|y_j|^{a-2s}}$$
with the large constant $M$ fixed. In fact, $\mathcal{A}_1$ is a contraction map when $R$ is chosen properly large in terms of $t_0$. Therefore, the estimate (\ref{e4:64}) for $\partial_{\lambda_1}\Psi[\bar{\lambda}_1]$ holds. The estimate (\ref{e4:74}) for $\partial_\xi\Psi[\bar{\xi}]$ can be verified in a similar way. Here we omit the details.

{\bf Step 2}. Proof of (\ref{e4:75}) and (\ref{e4:76}).

We fix $j=1$. From the discussions above, the function $\Psi[\dot{\lambda}_1]$ is a solution to  (\ref{e4:main})  for all $\dot{\lambda}_1$ satisfying (\ref{e4:22}). Then we differentiate problem (\ref{e4:main}) with respect to $\dot{\lambda}_1$ and obtain a nonlinear equation. From the Implicit Function Theorem, the solutions are given by $\partial_{\dot{\lambda}_1}\Psi[\dot{\bar{\lambda}}_1](x, t)$. Denote $Z(x, t) = \partial_{\dot{\lambda}_1}\Psi[\dot{\bar{\lambda}}_1](x, t)$. Then $Z$ is a solution to the following nonlinear problem
\begin{equation*}\label{e4:77}
\left\{
\begin{aligned}
&\partial_tZ = \\
&\quad -(-\Delta)^{s}Z + V_{\mu, \xi}Z + \partial_{\dot{\lambda}_1}\left[\tilde{N}_{\mu,\xi}\left(\psi + \phi^{in}\right)\right][\dot{\bar{\lambda}}_1] + \partial_{\dot{\lambda}_1}S_{out}[\dot{\bar{\lambda}}_1]~~\text{ in }\Omega\times (t_0, \infty),\\
&Z(x,t) = 0~~\text{ in }(\mathbb{R}^n\setminus\Omega)\times (t_0,\infty),\\
&Z(\cdot,t_0) = 0~~\text{ in }\mathbb{R}^n.
\end{aligned}
\right.
\end{equation*}
From the definition of $\tilde{N}_{\mu,\xi}\left(\psi + \phi^{in}\right)$, we have
\begin{equation*}
\partial_{\dot{\lambda}_1}\left[\tilde{N}_{\mu,\xi}\left(\psi + \phi^{in}\right)\right][\dot{\bar{\lambda}}_1]=p\left[(u^*_{\mu,\xi}+\psi+\phi^{in})^{p-1} -(u^*_{\mu,\xi})^{p-1}\right]Z(x, t).
\end{equation*}
Therefore, $Z$ is a fixed point for the operator
\begin{equation}\label{e4:78}
\mathcal{A}_1(Z) = T\left(\partial_{\dot{\lambda}_1}S_{out}[\dot{\bar{\lambda}}_1] + p\left[(u^*_{\mu,\xi}+\psi+\phi^{in})^{p-1} -(u^*_{\mu,\xi})^{p-1}\right]Z, 0, 0\right).
\end{equation}
Now we differentiate $S(u^*_{\mu,\xi})$ with respect to $\dot{\bar{\lambda}}_1$ in \eqref{sstar} directly and obtain
\begin{equation*}\label{e4:79}
\begin{aligned}
\partial_{\dot{\lambda}_1}S(u^*_{\mu,\xi})[\dot{\bar{\lambda}}_1](x, t)=&\mu_1^{-\frac{n-2s}{2}-1}\left[Z_{n+1}(y_1) + \frac{n-2s}{2}\mu_1^{n-2s}H(x, q_1)\right]\dot{\bar{\lambda}}_1(t)\\
&+\mu_j^{-\frac{n}{2}+s-1}\left[\frac{n-2s}{2}\Phi_{1}(y_1, t)+y_1\cdot\nabla_y\Phi_1\right]\dot{\bar{\lambda}}_1(t).
\end{aligned}
\end{equation*}
Hence
\begin{eqnarray*}\label{e4:80}
\left|\partial_{\dot{\lambda}_1}S(u^*_{\mu,\xi})[\dot{\bar{\lambda}}_1](x, t)\right|\lesssim \frac{t_0^{-\varepsilon}}{R^{a-2s}}\|\dot{\bar{\lambda}}_1(t)\|_{n-4s+1+\sigma}
\left(\sum_{j=1}^k\frac{\mu_j^{-2s}(t)\mu_{0}^{-\frac{n-6s}{2}-1}}{1+|y_j|^{a}}\right).
\end{eqnarray*}

Now we consider the fixed point problem (\ref{e4:78}). Similar to Step 1, $\mathcal{A}_1$ has a fixed point in the set of functions satisfying
\begin{eqnarray*}\label{e4:81}
\left|Z(x, t)\right|\lesssim \frac{t_0^{-\varepsilon}}{R^{a-2s}}\|\dot{\bar{\lambda}}_1(t)\|_{n-4s+1+\sigma}\sum_{j=1}^k\frac{\mu_{0}^{-\frac{n-6s}{2}-1}}{1+|y_j|^{a-2s}}.
\end{eqnarray*}
Thus estimate (\ref{e4:75}) holds.

On the other hand, observe that
\begin{eqnarray}\label{e4:82}
&& \partial_{\dot{\xi}_1}S(u^*_{\mu,\xi})[\dot{\bar{\xi}}_1](x, t)= \mu_1^{-\frac{n-2s}{2}-1}\left[\nabla U(y_1) + \nabla\Phi_1(y_1, t)\right]\dot{\bar{\xi}}_1(t).
\end{eqnarray}
From \eqref{e4:82} we have
\begin{eqnarray*}\label{e4:83}
\left|\partial_{\dot{\xi}_1}S(u^*_{\mu,\xi})[\dot{\bar{\lambda}}_1](x, t)\right|\lesssim \frac{t_0^{-\varepsilon}}{R^{a-2s}}\|\dot{\bar{\xi}}_1(t)\|_{n-4s+1+\sigma}
\left(\sum_{j=1}^k\frac{\mu_j^{-2s}(t)\mu_{0}^{-\frac{n-6s}{2}-1}}{1+|y_j|^{a}}\right).
\end{eqnarray*}
Therefore, we have (\ref{e4:76}).

{\bf Step 3}. Proof of (\ref{e4:84}).

Define $Z(x, t) = \partial_\phi\psi[\bar{\phi}](x, t)$ with $\bar{\phi}$ satisfying (\ref{e4:25}). Therefore, $Z$ is a solution to
\begin{equation*}\label{e4:85}
\left\{
\begin{aligned}
&\partial_tZ =
-(-\Delta)^{s}Z + V_{\mu, \xi}Z\\
&\quad\quad\quad + \sum_{j=1}^k\left\{\big[-(-\Delta)^{\frac{s}{2}}\eta_{j, R},-(-\Delta)^{\frac{s}{2}}\hat{\phi}_j\big] + \hat{\phi}_j\big(-(-\Delta)^{s} -\partial_t\big)\eta_{j, R}\right\}\\
&\quad\quad\quad + p\left[(u^*_{\mu,\xi}+\psi+\phi^{in})^{p-1} -(u^*_{\mu,\xi})^{p-1}\right]\bar{\phi}~~\text{ in }\Omega\times (t_0, \infty),\\
&Z = 0~~\text{ in }(\mathbb{R}^n\setminus\Omega)\times (t_0,\infty),\\
&Z(\cdot,t_0) = 0~~\text{ in }\mathbb{R}^n,
\end{aligned}
\right.
\end{equation*}
where $\hat{\phi} = \mu_0^{-\frac{n-2s}{2}}\bar{\phi}_j\left(\frac{x-\xi_j}{\mu_{0j}}, t\right)$.

As in Step 1 and Step 2, we have
\begin{equation*}\label{e4:86}
\begin{aligned}
&\left|\sum_{j=1}^k\left\{\big[-(-\Delta)^{\frac{s}{2}}\eta_{j, R},-(-\Delta)^{\frac{s}{2}}\hat{\phi}_j\big]+ \hat{\phi}_j\big(-(-\Delta)^{s} -\partial_t\big)\eta_{j, R}\right\}\right|\\
&\quad\lesssim \frac{1}{R^{a-2s}}\|\bar{\phi}\|_{n-2s+\sigma, a}\sum_{j=1}^k\frac{\mu_j^{-2s}\mu_0^{\frac{n-2s}{2}+\sigma}}{1+|y_j|^{a}}
\end{aligned}
\end{equation*}
and
\begin{equation*}\label{e4:87}
\begin{aligned}
&\bigg|p\left[(u^*_{\mu,\xi}+\psi+\phi^{in})^{p-1} -(u^*_{\mu,\xi})^{p-1}\right]\bar{\phi}\bigg|\\
&\quad\lesssim \frac{1}{R^{a-2s}}\|\bar{\phi}\|_{n-2s+\sigma, a}\left[\|\psi\|^{p-1}_{**,\beta,a} + \|\phi^{in}\|^{p-1}_{n-2s+\sigma,a}\right]\sum_{j=1}^k\frac{\mu_j^{-2s}\mu_0^{\frac{n-2s}{2}+\sigma}}{1+|y_j|^{a}}.
\end{aligned}
\end{equation*}
From Lemma \ref{l4:lemma4.1}, we conclude the validity of (\ref{e4:84}).
\end{proof}

\section{The inner problem}
Substituting the solution $\psi = \Psi[\lambda,\xi,\dot{\lambda},\dot{\xi},\phi]$ of the outer problem given by proposition \ref{p4:4.1} into the inner problem (\ref{e3:10}), the full problem is reduced to the following system
\begin{eqnarray}\label{e5:1}
&& \mu_{0j}^{2s}\partial_t\phi_j = -(-\Delta)^s_y\phi_j + pU^{p-1}(y)\phi_j + H_j[\lambda,\xi,\dot{\lambda},\dot{\xi},\phi](y,t),~ y\in \mathbb{R}^n,~ t\geq t_0
\end{eqnarray}
for $j = 1,\cdots,k$, where
\begin{equation}\label{e5:2}
\begin{aligned}
H_j[\lambda,\xi,\dot{\lambda},\dot{\xi},\phi]:=&\Bigg\{\mu_{0j}^{\frac{n+2s}{2}}S_{\mu, \xi, j}(\xi_j + \mu_{0j}y, t)+ B_j[\phi_j] + B_j^0[\phi_j]\\
&+ p\mu_{0j}^{\frac{n-2s}{2}}\frac{\mu_{0j}^{2s}}{\mu_j^{2s}}U^{p-1}\left(\frac{\mu_{0j}}{\mu_j}y\right)\psi(\xi_j + \mu_{0j}y, t)\Bigg\}\chi_{B_{2R}(0)}(y)
\end{aligned}
\end{equation}
and $B_j[\phi_j]$ and $B_j^0[\phi_j]$ are defined in (\ref{e3:11}), (\ref{e3:12}) respectively.

After the change of variables
\begin{equation*}\label{e5:3}
t = t(\tau),\quad \frac{dt}{d\tau} = \mu_{0j}^{2s}(t),
\end{equation*}
(\ref{e5:1}) is reduced to
\begin{eqnarray}\label{e5:4}
\partial_\tau\phi_j = -(-\Delta)^s_y\phi_j + pU^{p-1}(y)\phi_j + H_j[\lambda,\xi,\dot{\lambda},\dot{\xi},\phi](y,t(\tau)),~ y\in \mathbb{R}^n,~ \tau\geq \tau_0
\end{eqnarray}
with $\tau_0$ the unique positive number satisfying $t(\tau_0) = t_0$.

We will find a solution $\phi = (\phi_1,\cdots,\phi_k)$ to the system
\begin{equation}\label{e5:6}
\left\{
\begin{aligned}
&\partial_\tau\phi_j = -(-\Delta)^s_y\phi_j + pU^{p-1}(y)\phi_j + H_j[\lambda,\xi,\dot{\lambda},\dot{\xi},\phi](y,t(\tau)),~ y\in \mathbb{R}^n,~ \tau\geq \tau_0,\\
&\phi_j(y,\tau_0) = e_{0j}Z_0(y),~ y\in \mathbb{R}^n,
\end{aligned}
\right.
\end{equation}
for a constant $e_{0j}$ and all $j = 1,\cdots, k$.
Here $Z_0$ is a radially symmetric eigenfunction associated to the unique negative eigenvalue $\lambda_0$ of the eigenvalue problem
\begin{equation*}\label{e3:13}
L_0(\phi) + \lambda\phi = 0,\quad \phi\in L^\infty(\mathbb{R}^n).
\end{equation*}
Note that $\lambda_0$ is simple and $Z_0$ satisfies
\begin{equation*}\label{e3:14}
Z_0(y) \sim |y|^{-n-2s}\text{ as } |y|\to \infty,
\end{equation*}
see, for example, \cite{FrankLenzmannSilvestre2016}.
We will prove that (\ref{e5:6}) is solvable in the function space of those $\phi_j$'s satisfying (\ref{e4:25}), provided $\xi$ and $\lambda$ are chosen so that  $H_j[\lambda,\xi,\dot{\lambda},\dot{\xi},\phi](y,t(\tau))$ satisfy the orthogonality conditions
\begin{equation}\label{e5:7}
\int_{B_{2R}}H_j[\lambda,\xi,\dot{\lambda},\dot{\xi},\phi](y,t(\tau))Z_l(y)dy = 0,
\end{equation}
for all $\tau\geq \tau_0$, $j = 1,\cdots,k ~\mbox{and}~l = 1,2,\cdots,n+1.$
We first develop a linear theory which is the context of the subsection 5.1.
\subsection{The linear theory.}
In this subsection, for $R > 0$ fixed large, we find a solution to the nonlocal initial value problem
\begin{equation}\label{e5:31}
\left\{
\begin{aligned}
&\partial_\tau\phi = -(-\Delta)^s\phi + pU^{p-1}(y)\phi + h(y,\tau),~ y\in \mathbb{R}^n,~ \tau\geq \tau_0,\\
&\phi(y,\tau_0) = e_{0}Z_0(y),~ y\in \mathbb{R}^n.
\end{aligned}
\right.
\end{equation}
Let
\begin{equation*}
\nu = 1 + \frac{\sigma}{n-2s},
\end{equation*}
then $\mu_0^{n-2s+\sigma}\sim \tau^{-\nu}$.
Define
\begin{equation*}
\|h\|_{a, \nu, \eta}: = \sup_{\tau > \tau_0}\sup_{y\in B_{2R}}\tau^\nu(1 + |y|^a)(|h(y,\tau)|+(1+|y|^\eta)\chi_{B_{2R}(0)}(y)[h(\cdot, \tau)]_{\eta, B_{1}(0)}).
\end{equation*}
In the following, we always assume that $h = h(y, \tau)$ is a function defined in the whole space $\mathbb{R}^n$ which is zero outside $B_{2R}(0)$ for all $\tau > \tau_0$.
The main result in this subsection is the following.
\begin{prop}\label{proposition5.1}
Suppose $a\in (2s, n-2s)$, $\nu > 0$, $\|h\|_{2s+a,\nu, \eta} < +\infty$ and
\begin{equation*}
\int_{B_{2R}}h(y,\tau)Z_j(y)dy = 0~\text{ for all }~\tau\in (\tau_0,\infty), ~j = 1,\cdots, n+1.
\end{equation*}
For sufficiently large $R$, there exist $\phi = \phi[h](y, \tau)$ and $e_0 = e_0[h](\tau)$ ($\tau\in (\tau_0,+\infty),y\in \mathbb{R}^n$) satisfying (\ref{e5:31}) and
\begin{equation}\label{e5:100}
\begin{aligned}
&(1+|y|)|\nabla_y \phi(y, \tau)|\chi_{B_{2R}(0)}(y)+ |\phi(y,\tau)|\\
&\quad\quad\quad\quad\quad\quad\quad\quad\quad\quad\lesssim \tau^{-\nu}(1+|y|)^{-a}\|h\|_{2s+a,\nu, \eta}, \tau\in (\tau_0,+\infty),y\in \mathbb{R}^n,
\end{aligned}
\end{equation}
\begin{equation}\label{e5:101}
|e_0[h]|\lesssim \|h\|_{2s+a,\nu, \eta}.
\end{equation}
\end{prop}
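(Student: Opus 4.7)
My strategy would be to combine an explicit ODE treatment of the finite-dimensional spectral projections with a blow-up/contradiction argument for the infinite-dimensional remainder. The operator $L_0\psi:=-(-\Delta)^s\psi+pU^{p-1}\psi$ has a one-dimensional unstable eigenspace spanned by $Z_0$ (with eigenvalue $|\lambda_0|>0$), an $(n+1)$-dimensional kernel spanned by $Z_1,\ldots,Z_{n+1}$ by \cite{DelPinoSirePAMS}, and the rest of its spectrum in $(-\infty,0)$. The parameter $e_0$ is precisely what is needed to freeze the unstable mode, while the orthogonality hypothesis on $h$ kills the kernel directions.

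\textbf{Spectral decomposition and the choice of $e_0$.} I would decompose any candidate solution as
\begin{equation*}
\phi(y,\tau)=c_0(\tau)Z_0(y)+\sum_{j=1}^{n+1}c_j(\tau)Z_j(y)+\phi_\perp(y,\tau),
\end{equation*}
with $\phi_\perp$ $L^2$-orthogonal to $Z_0,Z_1,\ldots,Z_{n+1}$. Testing the equation against $Z_j$ (for $j\ge 1$) and using $L_0Z_j=0$, $\int hZ_j\,dy=0$, and $c_j(\tau_0)=0$ gives $c_j\equiv 0$. The $Z_0$-projection yields the scalar ODE $\dot c_0=|\lambda_0|c_0+\beta(\tau)$ with $\beta(\tau):=\langle h(\cdot,\tau),Z_0\rangle/\|Z_0\|_{L^2}^2$ and $c_0(\tau_0)=e_0$. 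Since $|\lambda_0|>0$ this is unstable, and boundedness of $c_0$ as $\tau\to+\infty$ selects the unique
\begin{equation*}
e_0=e_0[h]=-\int_{\tau_0}^{+\infty}e^{-|\lambda_0|(s-\tau_0)}\beta(s)\,ds.
\end{equation*}
Because $|\beta(s)|\lesssim s^{-\nu}\|h\|_{2s+a,\nu,\eta}$ and $\nu>0$, this integral converges absolutely and yields $|e_0|\lesssim\|h\|_{2s+a,\nu,\eta}$, which is (\ref{e5:101}); moreover $|c_0(\tau)|\lesssim\tau^{-\nu}\|h\|_{2s+a,\nu,\eta}$.

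\textbf{Construction and a priori estimate.} I would first solve a truncated problem on $[\tau_0,T]$ with the terminal condition $c_0(T)=0$ via Duhamel in a suitable weighted space; letting $T\to\infty$ recovers the $e_0$ above. The pointwise decay (\ref{e5:100}) is then proved by contradiction: assume sequences $h_n$ and corresponding $\phi_n$ with $\|h_n\|_{2s+a,\nu,\eta}\to 0$ but $\sup_{y,\tau}\tau^\nu(1+|y|)^a|\phi_n(y,\tau)|=1$. Choose $(y_n,\tau_n)$ nearly attaining the supremum and split into regimes. If $|y_n|$ and $\tau_n$ are both bounded, interior regularity for fractional parabolic equations (Silvestre, \cite{silvestreium2012differentiability}) forces $\phi_n\to 0$ locally, contradicting the choice of $(y_n,\tau_n)$. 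If $|y_n|$ stays bounded and $\tau_n\to\infty$, rescaling $\tilde\phi_n(y,\tau):=\tau_n^\nu\phi_n(y,\tau_n+\tau)$ and passing to a subsequential limit produces a nontrivial bounded eternal solution $\phi_\infty$ of $\partial_\tau\phi=L_0\phi$ on $\mathbb R^n\times\mathbb R$ with polynomial spatial decay $(1+|y|)^{-a}$ that is $L^2$-orthogonal (in the limit) to $Z_0,\ldots,Z_{n+1}$. If $|y_n|\to\infty$, a parabolic rescaling by $|y_n|$ exploits that $U^{p-1}(y)=O(|y|^{-(n+2s)})$ at infinity and yields a nontrivial bounded decaying solution of the homogeneous equation $\partial_\tau\phi+(-\Delta)^s\phi=0$ on $\mathbb R^n\times(-\infty,0]$; a Liouville-type/removable singularity result for the fractional heat operator rules this out.

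\textbf{Limit passage and main obstacle.} With the uniform decay bound in hand, a diagonal extraction as $T\to\infty$ produces $\phi[h]$ satisfying (\ref{e5:31}) with (\ref{e5:100}) on $[\tau_0,\infty)\times\mathbb R^n$; the gradient estimate on $B_{2R}(0)$ follows from interior Hölder/Schauder estimates for the operator $\partial_\tau-L_0$, using the Hölder-seminorm piece built into $\|h\|_{2s+a,\nu,\eta}$. I expect the hardest step to be the rigidity in the second regime above: proving that any bounded eternal solution of the linearized bubble equation on $\mathbb R^n\times\mathbb R$ with polynomial spatial decay and $L^2$-orthogonal to the unstable-plus-kernel subspace must vanish. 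This is the nonlocal analogue of Lemma 4.5 of \cite{davila2017singularity}; it relies on the nondegeneracy result of \cite{DelPinoSirePAMS} together with the removable-singularity property for the limiting equation singled out in the introduction as the reason the local argument from \cite{cortazar2016green} does not transfer to the fractional setting.
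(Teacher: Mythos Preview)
Your proposal is correct and follows essentially the same blow-up strategy as the paper. The only structural difference worth noting concerns the $Z_0$ mode: you isolate $c_0(\tau)$ and solve the unstable scalar ODE $\dot c_0=|\lambda_0|c_0+\beta(\tau)$ by shooting, which yields an explicit integral formula for $e_0$ and the bound $|c_0(\tau)|\lesssim\tau^{-\nu}$. The paper instead reformulates the problem (its Lemma~\ref{l5:3}) by subtracting a correction $c(\tau)Z_0$ on the right-hand side, imposing $\int\phi Z_0=0$ for all $\tau$, and taking zero initial data; here $c(\tau)=\langle h(\cdot,\tau),Z_0\rangle/\|Z_0\|_{L^2}^2$ directly. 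The two viewpoints are equivalent and each buys something: your shooting argument gives $e_0$ explicitly and makes (\ref{e5:101}) transparent, while the paper's orthogonal-projection formulation keeps $\phi$ in the stable subspace throughout, which streamlines the energy computation used in the rigidity step. For that rigidity step (your second regime), the paper's argument is precisely the one you anticipate: differentiate the limiting equation in $\tau$, use $\partial_\tau\int|\tilde\phi_\tau|^2+B(\tilde\phi_\tau,\tilde\phi_\tau)=0$ with $B\geq 0$ on the orthogonal complement, together with $\int_{-\infty}^0\int|\tilde\phi_\tau|^2<\infty$, to conclude $\tilde\phi_\tau\equiv 0$, and then invoke nondegeneracy of $L_0$. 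The $|y_n|\to\infty$ regime and the passage from finite-horizon $\tau_1$ to $\tau_1=\infty$ are handled exactly as you outline. One minor point: when you test against $Z_j$ to obtain $c_j\equiv 0$, note that $Z_{n+1}$ is only barely in $L^2$ under $n>4s$, so the paper makes this rigorous with a cutoff argument; you should do the same.
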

\begin{lemma}\label{l5:3}
Suppose $a\in (2s, n-2s)$, $\nu > 0$, $\|h\|_{2s+a,\nu, \eta} < +\infty$ and
\begin{equation*}
\int_{\mathbb{R}^n}h(y,\tau)Z_j(y)dy = 0~\text{ for all }~\tau\in (\tau_0,\infty), ~j = 1,\cdots, n+1.
\end{equation*}
For any sufficiently large $\tau_1 > 0$, the solution $(\phi(y,\tau), c(\tau))$ of the problem
\begin{eqnarray}\label{e5:32}
\left\{
\begin{aligned}
&\partial_\tau\phi = -(-\Delta)^s\phi + pU^{p-1}(y)\phi + h(y,\tau)-c(\tau)Z_0(y),~ y\in \mathbb{R}^n,~\tau\geq \tau_0,\\
&\int_{\mathbb{R}^n}\phi(y,\tau)Z_0(y)dy = 0~\mbox{ for all }~ \tau\in (\tau_0,+\infty),\\
&\phi(y,\tau_0) = 0,~y\in \mathbb{R}^n,
\end{aligned}
\right.
\end{eqnarray}
satisfies the estimates
\begin{equation}\label{e5:35}
\|\phi(y,\tau)\|_{a,\tau_1}\lesssim \|h\|_{2s+a,\tau_1}
\end{equation}
and
\begin{equation*}
|c(\tau)|\lesssim \tau^{-\nu}R^a\|h\|_{2s+a,\tau_1}~ \text{ for }~\tau\in (\tau_0,\tau_1).
\end{equation*}
Here $\|h\|_{b,\tau_1}:=\sup_{\tau\in (\tau_0,\tau_1)}\tau^\nu\|(1+|y|^b)h\|_{L^\infty(\mathbb{R}^n)}$.
\end{lemma}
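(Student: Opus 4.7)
The plan is to solve for the Lagrange multiplier $c(\tau)$ explicitly from the orthogonality constraint, invoke Duhamel's formula for existence, and then prove the decay estimate by a blow-up contradiction, which is the crux of the argument. First, I would test the equation in \eqref{e5:32} against $Z_0$: using the self-adjointness of $L_0$, the eigenvalue identity $L_0 Z_0 = -\lambda_0 Z_0$, and the constraint $\tfrac{d}{d\tau}\int \phi Z_0\,dy = 0$, one arrives at
\begin{equation*}
c(\tau)\int_{\mathbb{R}^n} Z_0^2\,dy = \int_{\mathbb{R}^n} h(y,\tau) Z_0(y)\,dy.
\end{equation*}
Combined with the support of $h$ in $B_{2R}$ and the pointwise bound $|h(y,\tau)|\le \tau^{-\nu}(1+|y|^{2s+a})^{-1}\|h\|_{2s+a,\tau_1}$, this yields the claimed polynomial-in-$R$ bound on $|c(\tau)|$. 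Once $c(\tau)$ is determined, existence and uniqueness of $\phi$ with $\phi(\cdot,\tau_0) = 0$ follow from the Duhamel formula for $\partial_\tau - L_0$.

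The main work is the a priori estimate \eqref{e5:35}, which I would prove by contradiction: suppose there exist sequences $\tau_{1,n}\to\infty$, $h_n$, $\phi_n$, $c_n$ solving \eqref{e5:32} with $\|h_n\|_{2s+a,\tau_{1,n}}\to 0$ but $\|\phi_n\|_{a,\tau_{1,n}} = 1$, and points $(y_n,\tau_n)\in\mathbb{R}^n\times(\tau_0,\tau_{1,n})$ satisfying $\tau_n^\nu(1+|y_n|^a)|\phi_n(y_n,\tau_n)|\ge \tfrac12$. A key observation is that since $\phi_n(\cdot,\tau_0)=0$ and $\int h_n Z_j\,dy = 0$ for $j=1,\dots,n+1$, testing against $Z_j$ (which lies in $\ker L_0$) preserves $\int \phi_n(\cdot,\tau) Z_j\,dy = 0$ for every $j=0,1,\dots,n+1$ and every $\tau$. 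I would then split into three regimes. If $\tau_n$ and $|y_n|$ stay bounded, fractional parabolic regularity \cite{silvestreium2012differentiability} plus compactness extracts a nontrivial bounded limit solving the homogeneous Cauchy problem with zero initial data, contradicting uniqueness. If $\tau_n\to\infty$ with $|y_n|$ bounded, the time-translation $\tilde\phi_n(y,\tau):=\tau_n^\nu\phi_n(y,\tau+\tau_n)$ converges to a bounded ancient solution of $\partial_\tau\phi_\infty = L_0\phi_\infty$ that decays like $|y|^{-a}$ and satisfies all orthogonality relations; a Liouville argument combined with the nondegeneracy result in \cite{DelPinoSirePAMS} then forces $\phi_\infty\equiv 0$.

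The hardest case, and the one driving the restriction $a>2s$ (which in turn motivates the nonlocal corrections introduced in Section 2), is $|y_n|\to\infty$. Setting $R_n := |y_n|$ and rescaling spatially by $\tilde\phi_n(z,\tau) := R_n^a\phi_n(R_n z,\tau)$, the potential $pU^{p-1}(R_n z)\to 0$ uniformly on compact subsets of $\mathbb{R}^n\setminus\{0\}$. A subsequence then converges on such sets to a nontrivial solution $\phi_\infty$ of the pure fractional heat equation, with at worst a singularity at the origin of order $|z|^{-a}$. Since $a>2s$ this singularity is removable---the nonlocal counterpart of \cite[Lemma~4.5]{davila2017singularity}---and a Liouville statement for $\partial_\tau-(-\Delta)^s$ on $\mathbb{R}^n\times\mathbb{R}$ forces $\phi_\infty\equiv 0$, contradicting $|\tilde\phi_n(z_n,\cdot)|\ge\tfrac12$ at points $|z_n|=1$. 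The main obstacle is precisely this regime: correctly identifying the limit problem, controlling the nonlocal tail of $(-\Delta)^s\tilde\phi_n$ uniformly in $n$, and invoking a parabolic removable-singularity property is what distinguishes the fractional analysis from the classical heat equation treatment in \cite{cortazar2016green}.
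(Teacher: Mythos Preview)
Your outline follows the paper's proof closely: determine $c(\tau)$ by testing against $Z_0$, propagate the orthogonality to $Z_1,\dots,Z_{n+1}$, and run a blow-up contradiction splitting on whether $|y_n|$ stays bounded or tends to infinity. Two points need tightening, however.

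First, the rescaling you propose in the regime $|y_n|\to\infty$ is incomplete. With $\tilde\phi_n(z,\tau)=R_n^a\phi_n(R_nz,\tau)$ and no time rescaling, the equation becomes $\partial_\tau\tilde\phi_n=-R_n^{-2s}(-\Delta_z)^s\tilde\phi_n+\dots$, so the diffusion term vanishes in the limit rather than producing the fractional heat equation; moreover, without the factor $\tau_n^\nu$ the lower bound $|\tilde\phi_n(y_n/R_n,\tau_n)|\ge\tfrac12\tau_n^{-\nu}$ degenerates. The paper uses the parabolic rescaling $\tilde\phi_n(z,\tau)=\tau_n^\nu|y_n|^a\phi_n\bigl(y_n+|y_n|z,\,|y_n|^{2s}\tau+\tau_n\bigr)$, which balances the time derivative against $(-\Delta)^s$ and yields a nontrivial ancient solution of $\partial_\tau\tilde\phi=-(-\Delta)^s\tilde\phi$ on $\mathbb{R}^n\setminus\{\hat e\}\times(-\infty,0]$ with the singular bound $|\tilde\phi|\le|z-\hat e|^{-a}$; removability then uses $a<n-2s$ (not $a>2s$).

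Second, in the bounded-$|y_n|$ regime you invoke ``a Liouville argument combined with nondegeneracy,'' but this step is the substantive one and deserves to be spelled out. The paper's mechanism is an energy argument: for the limiting ancient solution $\tilde\phi$ one differentiates to obtain $\partial_\tau\tilde\phi_\tau=L_0\tilde\phi_\tau$, observes that the quadratic form $B(\psi,\psi)=\int|(-\Delta)^{s/2}\psi|^2-pU^{p-1}\psi^2$ is nonnegative on the space orthogonal to $Z_0,\dots,Z_{n+1}$ (which is exactly where $\tilde\phi_\tau$ lives), and combines $\tfrac12\partial_\tau\int|\tilde\phi_\tau|^2+B(\tilde\phi_\tau,\tilde\phi_\tau)=0$ with $\int|\tilde\phi_\tau|^2=-\tfrac12\partial_\tau B(\tilde\phi,\tilde\phi)$ to force $\tilde\phi_\tau\equiv0$; only then does the nondegeneracy of \cite{DelPinoSirePAMS} finish the job. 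This is the place where the orthogonality conditions on $\phi$ are actually used, so it should not be left implicit.
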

\begin{proof}
Note that (\ref{e5:32}) is equivalent to
\begin{eqnarray}\label{e5:99}
\left\{
\begin{aligned}
&\partial_\tau\phi = -(-\Delta)^s\phi + pU^{p-1}(y)\phi + h(y,\tau)-c(\tau)Z_0(y),~ y\in \mathbb{R}^n,~ \tau\geq \tau_0,\\
&\phi(y,\tau_0) = 0,~ y\in \mathbb{R}^n
\end{aligned}
\right.
\end{eqnarray}
for $c(\tau)$ given by the relation
\begin{equation*}
c(\tau) \int_{\mathbb{R}^n}|Z_0(y)|^2dy =  \int_{\mathbb{R}^n}h(y,\tau)Z_0(y)dy.
\end{equation*}
It is easy to see that
\begin{equation}\label{e5:102}
|c(\tau)|\lesssim \tau^{-\nu}R^a\|h\|_{2s+a,\tau_1}
\end{equation}
holds for $\tau\in (\tau_0,\tau_1)$. So we only need to prove (\ref{e5:35}) for the solution $\phi$ of (\ref{e5:99}). Inspired by Lemma 4.5 of \cite{davila2017singularity} and the linear theory of \cite{sireweizhenghalf}, we will use the blow-up argument.

First, we claim that, given $\tau_1 > \tau_0$, we have $\|\phi\|_{a, \tau_1} < +\infty$. Indeed, by the fractional parabolic theory (see \cite{jinxiongjram2014}), given $R_0 > 0$ there is a $K = K(R_0,\tau_1)$ such that
\begin{equation*}
|\phi(y,\tau)|\leq K \quad\text{in }B_{R_0}(0)\times (\tau_0, \tau_1].
\end{equation*}
Fix $R_0$ large and take $K_1$ sufficiently large, $K_1\rho^{-a}$ is a supersolution for (\ref{e5:99}) when $\rho > R_0$. Hence $|\phi|\leq 2K_1\rho^{-a}$ and $\|\phi\|_{a,\tau_1} < +\infty$ for any $\tau_1 > 0$. Next, we claim that the following identities hold,
\begin{equation}\label{e5:33}
\int_{\mathbb{R}^n}\phi(y, \tau)\cdot Z_j(y)dy = 0\text{ for all }\tau\in (\tau_0,\tau_1), j= 0,1,\cdots, n+1.
\end{equation}
Indeed, From the definition of $c(\tau)$, we have
\begin{equation*}
\int_{\mathbb{R}^n}\phi(y, \tau)\cdot Z_0(y)dy = 0.
\end{equation*}
Testing (\ref{e5:99}) with $Z_j\eta,$ where $\eta(y) = \eta_0(|y|/R_1),\,\, j = 1,\cdots, n+1,$  $R_1$ is an arbitrary positive constant and the smooth cut-off function $\eta_0$ is defined as
\begin{equation*}
\eta_0(r) =
\left\{
\begin{aligned}
1,~ \mbox{for}~ r < 1,\\
0,~ \mbox{for}~ r > 2,
\end{aligned}
\right.
\end{equation*}
we get
\begin{equation*}
\int_{\mathbb{R}^n}\phi(\cdot, \tau)\cdot Z_j\eta = \int_{0}^\tau ds\int_{\mathbb{R}^n}(\phi(\cdot, s)\cdot L_0[\eta Z_j] + h Z_j\eta - c(s)Z_0Z_j\eta).
\end{equation*}
Furthermore, it holds that
\begin{equation*}
\begin{aligned}
&\int_{\mathbb{R}^n}\bigg(\phi\cdot L_0[\eta Z_j] + h Z_j\eta - c(s)Z_0Z_j\eta\bigg)\\
&= \int_{\mathbb{R}^n}\phi\cdot \bigg(Z_j(-(-\Delta)^s)\eta + \big[-(-\Delta)^{\frac{s}{2}}\eta, -(-\Delta)^{\frac{s}{2}}Z_j\big]\bigg)\\
&\quad - h\cdot Z_j(1-\eta)+ c(s)Z_0Z_j(1-\eta)\\
&= O(R_1^{-\varepsilon})
\end{aligned}
\end{equation*}
for some small positive number $\varepsilon$ uniformly on $\tau\in (\tau_0,\tau_1)$. Then (\ref{e5:33}) hold by letting $R_1\to +\infty$.
Finally, we claim that for all $\tau_1 > 0$ large enough, any $\phi$ with $\|\phi\|_{a,\tau_1} < +\infty$ solving (\ref{e5:99}) and satisfying (\ref{e5:33}), we have
\begin{equation}\label{e5:34}
\|\phi\|_{a,\tau_1}\lesssim \|h\|_{2s+a,\tau_1}.
\end{equation}
Hence (\ref{e5:35}) holds.

To prove (\ref{e5:34}), we use the contradiction argument. Suppose that there exist sequences $\tau_1^k\to +\infty$ and $\phi_k$, $h_k$, $c_k$ satisfying
\begin{equation*}\label{e5:36}
\left\{
\begin{aligned}
&\partial_\tau\phi_k = -(-\Delta)^s\phi_k + pU^{p-1}(y)\phi_k + h_k - c_k(\tau)Z_0(y),~ y\in \mathbb{R}^n,~ \tau\geq \tau_0,\\
&\int_{\mathbb{R}^n}\phi_k(y, \tau)\cdot Z_j(y)dy = 0\text{ for all }\tau\in (\tau_0,\tau_1^k), j= 0, 1,\cdots, n+1,\\
&\phi_k(y,\tau_0) = 0, y\in \mathbb{R}^n
\end{aligned}
\right.
\end{equation*}
and
\begin{equation}\label{e5:38}
\|\phi_k\|_{a,\tau_1^k}=1,\quad \|h_k\|_{2s+a,\tau_1^k}\to 0.
\end{equation}
By (\ref{e5:102}), we have $\sup_{\tau\in (\tau_0, \tau_1^k)}\tau^\nu c_k(\tau)\to 0$.
First, we claim that
\begin{equation}\label{e5:37}
\sup_{\tau_0 < \tau < \tau_1^k}\tau^\nu|\phi_k(y,\tau)|\to 0
\end{equation}
holds uniformly on compact subsets of $\mathbb{R}^n$. Indeed, if for some $|y_k|\leq M$ and $\tau_0 < \tau_2^k < \tau_1^k$,
\begin{equation*}
(\tau_2^k)^\nu|\phi_k(y_k,\tau_2^k)|\geq \frac{1}{2},
\end{equation*}
then it is easy to see that $\tau_2^k\to +\infty$. Now, we define
\begin{equation*}
\tilde{\phi}_n(y,\tau) = (\tau_2^k)^\nu\phi_n(y,\tau_2^k + \tau).
\end{equation*}
Then
\begin{equation*}
\partial_\tau\tilde{\phi}_k = L_0[\tilde{\phi}_k] + \tilde{h}_k - \tilde{c}_k(\tau)Z_0(y)\text{ in }\mathbb{R}^n\times (\tau_0-\tau_2^k,0],
\end{equation*}
with $\tilde{h}_k\to 0$, $\tilde{c}_k\to 0$ uniformly on compact subsets of $\mathbb{R}^n\times (-\infty, 0]$ and
\begin{equation*}
|\tilde{\phi}_k(y,\tau)|\leq \frac{1}{1+|y|^a}\text{ in }\mathbb{R}^n\times (\tau_0-\tau_2^k,0].
\end{equation*}
Using the fact that $a\in (2s, n-2s)$ and the dominant convergence theorem, we have $\tilde{\phi}_k\to\tilde{\phi}$ uniformly on compact subsets of $\mathbb{R}^n\times (-\infty, 0]$ with $\tilde{\phi}\neq 0$ and
\begin{equation}\label{e5:103}
\left\{
\begin{aligned}
&\partial_\tau\tilde{\phi} = -(-\Delta)^s\tilde{\phi} + pU^{p-1}(y)\tilde{\phi}~~\text{ in }~~\mathbb{R}^n\times (-\infty, 0],\\
&\int_{\mathbb{R}^n}\tilde{\phi}(y, \tau)\cdot Z_j(y)dy = 0\text{ for all }\tau\in (-\infty, 0], ~ j= 0, 1,\cdots, n+1,\\
&|\tilde{\phi}(y,\tau)|\leq \frac{1}{1+|y|^a}\text{ in }\mathbb{R}^n\times (-\infty, 0],\\
&\tilde{\phi}(y,\tau_0) = 0, y\in \mathbb{R}^n.
\end{aligned}
\right.
\end{equation}
We claim that $\tilde{\phi} = 0$, which is a contradiction. By fractional parabolic regularity (see \cite{jinxiongjram2014}), $\tilde{\phi}(y,\tau)$ is smooth. A scaling argument shows
\begin{equation*}
(1+|y|^s)|(-\Delta)^\frac{s}{2}\tilde{\phi}| + |\tilde{\phi}_\tau| + |(-\Delta)^s\tilde{\phi}|\lesssim (1+|y|)^{-2s-a}.
\end{equation*}
Differentiating (\ref{e5:103}), we get $\partial_\tau\tilde{\phi}_\tau = -(-\Delta)^s\tilde{\phi}_\tau + pU^{p-1}(y)\tilde{\phi}_\tau$ and
\begin{equation*}
(1+|y|^s)|(-\Delta)^\frac{s}{2}\tilde{\phi}_\tau| + |\tilde{\phi}_{\tau\tau}| + |(-\Delta)^s\tilde{\phi}_\tau|\lesssim (1+|y|)^{-4s-a}.
\end{equation*}
Moreover, it holds that
\begin{equation*}
\frac{1}{2}\partial_\tau\int_{\mathbb{R}^n}|\tilde{\phi}_\tau|^2 + B(\tilde{\phi}_\tau, \tilde{\phi}_\tau) = 0,
\end{equation*}
where
\begin{equation*}
B(\tilde{\phi}, \tilde{\phi}) = \int_{\mathbb{R}^n}\left[|(-\Delta)^{\frac{s}{2}}\tilde{\phi}|^2 - pU^{p-1}(y)|\tilde{\phi}|^2\right]dy.
\end{equation*}
Since $\int_{\mathbb{R}^n}\tilde{\phi}(y, \tau)\cdot Z_j(y)dy = 0$ for all $\tau\in (-\infty, 0]$, $j= 0, 1,\cdots, n+1 $, $B(\tilde{\phi}, \tilde{\phi})\geq 0$. Also, we have
\begin{equation*}
\int_{\mathbb{R}^n}|\tilde{\phi}_\tau|^2 = -\frac{1}{2}\partial_\tau B(\tilde{\phi}, \tilde{\phi}).
\end{equation*}
From these relations,
\begin{equation*}
\partial_\tau\int_{\mathbb{R}^n}|\tilde{\phi}_\tau|^2 \leq 0,\quad \int_{-\infty}^0d\tau\int_{\mathbb{R}^n}|\tilde{\phi}_\tau|^2 < +\infty.
\end{equation*}
Hence $\tilde{\phi}_\tau = 0$. So $\tilde{\phi}$ is independent of $\tau$ and $L_0[\tilde{\phi}] = 0$. Since $\tilde{\phi}$ is bounded, by the nondegeneracy of $L_0$ (see, \cite{DelPinoSirePAMS}),  $\tilde{\phi}$ is a linear combination of $Z_j$, $j = 1,\cdots, n+1$. But $\int_{\mathbb{R}^n}\tilde{\phi}\cdot Z_j = 0$, $j = 1,\cdots, n$, $\tilde{\phi} = 0$, a contradiction. Thus (\ref{e5:37}) holds.

From (\ref{e5:38}), there exists a certain $y_k$ with $|y_k|\to +\infty$ such that
\begin{equation*}
(\tau_2^k)^\nu(1+|y_k|^a)|\phi_k(y_k, \tau_2^k)|\geq \frac{1}{2}.
\end{equation*}
Let
\begin{equation*}
\tilde{\phi}_k(z, \tau):=(\tau_2^k)^\nu|y_k|^a\phi_k(y_k+|y_k|z,|y_k|^{2s}\tau + \tau_2^k),
\end{equation*}
then
\begin{equation*}
\partial_\tau \tilde{\phi}_k = -(-\Delta)^s\tilde{\phi}_k + a_k\tilde{\phi}_k + \tilde{h}_k(z,\tau),
\end{equation*}
where
\begin{equation*}
\tilde{h}_k(z,\tau) = (\tau_2^k)^\nu|y_k|^{2s+a}h_k(y_k+|y_k|z,|y_k|^{2s}\tau + \tau_2^k).
\end{equation*}
By the assumption on $h_k$, one has
\begin{equation*}
|\tilde{h}_k(z,\tau)| \lesssim o(1)|\hat{y}_k+z|^{-2s-a}((\tau_2^k)^{-1}|y_k|^{2s}\tau + 1)^{-\nu}
\end{equation*}
with
\begin{equation*}
\hat{y}_k = \frac{y_k}{|y_k|}\to -\hat{e}
\end{equation*}
and $|\hat{e}|= 1$. Thus $\tilde{h}_k(z,\tau)\to 0$ uniformly on compact subsets of $\mathbb{R}^n\setminus\{\hat{e}\}\times (-\infty, 0]$ and $a_k$ has the same property. Moreover, $|\tilde{\phi}_k(0, \tau_0)|\geq \frac{1}{2}$ and
\begin{equation*}
|\tilde{\phi}_k(z,\tau)| \lesssim |\hat{y}_k+z|^{-a}\left((\tau_2^k)^{-1}|y_k|^{2s}\tau + 1\right)^{-\nu}.
\end{equation*}
Hence we may assume $\tilde{\phi}_k\to \tilde{\phi}\neq 0$ uniformly on compact subsets of $\mathbb{R}^n\setminus\{\hat{e}\}\times (-\infty,0]$ with $\tilde{\phi}$ satisfying
\begin{equation}\label{e5:39}
\tilde{\phi}_\tau = -(-\Delta)^s\tilde{\phi}\quad\text{in }\mathbb{R}^n\setminus\{\hat{e}\}\times (-\infty,0]
\end{equation}
and
\begin{equation}\label{e5:40}
|\tilde{\phi}(z,\tau)|\leq |z-\hat{e}|^{-a}\quad\text{in }\mathbb{R}^n\setminus\{\hat{e}\}\times (-\infty,0].
\end{equation}
Similar to Lemma 5.2 of \cite{sireweizhenghalf}, functions $\tilde{\phi}$ satisfying (\ref{e5:39}) and (\ref{e5:40}) must be equal to zero (A proof can be found in \cite{chenwei2018}), which is a contradiction and we conclude the validity of (\ref{e5:34}). The proof is complete.
\end{proof}
{\it Proof of Proposition \ref{proposition5.1}}.
First, we consider the problem
\begin{equation*}
\left\{
\begin{aligned}
&\partial_\tau\phi = -(-\Delta)^s\phi + pU^{p-1}(y)\phi + h(y,\tau) - c(\tau)Z_0,~ y\in \mathbb{R}^n,~ \tau\geq \tau_0,\\
&\phi(y,\tau_0) = 0,~ y\in \mathbb{R}^n.
\end{aligned}
\right.
\end{equation*}
Let $(\phi(y,\tau),c(\tau))$ be the unique solution of the nonlocal initial value problem (\ref{e5:32}). From Lemma \ref{l5:3}, for any $\tau_1 > \tau_0$, we have
\begin{equation*}
|\phi(y,\tau)|\lesssim\tau^{-\nu}(1+|y|)^{-a}\|h\|_{2s+a, \tau_1}\text{ for all }\tau\in (\tau_0, \tau_1), \,\,y\in \mathbb{R}^n
\end{equation*}
and
\begin{equation*}
|c(\tau)|\leq \tau^{-\nu}R^{a}\|h\|_{2s+a,\tau_1}\text{ for all }\tau\in (\tau_0,\tau_1).
\end{equation*}
By assumption, $\|h\|_{2s+a,\nu, \eta} < +\infty$ and $\|h\|_{2s+a, \tau_1}\leq \|h\|_{2s+a,\nu, \eta}$ for an arbitrary $\tau_1$. It follows that
\begin{equation*}
|\phi(y,\tau)|\lesssim\tau^{-\nu}(1+|y|)^{-a}\|h\|_{2s+a,\nu, \eta}\text{ for all }\tau\in (\tau_0, \tau_1),\,\, y\in \mathbb{R}^n
\end{equation*}
and
\begin{equation*}
|c(\tau)|\leq \tau^{-\nu}R^{a}\|h\|_{2s+a,\nu, \eta}\text{ for all }\tau\in (\tau_0, \tau_1).
\end{equation*}
By the arbitrariness of $\tau_1$,
\begin{equation*}
|\phi(y,\tau)|\lesssim\tau^{-\nu}(1+|y|)^{-a}\|h\|_{2s+a,\nu, \eta}\text{ for all }\tau\in (\tau_0, +\infty),\,\, y\in \mathbb{R}^n
\end{equation*}
and
\begin{equation*}
|c(\tau)|\leq \tau^{-\nu}R^{a}\|h\|_{2s+a,\nu, \eta}\text{ for all }\tau\in (\tau_0, +\infty).
\end{equation*}
From the regularity result of \cite{silvestreium2012} and a scaling argument, we get the validity of (\ref{e5:100}) and (\ref{e5:101}).\qed
\subsection{The solvability conditions: choice of the parameters $\lambda$ and $\xi$.}
Denote
\begin{equation*}
\lambda(t) = \left(
\begin{matrix}
\lambda_1(t)\\
\lambda_2(t)\\
\vdots\\
\lambda_k(t)
\end{matrix}
\right),
\dot{\lambda}(t) = \left(
\begin{matrix}
\dot{\lambda}_1(t)\\
\dot{\lambda}_2(t)\\
\vdots\\
\dot{\lambda}_k(t)
\end{matrix}
\right),
\xi(t) = \left(
\begin{matrix}
\xi_1(t)\\
\xi_2(t)\\
\vdots\\
\xi_k(t)
\end{matrix}
\right),
\dot{\xi}(t) = \left(
\begin{matrix}
\dot{\xi}_1(t)\\
\dot{\xi}_2(t)\\
\vdots\\
\dot{\xi}_k(t)
\end{matrix}
\right),
q = \left(
\begin{matrix}
q_1\\
q_2\\
\vdots\\
q_k
\end{matrix}
\right).
\end{equation*}
First we consider (\ref{e5:7}) in the case $l = n+1$.
\begin{lemma}\label{l5:1}
When $l = n+1$, (\ref{e5:7}) is equivalent to
\begin{equation}\label{e5:11}
\dot{\lambda}_j + \frac{1}{t}\left(P^Tdiag\left(\frac{(2s-1)\bar{\sigma}_rb_r^{2-2s} + 1}{n-4s}\right)P\lambda\right)_j = \Pi_1[\lambda, \xi, \dot{\lambda}, \dot{\xi}, \phi](t)
\end{equation}
where the matrix $P$, the numbers $\bar{\sigma}_r > 0$ and $b_r > 0$ are defined in Section 2. The right hand side term can be expressed as
\begin{equation}\label{expressionpi1}
\begin{aligned}
\Pi_1[\lambda, \xi, \dot{\lambda}, \dot{\xi}, \phi](t) =& \frac{t_0^{-\varepsilon}}{R^{a-2s}}\mu_0^{n+1-4s + \sigma}(t)f(t)\\
& + \frac{t_0^{-\varepsilon}}{R^{a-2s}}\Theta\left[\dot{\lambda},\dot{\xi}, \mu_0^{n-4s}(t)\lambda, \mu_0^{n-4s}(\xi-q), \mu_0^{n+1-4s+\sigma}\phi\right](t)
\end{aligned}
\end{equation}
where $f(t)$ and $\Theta\left[\dot{\lambda},\dot{\xi}, \mu_0^{n-4s}(t)\lambda, \mu_0^{n-4s}(\xi-q), \mu_0^{n+1-4s+\sigma}\phi\right](t)$ are smooth and bounded functions for $t\in [t_0,\infty)$. Further, the following estimates hold,
\begin{equation*}
\left|\Theta[\dot{\lambda}_1](t) - \Theta[\dot{\lambda}_2](t)\right|\lesssim \frac{t_0^{-\varepsilon}}{R^{a-2s}}|\dot{\lambda}_1(t) - \dot{\lambda}_2(t)|
\end{equation*}
\begin{equation*}
\left|\Theta[\dot{\xi}_1](t) - \Theta[\dot{\xi}_2](t)\right|\lesssim \frac{t_0^{-\varepsilon}}{R^{a-2s}}|\dot{\xi}_1(t) - \dot{\xi}_2(t)|,
\end{equation*}
\begin{equation*}
\left|\Theta[\mu_0^{n-4s}\lambda_1](t) - \Theta[\mu_0^{n-4s}\lambda_2](t)\right|\lesssim \frac{t_0^{-\varepsilon}}{R^{a-2s}}|\dot{\lambda}_1(t) - \dot{\lambda}_2(t)|
\end{equation*}
\begin{equation*}
\left|\Theta[\mu_0^{n-4s}(\xi_1-q)](t) - \Theta[\mu_0^{n-4s}(\xi_2-q)](t)\right|\lesssim \frac{t_0^{-\varepsilon}}{R^{a-2s}}|\xi_1(t) - \xi_2(t)|,
\end{equation*}
\begin{equation}\label{e5:104}
\left|\Theta[\mu_0^{n+1-4s+\sigma}\phi_1](t) - \Theta[\mu_0^{n+1-4s+\sigma}\phi_2](t)\right|\lesssim \frac{t_0^{-\varepsilon}}{R^{a-2s}}\|\phi_1(t) - \phi_2(t)\|_{n-2s+\sigma, a}.
\end{equation}
\end{lemma}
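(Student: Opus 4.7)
The orthogonality condition (\ref{e5:7}) with $l=n+1$ reduces, after inserting the definition (\ref{e5:2}) of $H_j$, to
\begin{equation*}
\int_{B_{2R}}\!\!\left[\mu_{0j}^{\frac{n+2s}{2}}S_{\mu,\xi,j}(\xi_j+\mu_{0j}y,t)+B_j[\phi_j]+B_j^0[\phi_j]+p\,\mu_{0j}^{\frac{n+2s}{2}}\mu_j^{-2s}U^{p-1}\!\left(\tfrac{\mu_{0j}}{\mu_j}y\right)\psi\right]Z_{n+1}(y)\,dy=0.
\end{equation*}
My plan is to expand each summand using Lemma~\ref{l2.2} (for $S_{\mu,\xi,j}$) together with the definitions (\ref{e3:11})--(\ref{e3:12}) of $B_j,B_j^0$, project onto $Z_{n+1}$, isolate the terms linear in $\dot\lambda_j$ and in $\lambda$, and divide by the leading $\dot\lambda_j$ coefficient to obtain (\ref{e5:11}).

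The dominant $\dot\lambda_j$ contribution comes from the first line of the expansion of $S_{\mu,\xi,j}$ in Lemma~\ref{l2.2}: after accounting for $\mu_{0j}^{(n+2s)/2}\cdot \mu_j^{-(n+2s)/2}\sim 1$ it carries the prefactor $b_j^{2s-1}\mu_0^{2s-1}$ and an integral
$$\int_{\mathbb R^n}\!\Big(Z_{n+1}+\tfrac{n-2s}{2}\alpha_{n,s}(1+|y|^2)^{-\frac{n-2s}{2}}-2sA\,pU^{p-1}\Big)Z_{n+1}\,dy,$$
which is an explicit nonzero constant by the definitions of $c_1,c_2$ and the value of $A$ computed in (\ref{e:2018330}). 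The remaining integrable contributions to $\int S_{\mu,\xi,j}Z_{n+1}$ that are linear in $\lambda$ come from the $M_{ij}\lambda_i$ term and the $\lambda_jb_j^{2s-1}$ block in Lemma~\ref{l2.2}. Replacing $\dot\mu_0$ by its closed form (\ref{e2:40}) shows each piece is of order $\mu_0^{n-2s-1}=\mu_0^{2s-1}\mu_0^{n-4s}\sim\mu_0^{2s-1}/t$. After dividing by the $\dot\lambda_j$ coefficient and invoking the spectral identity $M=P^T\mathrm{diag}(\bar\sigma_r+\tfrac{2s(2s-1)}{n-2s}b_r^{2-2s})P$ derived in Section~2, the linear operator acting on $\lambda$ consolidates to $\frac1t\,P^T\mathrm{diag}\!\big(\tfrac{(2s-1)\bar\sigma_rb_r^{2-2s}+1}{n-4s}\big)P$, exactly the matrix in (\ref{e5:11}). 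This algebraic collapse, which relies on the calibration of $c_{n,s}$ in Section~2, will be the delicate step of the proof.

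All remaining terms go into $\Pi_1$. The $\dot\xi_j\cdot y_j$ and $\nabla H(q_j,q_j)\cdot y_j$ contributions in $S_{\mu,\xi,j}$ are odd in $y_j$, hence their $Z_{n+1}$-projections vanish at leading order and produce only the smooth, bounded factor $\mu_0^{n+1-4s+\sigma}f(t)$ in (\ref{expressionpi1}). The remainder $\mathcal{R}_j$ of Lemma~\ref{l2.1} and the lower-order entries of Lemma~\ref{l2.2} repackage into the $\Theta$-type pieces in $\dot\lambda$, $\dot\xi$, $\mu_0^{n-4s}\lambda$, $\mu_0^{n-4s}(\xi-q)$ and $\mu_0^{n+1-4s+\sigma}\phi$, each multiplied by smooth bounded functions of the arguments listed in Lemma~\ref{l2.2}. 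The integrals of $B_j[\phi_j]$, $B_j^0[\phi_j]$ and of the $\psi$-term against $Z_{n+1}$ on $B_{2R}$ will be controlled using (\ref{e4:21})--(\ref{e4:24}) together with the outer estimate (\ref{e4:27}); the localization $\chi_{B_{2R}}$ and the decay of $\phi$, $U^{p-1}$ and $\psi$ for $|y|\leq 2R$ produce the prefactor $t_0^{-\varepsilon}R^{-(a-2s)}$. Finally, the Lipschitz bounds culminating in (\ref{e5:104}) follow by differentiating each integrand in its respective argument and invoking Proposition~\ref{p4:4.2} for the $\psi$-dependent parts. The principal obstacle is the algebraic identification of the coefficient matrix on the left side of (\ref{e5:11}); everything else is careful but routine bookkeeping of powers of $\mu_0$ and factors of $R^{-(a-2s)}$.
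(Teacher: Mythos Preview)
Your plan is correct and matches the paper's own proof essentially step for step: decompose $\mu_{0j}^{\frac{n+2s}{2}}S_{\mu,\xi,j}$ into the three pieces $S_1,S_2,S_3$ of Lemma~\ref{l2.2}, project each onto $Z_{n+1}$ (with $\int S_3 Z_{n+1}=0$ by symmetry), divide through by the $\dot\lambda_j$ coefficient, and assemble the $\lambda$-matrix via the spectral decomposition of $M=D^2I_0(b)$; then treat the $\psi$-term and $B_j,B_j^0$ as remainders, with the Lipschitz bounds coming from Proposition~\ref{p4:4.2}.

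Two small points the paper makes explicit that you should not skip in execution. First, because the inner variable in $H_j$ is $\xi_j+\mu_{0j}y$ while $S_{\mu,\xi,j}$ is naturally written in terms of $\xi_j+\mu_j y$, the paper systematically isolates the correction $S_l(\xi_j+\mu_{0j}y)-S_l(\xi_j+\mu_j y)$ and shows it contributes only $\Theta$-type terms; your remark ``$\mu_{0j}^{(n+2s)/2}\mu_j^{-(n+2s)/2}\sim 1$'' is not quite enough to cover this. Second, for the $\psi$-integral the paper splits $I=I_1+I_2+I_3$ with $I_1$ handled by (\ref{e4:27}), $I_2$ by the H\"older estimate (\ref{e4:28}) (not just (\ref{e4:27})), and $I_3$ by the mean value theorem plus Proposition~\ref{p4:4.2}; you will need (\ref{e4:28}) to control the difference $\psi(\xi_j+\mu_{0j}y,t)-\psi(q_j,t)$ over $B_{2R}$.
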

\begin{proof}
Suppose $\phi$ satisfies (\ref{e4:25}). For a fixed $j \in \{1,\cdots,k\}$, we compute
\begin{eqnarray*}
\int_{B_{2R}}H_j[\phi,\lambda,\xi,\dot{\lambda},\dot{\xi}](y,t(\tau))Z_{n+1}(y)dy,
\end{eqnarray*}
where $H_j$ is given by (\ref{e5:2}). Decompose
\begin{equation*}
\begin{aligned}
&\mu_{0j}^{\frac{n+2s}{2}}S_{\mu, \xi, j}(\xi_j + \mu_{0j}y, t)\\
&=\left(\frac{\mu_{0j}}{\mu_j}\right)^{\frac{n+2s}{2}}\left[\mu_{0j}S_1(z, t) + \lambda_jb_j^{2s-1}S_2(z, t) + \mu_jS_3(z, t)\right]_{z = \xi_j + \mu_jy}\\
&\quad+\left(\frac{\mu_{0j}}{\mu_j}\right)^{\frac{n+2s}{2}}\mu_{0j}\left[S_1(\xi_j+\mu_{0j}y, t)-S_1(\xi_j+\mu_{j}y, t)\right]\\
&\quad+\left(\frac{\mu_{0j}}{\mu_j}\right)^{\frac{n+2s}{2}}\lambda_jb_j^{2s-1}\left[S_2(\xi_j+\mu_{0j}y, t)-S_2(\xi_j+\mu_{j}y, t)\right]\\
&\quad+\left(\frac{\mu_{0j}}{\mu_j}\right)^{\frac{n+2s}{2}}\mu_j\left[S_3(\xi_j+\mu_{0j}y, t)-S_3(\xi_j+\mu_{j}y, t)\right],
\end{aligned}
\end{equation*}
where
\begin{equation*}
\begin{aligned}
&S_1(z)\\
&= (b_j\mu_0)^{2s-2}\dot{\lambda}_j\\
&\times\left(Z_{n+1}\left(\frac{z-\xi_j}{\mu_j}\right) + \frac{n-2s}{2}\alpha_{n, s}\frac{1}{\left(1+\left|\frac{z-\xi_j}{\mu_j}\right|^2\right)^{\frac{n-2s}{2}}}-2sApU\left(\frac{z-\xi_j}{\mu_j}\right)^{p-1}\right)\\
&\quad -\mu_0^{n-2s-2}pU(y_j)^{p-1}\sum_{i=1}^kM_{ij}\lambda_i,
\end{aligned}
\end{equation*}
\begin{equation*}
\begin{aligned}
S_2(z) =& (2s-1)\mu_0^{2s-2}\dot{\mu}_0\left(Z_{n+1}\left(\frac{z-\xi_j}{\mu_j}\right) + \frac{n-2s}{2}\alpha_{n, s}\frac{1}{\left(1+\left|\frac{z-\xi_j}{\mu_j}\right|^2\right)^{\frac{n-2s}{2}}}\right)\\
&+pU\left(\frac{z-\xi_j}{\mu_j}\right)^{p-1}\mu_0^{n-2s-1}\\
&\times\bigg(-b_j^{n-4s}H(q_j, q_j) + \sum_{i\neq j}b_j^{\frac{n-6s}{2}}b_i^{\frac{n-2s}{2}}G(q_j, q_i)+(2s-1)B\bigg)
\end{aligned}
\end{equation*}
and
\begin{equation*}
\begin{aligned}
S_3(z) =& \mu_j^{2s-2}\alpha_{n, s}(n-2s)\frac{\dot{\xi}_j\cdot \frac{z-\xi_j}{\mu_j}}{\left(1+\left|\frac{z-\xi_j}{\mu_j}\right|^2\right)^{\frac{n-2s}{2}+1}}+ pU\left(\frac{z-\xi_j}{\mu_j}\right)^{p-1}\\
&\times\left(-\mu_j^{n-2s}\nabla H(q_j, q_j) + \sum_{i\neq j}\mu_j^{\frac{n-2s}{2}}\mu_i^{\frac{n-2s}{2}}\nabla G(q_j, q_i)\right)\cdot \left(\frac{z-\xi_j}{\mu_j}\right).
\end{aligned}
\end{equation*}
By direct computations, we have
\begin{equation*}
\begin{aligned}
\int_{B_{2R}}S_1(\xi_j+\mu_jy)Z_{n+1}(y)dy &= (2sAc_1+c_2)(1+O(R^{4s-n}))\dot{\lambda}_j(b_j\mu_0)^{2s-2} \\
&\quad  + c_1(1+O(R^{-2s}))\mu_0^{n-2s-2}\sum_{i=1}^kM_{ij}\lambda_i,
\end{aligned}
\end{equation*}
\begin{equation*}
\begin{aligned}
&\int_{B_{2R}}S_2(\xi_j+\mu_jy)Z_{n+1}(y)dy\\
&\quad =  -(2s-2)\mu_0^{n-2s-1}\frac{2sAc_1 + c_2}{(n-4s)c_{n, s}^{n-4s}}+ O(R^{4s-n}+R^{-2s})\mu_{0}^{n-2s-1}\\
&\quad = -(2s-2)\mu_0^{n-2s-1}\frac{2sc_1}{(n-2s)}+ O(R^{4s-n}+R^{-2s})\mu_{0}^{n-2s-1}
\end{aligned}
\end{equation*}
and
\begin{equation*}
\int_{B_{2R}}S_3(\xi_j+\mu_jy)Z_{n+1}(y)dy = 0 \,\,(\text{by symmetry}).
\end{equation*}
Since $\frac{\mu_{0j}}{\mu_j} = (1 + \frac{\lambda_j}{\mu_{0j}})^{-1}$, for any $l = 1, 2, 3$, we have
\begin{equation*}
\begin{aligned}
&\int_{B_{2R}}[S_l(\xi_j+\mu_{0j}y, t)-S_l(\xi_j+\mu_{j}y, t)]Z_{n+1}(y)dy \\
&= g(t,\frac{\lambda}{\mu_0})\mu_0^{2s-2}\dot{\lambda}_j + g(t, \frac{\lambda}{\mu_0})\mu_0^{2s-2}\dot{\xi} +g(t,\frac{\lambda}{\mu_0})\sum_i\mu_0^{n-2s-2}\lambda_i + \mu_0^{n-2s-1+\sigma}f(t),
\end{aligned}
\end{equation*}
where $f$, $g$ are smooth and bounded functions such that $g(\cdot, s)\sim s$ as $s\to 0$. Thus
\begin{equation*}
\begin{aligned}
& c\left(\frac{\mu_{j}}{\mu_{0j}}\right)^{\frac{n+2s}{2}}\mu_{0j}^{1-2s}\int_{B_{2R}}\mu_{0j}^{\frac{n+2s}{2}}S_{\mu, \xi, j}(\xi_j + \mu_{0j}y, t)Z_{n+1}(y)dy\\
& = \left[\dot{\lambda}_j + \frac{1}{t}\left(P^Tdiag\left(\frac{\frac{n-2s}{2s}\bar{\sigma}_rb_r^{2-2s} + 1}{n-4s}\right)P\lambda\right)_j\right]\\
&\quad + \frac{t_0^{-\varepsilon}}{R^{a-2s}}g(t,\frac{\lambda}{\mu_0})(\dot{\lambda} + \dot{\xi}) + \frac{t_0^{-\varepsilon}}{R^{a-2s}}\mu_0^{n-4s}g(t, \frac{\lambda}{\mu_0}),
\end{aligned}
\end{equation*}
where $c$ is a positive number, the function $g$ is smooth, bounded and $g(\cdot, s)\sim s$ as $s\to 0$.

Next we compute $p\mu_{0j}^{\frac{n-2s}{2}}(1 + \frac{\lambda_j}{\mu_{0j}})^{-2s}\int_{B_{2R}}U^{p-1}(\frac{\mu_{0j}}{\mu_j}y)\psi(\xi_j + \mu_{0j}y, t)Z_{n+1}(y)dy$. The principal part is $I: = \int_{B_{2R}}U^{p-1}(y)\psi(\xi_j + \mu_{0j}y, t)Z_{n+1}(y)dy$. Recall $\psi = \psi[\lambda,\xi,\dot{\lambda},\dot{\xi}, \phi](y, t)$, we have
\begin{equation*}
\begin{aligned}
I &=\psi[0,q,0,0,0](q_j, t)\int_{B_{2R}}U^{p-1}(y)Z_{n+1}(y)dy\\
&\quad + \int_{B_{2R}}U^{p-1}(y)Z_{n+1}(y)(\psi[0,q,0,0,0](\xi_j+\mu_{0j}y, t)-\psi[0,q,0,0,0](q_j, t))dy\\
&\quad + \int_{B_{2R}}U^{p-1}(y)Z_{n+1}(y)(\psi[\lambda,\xi,\dot{\lambda},\dot{\xi}, \phi] - \psi[0,q,0,0,0])(\xi_j+\mu_{0j}y, t)dy\\
& = I_1 + I_2 + I_3.
\end{aligned}
\end{equation*}
By (\ref{e4:27}), $I_1 = \frac{t_0^{-\varepsilon}}{R^{a-2s}}\mu_0^{\frac{n-2s}{2}+\sigma}f(t)$ with $f$ smooth and bounded. By (\ref{e4:28}), $I_2 = \frac{t_0^{-\varepsilon}}{R^{a-2s}}\mu_0^{\frac{n-2s}{2}+\sigma}g(t, \frac{\lambda}{\mu_0}, \xi -q)$ for a smooth and bounded function $g$ satisfying $g(\cdot, s, \cdot)\sim s$ and $g(\cdot,\cdot, s)\sim s$ as $s \to 0$. From the mean value theorem again, we have
\begin{equation*}
\begin{aligned}
I_3 =& \int_{B_{2R}}U^{p-1}(y)Z_{n+1}(y)\bigg[\partial_\lambda\psi[0,q,0,0,0][s\lambda](\xi_j + \mu_{0j}y, t)\\
&+ \partial_\xi\psi[0,q,0,0,0][s(\xi_j-q_j)](\xi_j + \mu_{0j}y, t)+ \partial_{\dot{\lambda}}\psi[0,q,0,0,0][s\dot{\lambda}](\xi_j + \mu_{0j}y, t)\\
&+\partial_{\dot{\xi}}\psi[0,q,0,0,0][s\dot{\xi}](\xi_j + \mu_{0j}y, t) + \partial_{\phi}\psi[0,q,0,0,0][s\phi](\xi_j + \mu_{0j}y, t)\bigg]dy
\end{aligned}
\end{equation*}
for some $s\in (0, 1)$.
Using Proposition \ref{p4:4.2}, $I_3$ is the sum of terms like
\begin{equation*}
\mu_0^{-\frac{n-6s}{2}-1+\sigma}\frac{t_0^{-\varepsilon}}{R^{a-2s}}f(t)(\dot{\lambda} + \dot{\xi})F[\lambda, \xi, \dot{\lambda}, \dot{\xi}, \phi](t)
\end{equation*}
and
$$
\mu_0^{\frac{n-2s}{2}-1}\frac{t_0^{-\varepsilon}}{R^{a-2s}}f(t)(\lambda + \xi)F[\lambda, \xi, \dot{\lambda}, \dot{\xi}, \phi](t),
$$
where $f$ is a smooth, bounded function and $F$ is a nonlocal operator satisfying $F[0,q,0,0,0](t)$ bounded.

Now, we consider the terms $B_j[\phi_j]$, $B_j^0[\phi_j]$ and obtain that
\begin{equation*}
\int_{B_{2R}}B_j[\phi_j](y, t)Z_{n+1}(y)dy = \frac{t_0^{-\varepsilon}}{R^{a-2s}}[\mu_0^{n+1-4s+\sigma}(t)\ell[\phi](t) + \dot{\xi}_j\ell[\phi](t)]
\end{equation*}
and
\begin{equation*}
\int_{B_{2R}}B^0_j[\phi_j](y, t)Z_{n+1}(y)dy = \frac{t_0^{-\varepsilon}}{R^{a-2s}}\mu_0^{n-2s-1}g\left(\frac{\lambda}{\mu_0}\right)[\phi](t)
\end{equation*}
for a smooth function $g(s)$ satisfying $g(s)\sim s$ as $s\to 0$, $\ell[\phi](t)$ is smooth and bounded in $t$.
Combining the above estimates, we conclude the result.
\end{proof}

Similarly, we compute
\begin{equation}
\int_{B_{2R}}H_j[\lambda,\xi, \dot{\lambda}, \dot{\xi},\phi](y, t(\tau))Z_l(y)dy,
\end{equation}
for any $j = 1, \cdots, k$, $l = 1,\cdots, n$. We have
\begin{lemma}\label{l5:2}
For $j = 1, \cdots, k$, $l = 1,\cdots, n$, (\ref{e5:7}) is equivalent to
\begin{equation}\label{e5:18}
\dot{\xi}_j = \Pi_{2, j}[\lambda, \xi, \dot{\lambda}, \dot{\xi}, \phi](t),
\end{equation}
\begin{equation*}
\begin{aligned}
&\Pi_{2, j}[\lambda, \xi, \dot{\lambda}, \dot{\xi}, \phi](t)\\
&= \mu_0^{n-4s+2}c\left[b_j^{n-2s}\nabla H(q_j, q_j) - \sum_{i\neq j}b_j^{\frac{n-2s}{2}}b_i^{\frac{n-2s}{2}}\nabla G(q_j, q_i)\right]+ \mu_0^{n-4s+2+\sigma}(t)f_j(t)\\
&\quad + \frac{t_0^{-\varepsilon}}{R^{a-2s}}\Theta[\dot{\lambda},\dot{\xi}, \mu_0^{n-2s-2}(t)\lambda, \mu_0^{n-2s-1}(\xi-q), \mu_0^{n+1-4s+\sigma}\phi](t),
\end{aligned}
\end{equation*}
where $c = \frac{p\int_{\mathbb{R}^n}U^{p-1}\frac{\partial U}{\partial y_1}y_1dy}{\int_{\mathbb{R}^n}\left(\frac{\partial U}{\partial y_1}\right)^2dy}$, $f_j(t)$ is an $n$ dimensional vector function which is smooth and bounded for $t\in [t_0, \infty)$. The function $\Theta$ has the same properties as in Lemma \ref{l5:1}.
\end{lemma}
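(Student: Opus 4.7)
The plan is to follow the same template as the proof of Lemma \ref{l5:1}, but to exploit parity against the translation modes $Z_l = \partial U/\partial y_l$ for $l=1,\ldots,n$. I decompose $\mu_{0j}^{(n+2s)/2}S_{\mu,\xi,j}(\xi_j+\mu_{0j}y,t)$ exactly as in the proof of Lemma \ref{l5:1} into the three pieces $S_1,S_2,S_3$ evaluated at $z=\xi_j+\mu_j y$ plus the correction terms coming from replacing $\mu_{0j}$ by $\mu_j$ in the argument. The key new observation is purely a symmetry consideration: the integrands of $S_1$ and $S_2$ against $Z_l$ are products of radial functions (namely $Z_{n+1}(y_j)$, $(1+|y_j|^2)^{-(n-2s)/2}$ and $U(y_j)^{p-1}$) with the anti-symmetric function $Z_l(y)=\partial_{y_l}U$, so by parity
\[
\int_{B_{2R}} S_1(\xi_j+\mu_j y)\,Z_l(y)\,dy = \int_{B_{2R}} S_2(\xi_j+\mu_j y)\,Z_l(y)\,dy = 0.
\]
Thus only $S_3$ contributes at main order.

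For $\int_{B_{2R}} S_3(\xi_j+\mu_j y)\,Z_l(y)\,dy$, a direct computation (using $\int_{\mathbb{R}^n} y_ly_m(1+|y|^2)^{-(n-2s)/2-1}dy = C_0\,\delta_{lm}$ and $\int_{\mathbb{R}^n} U^{p-1}(y)\,y_l Z_l(y)\,dy = C_1 \neq 0$ with vanishing off-diagonal contributions) yields
\[
\int_{B_{2R}} S_3(\xi_j+\mu_j y)\,Z_l(y)\,dy \;=\; \tilde{C}_0\,\mu_j^{2s-2}\,\dot\xi_{j,l} \;+\; \tilde{C}_1\,\mu_j^{n-2s}\Big[-\nabla H(q_j,q_j)+\sum_{i\neq j}(\mu_i/\mu_j)^{\frac{n-2s}{2}}\nabla G(q_j,q_i)\Big]_l + O(R^{-\sigma}).
\]
After multiplying by the prefactor $(\mu_j/\mu_{0j})^{(n+2s)/2}\mu_j$, extracting an overall factor of $\mu_{0j}^{2s-1}$ (so that the $\dot\xi_j$ coefficient becomes $\tilde{C}_0/\tilde{C}_0=1$ in the normalized equation), and using $\mu_j^{n-2s+1}\cdot\mu_{0j}^{1-2s} = \mu_0^{n-4s+2}\cdot(\text{bounded smooth in }\lambda/\mu_0)$, one obtains precisely the leading term of $\Pi_{2,j}$ with the advertised constant $c = p\int U^{p-1}\partial_{y_1}U\cdot y_1\,dy\,/\!\int|\partial_{y_1}U|^2\,dy$, absorbing the lower-order $\mu_0^{n-4s+2+\sigma}f_j(t)$ remainder from the $S_3$-correction terms.

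The correction terms $S_l(\xi_j+\mu_{0j}y)-S_l(\xi_j+\mu_j y)$ (small because $\mu_{0j}/\mu_j-1=O(\lambda/\mu_0)$), the nonlocal term $p\mu_{0j}^{(n-2s)/2}(\mu_{0j}/\mu_j)^{2s}U^{p-1}((\mu_{0j}/\mu_j)y)\psi(\xi_j+\mu_{0j}y,t)$, and the commutator terms $B_j[\phi_j]$, $B_j^0[\phi_j]$ are estimated \emph{verbatim} as in Lemma \ref{l5:1}: decompose $\psi = \psi[0,q,0,0,0] + \big(\psi[\lambda,\xi,\dot\lambda,\dot\xi,\phi]-\psi[0,q,0,0,0]\big)$, apply the mean value theorem in the parameters, and use the pointwise bounds \eqref{e4:27}--\eqref{e4:28} together with the derivative estimates \eqref{e4:64}--\eqref{e4:84} from Proposition \ref{p4:4.2}. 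These all contribute to the $\Theta$-functional, of size $\tfrac{t_0^{-\varepsilon}}{R^{a-2s}}$ with the stated arguments; the Lipschitz estimates on $\Theta$ in $\dot\lambda$, $\dot\xi$, $\mu_0^{n-2s-2}\lambda$, $\mu_0^{n-2s-1}(\xi-q)$ and $\mu_0^{n+1-4s+\sigma}\phi$ follow at once from the Lipschitz dependence assertions of Proposition \ref{p4:4.2}, exactly as in \eqref{e5:104}.

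The main obstacle is not conceptual but purely bookkeeping. One must carefully verify that every term estimated in the proof of Lemma \ref{l5:1} either (a) vanishes by parity when tested against $Z_l$, or (b) decays by one extra power of $\mu_0$ relative to its Lemma \ref{l5:1} counterpart, in order to match the correct scaling $\mu_0^{n-4s+2}$ in front of the Green's function gradient (versus the $\mu_0^{n-4s}$ scale appearing for $\lambda$ in Lemma \ref{l5:1}). A small subtlety is that the replacement $U^{p-1}(y) \mapsto U^{p-1}((\mu_{0j}/\mu_j)y)$ in the $\psi$ term preserves radial symmetry, so the leading parity cancellations for the $l \le n$ modes persist, and only the perturbative $O(\lambda/\mu_0)$ piece of this factor can contribute to $\Theta$.
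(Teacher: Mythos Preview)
Your proposal is correct and follows exactly the approach the paper intends: the paper's own proof of Lemma \ref{l5:2} is simply ``The proof of Lemma \ref{l5:2} is similar to that of Lemma \ref{l5:1} so we omit it,'' and your write-up supplies precisely those omitted details, with the key parity observation that $\int_{B_{2R}}S_1 Z_l = \int_{B_{2R}}S_2 Z_l = 0$ so that only $S_3$ survives at main order. One very minor quibble: in your final paragraph, the relevant parity cancellation in the $\psi$ term is that the $I_1$ piece $\psi[0,q,0,0,0](q_j,t)\int_{B_{2R}}U^{p-1}Z_l\,dy$ vanishes (not merely the $O(\lambda/\mu_0)$ correction to the potential), but this does not affect the argument since the remaining $I_2,I_3$ pieces are already absorbed into $\Theta$ exactly as in Lemma \ref{l5:1}.
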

The proof of Lemma \ref{l5:2} is similar to that of Lemma \ref{l5:1} so we omit it.

From Lemma \ref{l5:1} and Lemma \ref{l5:2}, we know that the orthogonality conditions
\begin{equation*}
\int_{B_{2R}}H_j[\lambda,\xi, \dot{\lambda}, \dot{\xi},\phi](y, t(\tau))Z_l(y)dy, \mbox{ for } j = 1,\cdots, k \mbox{ and } l = 1, \cdots, n+1,
\end{equation*}
are equivalent to the system of ODEs for $\lambda$ and $\xi$
\begin{equation}\label{e5:9}
\left\{
\begin{aligned}
&\dot{\lambda}_j + \frac{1}{t}\left(P^Tdiag\left(\frac{\frac{n-2s}{2s}\bar{\sigma}_rb_r^{2-2s} + 1}{n-4s}\right)P\lambda\right)_j = \Pi_1[\lambda, \xi, \dot{\lambda}, \dot{\xi}, \phi](t),\\
&\dot{\xi}_j = \Pi_{2, j}[\lambda, \xi, \dot{\lambda}, \dot{\xi}, \phi](t), j = 1,\cdots, k.
\end{aligned}
\right.
\end{equation}
System (\ref{e5:9}) is solvable for parameters $\lambda$ and $\xi$ satisfying (\ref{e4:21}) and (\ref{e4:22}). Indeed, we have
\begin{prop}\label{p5:5.1}
There exists a solution $\lambda = \lambda[\phi](t)$, $\xi = \xi[\phi](t)$ to (\ref{e5:9}) satisfying (\ref{e4:21}) and (\ref{e4:22}). For $t\in (t_0, \infty)$, it holds that
\begin{equation}\label{e5:12}
\mu_0^{-(1+\sigma)}(t)\big|\lambda[\phi_1](t) - \lambda[\phi_2](t)\big|\lesssim \frac{t_0^{-\varepsilon}}{R^{a-2s}}\|\phi_1 - \phi_2\|_{n-2s+\sigma, a}
\end{equation}
and
\begin{equation}\label{e5:13}
\mu_0^{-(1+\sigma)}(t)\big|\xi[\phi_1](t) - \xi[\phi_2](t)\big|\lesssim \frac{t_0^{-\varepsilon}}{R^{a-2s}}\|\phi_1 - \phi_2\|_{n-2s+\sigma, a}.
\end{equation}
\end{prop}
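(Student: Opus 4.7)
The strategy is to recast system (\ref{e5:9}) as a Banach fixed point problem on the space
\begin{equation*}
\mathcal{X} := \bigl\{(\lambda, \xi) \in C^1([t_0, \infty); \mathbb{R}^k \times \mathbb{R}^{nk}) : \|(\lambda, \xi)\|_\mathcal{X} < \infty\bigr\},
\end{equation*}
with norm $\|(\lambda, \xi)\|_\mathcal{X} := \|\lambda\|_{1+\sigma} + \|\dot\lambda\|_{n-4s+1+\sigma} + \|\xi-q\|_{1+\sigma} + \|\dot\xi\|_{n-4s+1+\sigma}$, restricted to the closed ball $\mathcal{B}_* := \{\|(\lambda,\xi)\|_\mathcal{X} \leq c_*/R^{a-2s}\}$ for a large universal constant $c_*$. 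The first step is to diagonalize the linear part of the $\lambda$-equation using the orthogonal matrix $P$ from (\ref{e2:46}): setting $\tilde\lambda := P\lambda$, the first line of (\ref{e5:9}) decouples into scalar equations
\begin{equation*}
\dot{\tilde\lambda}_r + \frac{m_r}{t}\tilde\lambda_r = (P\Pi_1)_r,\qquad m_r := \frac{1}{n-4s}\left(\tfrac{n-2s}{2s}\bar\sigma_r b_r^{2-2s} + 1\right) > 0,\ r = 1,\dots,k.
\end{equation*}
The crucial algebraic identity
\begin{equation*}
m_r > \frac{1+\sigma}{n-4s} \;\Longleftrightarrow\; \sigma < \tfrac{n-2s}{2s}\bar\sigma_r b_r^{2-2s}
\end{equation*}
holds strictly under hypothesis (\ref{e4:29}), and it is exactly the condition that makes the integrating-factor particular solution carry the decay profile $\mu_0^{1+\sigma}$.

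Using the integrating factor $t^{m_r}$ with the normalization $\tilde\lambda_r(t_0) = 0$, I define the operator $\mathcal{T}=(\mathcal{T}_1,\mathcal{T}_2):\mathcal{B}_*\to \mathcal{X}$ by
\begin{equation*}
\mathcal{T}_1(\lambda,\xi)(t) := P^T\tilde\lambda(t),\quad \tilde\lambda_r(t) := t^{-m_r}\int_{t_0}^t s^{m_r}(P\Pi_1)_r[\lambda,\xi,\dot\lambda,\dot\xi,\phi](s)\,ds,
\end{equation*}
\begin{equation*}
\mathcal{T}_2(\lambda,\xi)_j(t) := q_j - \int_t^\infty \Pi_{2,j}[\lambda,\xi,\dot\lambda,\dot\xi,\phi](s)\,ds.
\end{equation*}
Lemma \ref{l5:1} provides $|\Pi_1(t)| \lesssim \frac{t_0^{-\varepsilon}}{R^{a-2s}}\mu_0^{n-4s+1+\sigma}(t)$, and the elementary identity $\int_{t_0}^t s^{m_r-1-(1+\sigma)/(n-4s)}\,ds \lesssim t^{m_r - (1+\sigma)/(n-4s)}$ (valid because $m_r - (1+\sigma)/(n-4s) > 0$) yields $|\mathcal{T}_1(\lambda,\xi)(t)| \lesssim \frac{t_0^{-\varepsilon}}{R^{a-2s}}\mu_0^{1+\sigma}(t)$; the derivative bound comes directly from the ODE. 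Lemma \ref{l5:2} gives $|\Pi_{2,j}(t)| \lesssim \mu_0^{n-4s+2}(t)$, so $|\mathcal{T}_2(\lambda,\xi) - q| \lesssim \mu_0^2(t) \lesssim R^{-(a-2s)}\mu_0^{1+\sigma}(t)$ provided $\rho$ in (\ref{e3:20}) is small enough that $\rho(a-2s) < (1-\sigma)/(n-4s)$ and $t_0$ is large. Hence $\mathcal{T}$ maps $\mathcal{B}_*$ into itself.

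To close the argument I will establish that $\mathcal{T}$ is a strict contraction on $\mathcal{B}_*$ by invoking the $\lambda,\xi,\dot\lambda,\dot\xi$-Lipschitz estimates of Lemma \ref{l5:1} and their analogues for $\Pi_2$. These produce, for any pair $(\lambda_i, \xi_i) \in \mathcal{B}_*$,
\begin{equation*}
\bigl|\Pi_\ell[\lambda_1,\xi_1,\dot\lambda_1,\dot\xi_1,\phi] - \Pi_\ell[\lambda_2,\xi_2,\dot\lambda_2,\dot\xi_2,\phi]\bigr|(t) \lesssim \frac{t_0^{-\varepsilon}}{R^{a-2s}}\,\mu_0^{n-4s+1+\sigma}(t)\,\|(\lambda_1,\xi_1)-(\lambda_2,\xi_2)\|_\mathcal{X}
\end{equation*}
for $\ell = 1, 2$, so the same integration as before gives $\|\mathcal{T}(\lambda_1,\xi_1)-\mathcal{T}(\lambda_2,\xi_2)\|_\mathcal{X} \leq Ct_0^{-\varepsilon}R^{-(a-2s)}\|(\lambda_1,\xi_1)-(\lambda_2,\xi_2)\|_\mathcal{X}$, which is a contraction for $t_0$ sufficiently large. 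Banach's fixed point theorem then yields the unique $(\lambda[\phi], \xi[\phi]) \in \mathcal{B}_*$ solving (\ref{e5:9}); estimates (\ref{e5:12})--(\ref{e5:13}) follow by subtracting the fixed-point identities for $\phi_1, \phi_2$, applying (\ref{e5:104}) in the $\phi$-slot together with its $\Pi_2$-analogue, and absorbing the small factor $Ct_0^{-\varepsilon}R^{-(a-2s)}$ on the left. The main obstacle is the sharp balance $m_r > (1+\sigma)/(n-4s)$: if it failed, the integrating-factor representation would decay too fast (yielding the wrong profile) or carry a logarithmic loss, breaking the self-mapping step; the admissible range for $\sigma$ in (\ref{e4:29}) is tuned precisely to guarantee this inequality, and this is the conceptual heart of the argument.
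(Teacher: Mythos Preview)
Your proposal is correct and follows essentially the same route as the paper: diagonalization via $P$, the integrating-factor representation, the key inequality $m_r>(1+\sigma)/(n-4s)$ secured by (\ref{e4:29}), and a contraction-mapping argument, with (\ref{e5:12})--(\ref{e5:13}) obtained by subtracting fixed-point identities and invoking (\ref{e5:104}). The only cosmetic difference is that the paper runs the fixed point on the derivatives $(\Lambda,\Xi)=(\dot\lambda,\dot\xi-\dot\xi^{\,0})$ (recovering $\lambda=-\int_t^\infty\Lambda$ and splitting off the explicit leading piece $\dot\xi^{\,0}$), while you work directly on $(\lambda,\xi)$ with $\lambda(t_0)=0$ and absorb the leading $\mu_0^{n-4s+2}$ term in $\Pi_2$ via the smallness of $\rho$; both parameterizations are equivalent for the existence and Lipschitz statements.
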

\begin{proof}
Let $h$ be a vector function with $\|h\|_{n+1-4s+\sigma} \lesssim \frac{1}{R^{a-2s}}$. The solution to
\begin{equation}\label{e5:14}
\dot{\lambda}_j + \frac{1}{t}\left(P^Tdiag\left(\frac{\frac{n-2s}{2s}\bar{\sigma}_rb_r^{2-2s} + 1}{n-4s}\right)P\lambda\right)_j = h(t)_j
\end{equation}
can be expressed as
\begin{equation*}
\lambda(t) = P^T\nu(t),\nu(t) = \left(
\begin{matrix}
\nu_1(t)\\
\nu_2(t)\\
\vdots\\
\nu_k(t)
\end{matrix}
\right),
\end{equation*}
\begin{equation}\label{e5:15}
\nu_j(t) = t^{-\frac{1+\frac{n-2s}{2s}\bar{\sigma}_jb_j^{2-2s}}{n-4s}}\left[d_j + \int_{t_0}^t\tau^{\frac{1+\frac{n-2s}{2s}\bar{\sigma}_jb_j^{2-2s}}{n-4s}}(Ph)_j(\tau)d\tau\right],
\end{equation}
where $d_j$, $j = 1, \cdots, k$ are arbitrary constants. Then, for $0\leq d := \max_{i = 1,\cdots, k}|d_i|$, we have
\begin{equation*}
\|t^{\frac{1+\sigma}{n-4s}}\lambda(t)\|_{L^\infty(t_0,\infty)}\lesssim t_0^{-\frac{\bar{\sigma} - \sigma}{n-4s}}d + \|h\|_{n+1-4s+\sigma}
\end{equation*}
and
\begin{equation*}
\|\dot{\lambda}(t)\|_{n+1-4s+\sigma}\lesssim t_0^{-\frac{\bar{\sigma} - \sigma}{n-4s}}d + \|h\|_{n+1-4s+\sigma}.
\end{equation*}

Let $\Lambda(t) = \dot{\lambda}(t)$, then
\begin{equation}\label{e5:17}
\Lambda + \frac{1}{t}\left(P^Tdiag\left(\frac{\frac{n-2s}{2s}\bar{\sigma}_rb_r^{2-2s} + 1}{n-4s}\right)\right)P\int_{t}^\infty\Lambda(s)ds = h(t),
\end{equation}
which defines a linear operator $\mathcal{L}_1: h\to \Lambda$ associating to any $h$ with $\|h\|_{n+1-4s+\sigma}$ bounded the solution $\Lambda$.
$\mathcal{L}_1$ is continuous between the spaces $L^\infty(t_0, \infty)^k$ with the $\|\cdot\|_{n+1-4s+\sigma}$-topology.

For any $h: [t_0,\infty)\to \mathbb{R}^k$ with $\|h\|_{n+1-4s+\sigma}$ bounded, the solution to
\begin{equation}\label{e5:19}
\dot{\xi}_j = \mu_0^{n-4s+2}c\left[b_j^{n-2s}\nabla H(q_j, q_j) - \sum_{i\neq j}b_j^{\frac{n-2s}{2}}b_i^{\frac{n-2s}{2}}\nabla G(q_j, q_i)\right] + h(t)
\end{equation}
is given by
\begin{equation}\label{e5:20}
\xi_j(t) = \xi_j^0(t) + \int_{t}^\infty h(s)ds,
\end{equation}
where
\begin{equation*}
\xi_j^0(t) = q_j + c\left[-b_j^{n-2s}\nabla H(q_j, q_j) + \sum_{i\neq j}b_j^{\frac{n-2s}{2}}b_i^{\frac{n-2s}{2}}\nabla G(q_j, q_i)\right]\int_{t}^\infty\mu_0^{n-4s+2}(s)ds.
\end{equation*}
Then we have
\begin{equation*}
|\xi_j(t) - q_j|\lesssim t^{-\frac{2}{n-4s}} + t^{-\frac{1+\sigma}{n-4s}}\|h\|_{n+1-4s+\sigma}
\end{equation*}
and
\begin{equation*}
\|\dot{\xi}_j - \dot{\xi}_j^0\|_{n+1-4s+\sigma}\lesssim \|h\|_{n+1-4s+\sigma}.
\end{equation*}
Let $\Xi(t) = \dot{\xi}(t) - \dot{\xi}^0$ which is a vector function, then (\ref{e5:20}) defines a linear operator $\mathcal{L}_2: h\to \Xi$ which is continuous in the $\|\cdot\|_{n+1-4s+\sigma}$-topology.

Observe that ($\lambda$, $\xi$) is a solution of (\ref{e5:9}) if ($\Lambda = \dot{\lambda}$, $\Xi = \dot{\xi} - \dot{\xi}^0$) is a fixed point for the problem
\begin{equation}\label{e5:23}
(\Lambda, \Xi) = \mathcal{A}(\Lambda, \Xi)
\end{equation}
where
\begin{equation*}
\mathcal{A}: = \left(\mathcal{L}_1(\hat{\Pi}_1[\Lambda, \Xi, \phi], \mathcal{L}_2(\hat{\Pi}_2[\Lambda, \Xi, \phi])\right) = \left(\bar{A}_1(\Lambda, \Xi), \bar{A}_2(\Lambda, \Xi)\right)
\end{equation*}
with
\begin{equation*}
\hat{\Pi}_1[\Lambda, \Xi, \phi] := \Pi_1\left[\int_{t}^\infty \Lambda, q + \int_{t}^\infty\Xi, \Lambda, \Xi, \phi\right],
\end{equation*}
and
\begin{equation*}
\hat{\Pi}_2[\Lambda\Xi, \phi] := \Pi_2\left[\int_{t}^\infty \Lambda, q + \int_{t}^\infty\Xi, \Lambda, \Xi, \phi\right].
\end{equation*}
Let
\begin{equation*}
K := R^{a-2s}\max\{\|f\|_{n+1-4s+\sigma}, \|f_1\|_{n+1-4s+\sigma},\cdots, \|f_k\|_{n+1-4s+\sigma}\}
\end{equation*}
where $f$, $f_1$, $\cdots$, $f_k$ are defined in Lemma \ref{l5:1} and Lemma \ref{l5:2}. Now, we show that problem (\ref{e5:23}) has a fixed point $(\Lambda, \Xi)$ in the following space
\begin{equation*}
\begin{aligned}
\mathcal{B} = \Big\{(\Lambda, \Xi)\in L^\infty(t_0, \infty)&\times L^\infty(t_0, \infty): \\
&\|\Lambda\|_{n-2s-1+(2s-1)\sigma} + \|\Xi\|_{n-2s-1+(2s-1)\sigma}\leq \frac{cK}{R^{a-2s}}\Big\}
\end{aligned}
\end{equation*}
for suitable $c > 0$.
Indeed, from (\ref{expressionpi1}) we have
\begin{equation*}
\begin{aligned}
&\left|t^{\frac{n+1-4s+\sigma}{n-4s}}\bar{A}_1(\Lambda, \Xi)\right|\\
&\lesssim t_0^{-\frac{\bar{\sigma} - \sigma}{n-4s}}d + \frac{1}{R^{a-2s}}\|\phi\|_{n-2s+\sigma, a} + \frac{K}{R^{a-2s}}\\
&\quad + \frac{t_0^{-\varepsilon}}{R^{a-2s}}\|\Lambda\|_{n+1-4s+\sigma} + \frac{t_0^{-\varepsilon}}{R^{a-2s}}\|\Xi\|_{n+1-4s+\sigma}
\end{aligned}
\end{equation*}
and
\begin{equation*}
\begin{aligned}
\left|t^{\frac{n+1-4s+\sigma}{n-4s}}\bar{A}_2(\Lambda, \Xi)\right|&\lesssim\frac{1}{R^{a-2s}}\|\phi\|_{n-2s+\sigma, a} + \frac{K}{R^{a-2s}}\\
&\quad + \frac{t_0^{-\varepsilon}}{R^{a-2s}}\|\Lambda\|_{n+1-4s+\sigma} + \frac{t_0^{-\varepsilon}}{R^{a-2s}}\|\Xi\|_{n+1-4s+\sigma}.
\end{aligned}
\end{equation*}
Thus, for $d$ satisfying $t_0^{-\frac{\bar{\sigma}-\sigma}{n-4s}}d < \frac{K}{R^{a-2s}}$ and the constant $c$ chosen sufficiently large, $\mathcal{A}(\mathcal{B})\subset \mathcal{B}$. As for the Lipschitz property of $\mathcal{A}$, we have
\begin{equation*}
\begin{aligned}
& t^{\frac{n+1-4s+\sigma}{n-4s}}\left|\bar{A}_1(\Lambda_1, \Xi)-\bar{A}_1(\Lambda_2, \Xi)\right|\\
& = t^{\frac{n+1-4s+\sigma}{n-4s}}\left|\mathcal{L}_1(\hat{\Pi}_1[\Lambda_1, \Xi, \phi]-\hat{\Pi}_1[\Lambda_2, \Xi, \phi])\right|\\
& \leq t^{\frac{n+1-4s+\sigma}{n-4s}}t_0^{-\varepsilon}\left|\mathcal{L}_1(\Theta_2(\Lambda_1, \Xi)-\Theta_2(\Lambda_2, \Xi))\right|\\
&\quad\quad ~~~~+ t^{\frac{n+1-4s+\sigma}{n-4s}}t_0^{-\varepsilon}\left|\mathcal{L}_1(\mu_0^{n-2s-2}\Theta_3(\Lambda_1, \Xi)-\mu_0^{n-2s-2}\Theta_3(\Lambda_2, \Xi))\right|\\
& \leq t_0^{-\varepsilon}\|\Lambda_1 - \Lambda_2\|_{n+1-4s+\sigma}.
\end{aligned}
\end{equation*}
The same estimate holds for $\left|\bar{A}_1(\Lambda, \Xi_1)-\bar{A}_1(\Lambda, \Xi_2)\right|.$ Thus, we have
\begin{equation*}
\|\mathcal{A}(\Lambda_1, \Xi_1) - \mathcal{A}(\Lambda_2, \Xi_2)\|_{n+1-4s+\sigma}\leq t_0^{-\varepsilon}\|\Lambda_1 - \Lambda_2\|_{n+1-4s+\sigma}.
\end{equation*}
Since $t_0^{-\varepsilon} < 1$ when $t_0$ is large enough, $\mathcal{A}$ is a contraction map. Hence, from the Contraction Mapping Theorem, there exists a solution to system (\ref{e5:9}) with $\lambda$, $\xi$ satisfying (\ref{e4:21}) and (\ref{e4:22}) .

To prove (\ref{e5:12}) and (\ref{e5:13}), we observe that $\bar{\lambda} = \lambda[\phi_1] - \lambda[\phi_2]$ and $\bar{\xi} = \xi[\phi_1] - \xi[\phi_2]$ satisfy
\begin{equation*}
\dot{\lambda} + \frac{1}{t}\left(P^Tdiag\left(\frac{\frac{n-2s}{2s}\bar{\sigma}_rb_r^{2-2s} + 1}{n-4s}\right)P\lambda\right) = \bar{\Pi}_1(t),~~\dot{\xi}_j = \bar{\Pi}_{2, j}(t),\quad j = 1,\cdots, k
\end{equation*}
where
\begin{equation*}
\begin{aligned}
&(\bar{\Pi}_1(t))_j\\
&= cp\mu_j^{\frac{n-2s}{2}}\mu_{0j}^{1-2s}\int_{B_{2R}}U^{p-1}\left(\frac{\mu_{0j}}{\mu_j}y\right)\left[\psi[\phi_1] - \psi[\phi_2]\right](\xi_j+\mu_{0j}y, t)Z_{n+1}(y)dy\\
&\quad + c\left(\frac{\mu_{j}}{\mu_{0j}}\right)^{\frac{n+2s}{2}}\mu_{0j}^{1-2s}\int_{B_{2R}}\Big[B_j[(\phi_1)_j] - B_j[(\phi_2)_j]\Big]Z_{n+1}(y)dy\\
&\quad + c\left(\frac{\mu_{j}}{\mu_{0j}}\right)^{\frac{n+2s}{2}}\mu_{0j}^{1-2s}\int_{B_{2R}}\Big[B_j^0[(\phi_1)_j] - B_j^0[(\phi_2)_j]\Big]Z_{n+1}(y)dy
\end{aligned}
\end{equation*}
and
\begin{equation*}
\begin{aligned}
&(\bar{\Pi}_1(t))_j\\
&= cp\mu_j^{\frac{n-2s}{2}}\mu_{0j}^{1-2s}\int_{B_{2R}}U^{p-1}\left(\frac{\mu_{0j}}{\mu_j}y\right)\left[\psi[\phi_1] - \psi[\phi_2]\right](\xi_j+\mu_{0j}y, t)\frac{\partial U}{\partial y_j}(y)dy\\
&\quad + c\left(\frac{\mu_{j}}{\mu_{0j}}\right)^{\frac{n+2s}{2}}\mu_{0j}^{1-2s}\int_{B_{2R}}\Big[B_j[(\phi_1)_j] - B_j[(\phi_2)_j]\Big]\frac{\partial U}{\partial y_j}(y)dy\\
&\quad + c\left(\frac{\mu_{j}}{\mu_{0j}}\right)^{\frac{n+2s}{2}}\mu_{0j}^{1-2s}\int_{B_{2R}}\Big[B_j^0[(\phi_1)_j] - B_j^0[(\phi_2)_j]\Big]\frac{\partial U}{\partial y_j}(y)dy.
\end{aligned}
\end{equation*}
Then (\ref{e5:12}) and (\ref{e5:13}) follow from (\ref{e5:104}). This completes the proof.
\end{proof}

\section{Gluing: Proof of Theorem \ref{t:main}}
After we have chosen parameters $\lambda = \lambda[\phi]$ and $\xi = \xi[\phi]$ such that the orthogonality conditions (\ref{e5:7}) hold, we only need to solve problem (\ref{e5:4}) in the class of functions with $\|\phi\|_{a,\nu}$ (or equivalently $\|\phi\|_{n-2s+\sigma,a}$) bounded. With the chosen parameters, we can apply Proposition 5.1 which states that there exists a linear operator $\mathcal{T}$ associating any function $h(y,\tau)$ with $\|h\|_{2s+a, \nu}$-bounded the solution to (\ref{e5:31}). Thus problem (\ref{e5:4}) is reduced to a fixed point problem
\begin{equation}
\phi = (\phi_1,\cdots, \phi_k) = \mathcal{A}(\phi): = (\mathcal{T}(H_1[\lambda,\xi,\dot{\lambda},\dot{\xi},\phi]), \cdots, \mathcal{T}(H_k[\lambda,\xi,\dot{\lambda},\dot{\xi},\phi])).
\end{equation}
We claim that, for each $j = 1,\cdots, k$, there hold
\begin{equation}\label{e6:1}
\begin{aligned}
&(1+|y|^\eta)\left[H[\lambda,\xi,\dot{\lambda},\dot{\xi},\phi](\cdot, t)\right]_{\eta, B_{1}(0)}\chi_{B_{2R}(0)}(y) + \left|H[\lambda,\xi,\dot{\lambda},\dot{\xi},\phi](y, t)\right|\\
&\quad\quad\quad\quad\quad\quad\quad\quad\quad\quad\quad\quad\quad\quad\quad\quad\quad\quad\quad\quad\quad\quad\quad\quad\quad\lesssim t_0^{-\varepsilon}\frac{\mu_0^{n-2s+\sigma}}{1+|y|^{2s+a}}
\end{aligned}
\end{equation}
and
\begin{equation}\label{e6:2}
\begin{aligned}
&(1+|y|^\eta)\left[H[\phi^{(1)}](\cdot, t)-H[\phi^{(2)}](\cdot, t)\right]_{\eta, B_{1}(0)}\chi_{B_{2R}(0)}(y) + \left|H[\phi^{(1)}]-H[\phi^{(2)}]\right|(y, t)\\
&\quad\quad\quad\quad\quad\quad\quad\quad\quad\quad\quad\quad\quad\quad\quad\quad\quad\quad\quad\quad\quad\quad\quad\lesssim t_0^{-\varepsilon}\|\phi^{(1)} - \phi^{(2)}\|_{n-2s+\sigma, a}.
\end{aligned}
\end{equation}
From (\ref{e6:1}) and (\ref{e6:2}), $\mathcal{A}$ has a fixed point $\phi$ within the set of functions $\|\phi\|_{n-2s+\sigma, a}\leq ct_0^{-\varepsilon}$ for some large positive constant $c$. This proves the existence part of Theorem \ref{t:main}.

Estimate (\ref{e6:1}) is obtained from the definition of $H_j$, Lemma \ref{l2.2} and (\ref{e4:27}).
As for (\ref{e6:2}), from (\ref{e5:12}) and (\ref{e5:13}), we have
\begin{equation*}
\begin{aligned}
&\mu_{0j}^{\frac{n+2s}{2}}\left|S_{\mu_1, \xi_1, j}(\xi_{j,1} + \mu_{0j}y, t) - S_{\mu_2, \xi_2, j}(\xi_{j,2} + \mu_{0j}y, t)\right|\\
&\quad\quad\quad\quad\quad\quad\quad\quad\quad\quad\quad\quad\quad\quad\quad\quad\quad\quad\lesssim t_0^{-\varepsilon}\frac{\mu_0^{n-2s+\sigma}(t)}{1+|y|^{2s+a}}\|\phi^{(1)} - \phi^{(2)}\|_{n-2s+\sigma,a}
\end{aligned}
\end{equation*}
where
\begin{equation*}
\mu_i = \mu[\phi^{(i)}],\quad \xi_i = \xi[\phi^{(i)}],\quad \xi_{j, i} = \xi_j[\phi^{(i)}],\quad i = 1, 2.
\end{equation*}
By Proposition \ref{p4:4.2}, it holds that
\begin{eqnarray*}
\begin{aligned}
& p\mu_{0j}^{\frac{n-2s}{2}}\Bigg|\frac{\mu_{0j}^{2s}}{\mu_{j,1}^{2s}}U^{p-1}\left(\frac{\mu_{0j}}{\mu_{j,1}}y\right)\psi[\phi^{(1)}](\xi_{j,1} + \mu_{0j}y, t)\\
&\quad\quad\quad\quad\quad\quad\quad\quad\quad\quad\quad\quad\quad\quad\quad\quad -\frac{\mu_{0j}^{2s}}{\mu_{j,2}^{2s}}U^{p-1}\left(\frac{\mu_{0j}}{\mu_{j,2}}y\right)\psi[\phi^{(2)}](\xi_{j,2} + \mu_{0j}y, t)\Bigg|\\
&\quad\quad\quad\quad\quad\quad\quad\quad\quad\quad\quad\quad\quad\quad\quad\quad\quad\quad\lesssim t_0^{-\varepsilon}\frac{\mu_0^{n-2s+\sigma}(t)}{1+|y|^{2s+a}}\|\phi^{(1)} - \phi^{(2)}\|_{n-2s+\sigma,a}
\end{aligned}
\end{eqnarray*}
where
\begin{equation*}
\mu_{j,i} = \mu_j[\phi^{(i)}],\quad \psi[\phi^{(i)}] = \Psi[\lambda_i, \xi_i, \dot{\lambda}_i, \dot{\xi}_i, \phi^{(i)}],\quad i = 1, 2.
\end{equation*}
Finally, from the definitions \eqref{e3:11} and  \eqref{e3:12} in Section 3,
\begin{equation*}
\left|B_j[\phi^{(1)}_j]-B_j[\phi^{(2)}_j]\right|\lesssim t_0^{-\varepsilon}\frac{\mu_0^{n-2s+\sigma}(t)}{1+|y|^{2s+a}}\|\phi^{(1)} - \phi^{(2)}\|_{n-2s+\sigma,a}
\end{equation*}
and
\begin{equation*}
\left|B_j^0[\phi^{(1)}_j]-B_j^0[\phi^{(2)}_j]\right|\lesssim t_0^{-\varepsilon}\frac{\mu_0^{n-2s+\sigma}(t)}{1+|y|^{2s+a}}\|\phi^{(1)} - \phi^{(2)}\|_{n-2s+\sigma,a}
\end{equation*}
hold. This proves the estimate (\ref{e6:2}).

The stability part of Theorem \ref{t:main} is the same as \cite{cortazar2016green}, so we omit it.\qed

\section*{Acknowledgements}
J. Wei is partially supported by NSERC of Canada, Y. Zheng is partially supported by NSF of China
(11301374) and China Scholarship Council (CSC).


\begin{thebibliography}{10}

\bibitem{BG}
Agnid Banerjee and Nicola Garofalo,
\newblock Monotonicity of generalized frequencies and the strong unique continuation property for fractional parabolic equations, 
\newblock {\it Preprint}, 2017.



\bibitem{Barriosvaldinoci2014armawidder}
Begona Barrios, Ireneo Peral, Fernando Soria and Enrico Valdinoci,
\newblock A {W}idder's type theorem for the heat equation with nonlocal
  diffusion,
\newblock {\it Arch. Ration. Mech. Anal.}, 213(2):629--650, 2014.

\bibitem{BogdanTomaszRyznar2010}
Krzysztof Bogdan, Tomasz Grzywny and Michal Ryznar,
\newblock Heat kernel estimates for the fractional {L}aplacian with {D}irichlet
  conditions,
\newblock {\it Ann. Probab.}, 38(5):1901--1923, 2010.

\bibitem{BSV}
Matteo Bonforte, Yannick Sire and  Juan-Luis Vazquez, 
\newblock 
Optimal existence and uniqueness theory for the fractional heat equation,
\newblock {\it  Nonlinear Anal. 153, 142?168}, 2017.  

\bibitem{CabreSire2014Nonlinear}
Xavier Cabr\'e and Yannick Sire,
\newblock Nonlinear equations for fractional {L}aplacians, {I}: {R}egularity,
  maximum principles, and {H}amiltonian estimates,
\newblock {\it Ann. Inst. H. Poincar\'e Anal. Non Lin\'eaire}, 31(1):23--53,
  2014.

\bibitem{cabresire2015tamsnonlinear}
Xavier Cabr\'e and Yannick Sire,
\newblock Nonlinear equations for fractional {L}aplacians {II}: {E}xistence,
  uniqueness, and qualitative properties of solutions,
\newblock {\it Trans. Amer. Math. Soc.}, 367(2):911--941, 2015.

\bibitem{CabreRoquejoffreCMP}
Xavier Cabr\'e and Jean-Michel Roquejoffre,
\newblock The influence of fractional diffusion in {F}isher-{KPP} equations,
\newblock {\it Comm. Math. Phys.}, 320(3):679--722, 2013.

\bibitem{caffarellichanvasseurJams2011}
Luis Caffarelli, Chi~Hin Chan and Alexis Vasseur,
\newblock Regularity theory for parabolic nonlinear integral operators,
\newblock {\it J. Amer. Math. Soc.}, 24(3):849--869, 2011.

\bibitem{caffrarellifigallijram2013}
Luis Caffarelli and Alessio Figalli,
\newblock Regularity of solutions to the parabolic fractional obstacle problem,
\newblock {\it J. Reine Angew. Math.}, 680:191--233, 2013.

\bibitem{caffarellisoriavazquezjems2013}
Luis Caffarelli, Fernando Soria and Juan~Luis V\'azquez,
\newblock Regularity of solutions of the fractional porous medium flow,
\newblock {\it J. Eur. Math. Soc. (JEMS)}, 15(5):1701--1746, 2013.

\bibitem{caffarellivazquesarma2011}
Luis Caffarelli and Juan~Luis Vazquez,
\newblock Nonlinear porous medium flow with fractional potential pressure,
\newblock {\it Arch. Ration. Mech. Anal.}, 202(2):537--565, 2011.

\bibitem{caffarellivasseur2010annals}
Luis~A. Caffarelli and Alexis Vasseur,
\newblock Drift diffusion equations with fractional diffusion and the
  quasi-geostrophic equation,
\newblock {\it Ann. of Math. (2)}, 171(3):1903--1930, 2010.

\bibitem{chenwei2018}
Guoyuan Chen, Juncheng Wei and Yifu Zhou,
\newblock Finite time blow-up for fractional heat flow with critical exponent in $\mathbb{R}^4$,
\newblock {\it in preparation}.

\bibitem{ChenLiOucpam2006classification}
Wenxiong Chen, Congming Li and Biao Ou,
\newblock Classification of solutions for an integral equation,
\newblock {\it Comm. Pure Appl. Math.}, 59(3):330--343, 2006.

\bibitem{chenkimsongjems2010}
Zhen-Qing Chen, Panki Kim and Renming Song,
\newblock Heat kernel estimates for the {D}irichlet fractional {L}aplacian,
\newblock {\it J. Eur. Math. Soc. (JEMS)}, 12(5):1307--1329, 2010.

\bibitem{choikimjfa2014asymtotic}
Woocheol Choi, Seunghyeok Kim and Ki-Ahm Lee,
\newblock Asymptotic behavior of solutions for nonlinear elliptic problems with
  the fractional {L}aplacian,
\newblock {\it J. Funct. Anal.}, 266(11):6531--6598, 2014.

\bibitem{cortazar2016green}
Carmen Cortazar, Manuel del Pino and Monica Musso,
\newblock Green's function and infinite-time bubbling in the critical nonlinear
  heat equation,
\newblock {\it J. Eur. Math. Soc. (JEMS)}, to appear.


 \bibitem{DDDV} Juan D{\'a}vila, Manuel del Pino, S. Dipierro and E. Valdinoci,
 \newblock  Concentration phenomena for the nonlocal Schr\"odinger equation with Dirichlet datum,
 \newblock {\it  Anal. PDE}, 8 (2015), no. 5, 1165-1235.

\bibitem{DelPinoSirePAMS}
Juan D{\'a}vila, Manuel del Pino and Yannick Sire,
\newblock Nondegeneracy of the bubble in the critical case for nonlocal
  equations,
\newblock {\it Proc. Amer. Math. Soc.}, 141(11):3865--3870, 2013.

\bibitem{davila2017singularity}
Juan D{\'a}vila, Manuel del Pino and Juncheng Wei,
\newblock Singularity formation for the two-dimensional harmonic map flow into
  ${S}^2$,
\newblock {\it arXiv:1702.05801}, 2017.

\bibitem{del2012type}
Manuel del Pino, Panagiota Daskalopoulos and Natasa Sesum,
\newblock Type II ancient compact solutions to the Yamabe flow,
\newblock {\it J. Reine Angew. Math.}, to appear.

\bibitem{dkw2007concentration}
Manuel del Pino, Michal Kowalczyk and Juncheng Wei,
\newblock Concentration on curves for nonlinear {S}chr\"odinger equations,
\newblock {\it Comm. Pure Appl. Math.}, 60(1):113--146, 2007.

\bibitem{delwei2011Degiorgi}
Manuel del Pino, Michal Kowalczyk and Juncheng Wei,
\newblock On {D}e {G}iorgi's conjecture in dimension {$N\geq 9$},
\newblock {\it Ann. of Math. (2)}, 174(3):1485--1569, 2011.

\bibitem{delkowalczykweijdg2013entire}
Manuel del Pino, Michal Kowalczyk and Juncheng Wei,
\newblock Entire solutions of the {A}llen-{C}ahn equation and complete embedded
  minimal surfaces of finite total curvature in {$\Bbb R^3$},
\newblock {\it J. Differential Geom.}, 93(1):67--131, 2013.

\bibitem{delmussoweitype2}
Manuel del Pino, Monica Musso and Juncheng Wei,
\newblock Geometry driven Type II higher dimensional blow-up for the critical heat equation,
\newblock {\it arXiv:1710.11461}, 2017.

\bibitem{delmussowei3d}
Manuel del Pino, Monica Musso and Juncheng Wei,
\newblock Infinite time blow-up for the 3-dimensional energy critical heat equation,
\newblock {\it arXiv:1705.01672}, 2017.


\bibitem{DPV} Eleonora Di~Nezza, Giampiero Palatucci and Enrico Valdinoci,
\newblock Hitchhiker's guide to the fractional {S}obolev spaces,
\newblock {\it Bull. Sci. Math.}, 136(5):521--573, 2012.

\bibitem{felsingerkassmancpde2013}
Matthieu Felsinger and Moritz Kassmann,
\newblock Local regularity for parabolic nonlocal operators,
\newblock {\it Comm. Partial Differential Equations}, 38(9):1539--1573, 2013.

\bibitem{fernrosoton2016}
Xavier Fern\'andez-Real and Xavier Ros-Oton,
\newblock Boundary regularity for the fractional heat equation,
\newblock {\it Rev. R. Acad. Cienc. Exactas Fis. Nat. Ser. A Math. RACSAM},
  110(1):49--64, 2016.

\bibitem{FrankLenzmannSilvestre2016}
Rupert~L. Frank, Enno Lenzmann and Luis Silvestre,
\newblock Uniqueness of radial solutions for the fractional {L}aplacian,
\newblock {\it Comm. Pure Appl. Math.}, 69(9):1671--1726, 2016.

\bibitem{fujitajfsuts1966blowup}
Hiroshi Fujita,
\newblock On the blowing up of solutions of the {C}auchy problem for
  {$u_{t}=\Delta u+u^{1+\alpha }$},
\newblock {\it J. Fac. Sci. Univ. Tokyo Sect. I}, 13:109--124 (1966), 1966.

\bibitem{GK}
 Yoshikazu Giga and Robert V. Kohn,
 \newblock Asymptotically self-similar blow-up of semilinear heat equations,
\newblock {\it  Comm. Pure Appl. Math. 38, 297-319}, (1985). 

\bibitem{ishige}
Kotaro Hisa and Kazuhiro Ishige
\newblock Existence of solutions for a fractional semilinear
parabolic equation with singular initial data,
\newblock {\it Preprint}. 

\bibitem{jinxiongjram2014}
Tianling Jin and Jingang Xiong,
\newblock A fractional {Y}amabe flow and some applications,
\newblock {\it J. Reine Angew. Math.}, 696:187--223, 2014.

\bibitem{Lijems2004remarkconformally}
Yan~Yan Li,
\newblock Remark on some conformally invariant integral equations: the method
  of moving spheres,
\newblock {\it J. Eur. Math. Soc. (JEMS)}, 6(2):153--180, 2004.

\bibitem{matanomerlejfa2011threshold}
Hiroshi Matano and Frank Merle,
\newblock Threshold and generic type {I} behaviors for a supercritical
  nonlinear heat equation,
\newblock {\it J. Funct. Anal.}, 261(3):716--748, 2011.

\bibitem{merlezaagduke1997stability}
Frank Merle and Hatem Zaag,
\newblock Stability of the blow-up profile for equations of the type
  {$u_t=\Delta u+|u|^{p-1}u$},
\newblock {\it Duke Math. J.}, 86(1):143--195, 1997.

\bibitem{quittnersouplet2007superlinear}
Pavol Quittner and Philippe Souplet,
\newblock {\it Superlinear parabolic problems},
\newblock Birkh\"auser Verlag, Basel, 2007.

\bibitem{schweyerjfa2012typeii}
R\'emi Schweyer,
\newblock Type {II} blow-up for the four dimensional energy critical semi
  linear heat equation,
\newblock {\it J. Funct. Anal.}, 263(12):3922--3983, 2012.

\bibitem{servadeivaldinoci2015brezis}
Raffaella Servadei and Enrico Valdinoci,
\newblock The {B}rezis-{N}irenberg result for the fractional {L}aplacian,
\newblock {\it Trans. Amer. Math. Soc.}, 367(1):67--102, 2015.

\bibitem{silvestreium2012differentiability}
Luis Silvestre,
\newblock H\"older estimates for advection fractional-diffusion equations,
\newblock {\it Ann. Sc. Norm. Super. Pisa Cl. Sci. (5)}, 11(4):843--855, 2012.

\bibitem{silvestreium2012}
Luis Silvestre,
\newblock On the differentiability of the solution to an equation with drift
  and fractional diffusion,
\newblock {\it Indiana Univ. Math. J.}, 61(2):557--584, 2012.

\bibitem{sireweizhenghalf}
Yannick Sire, Juncheng Wei and Youquan Zheng,
\newblock Infinite time blow-up for half-harmonic map flow from $\mathbb{R}$ into $\mathbb{S}^1$,
\newblock {\it arXiv:1711.05387}, 2017.

\bibitem{sugitani}
S. Sugitani, 
\newblock On nonexistence of global solutions for some nonlinear integral equations,
\newblock {\it Osaka J. Math. 12, 45-51} (1975). 
\end{thebibliography}
\end{document}